\keywords{classical logic  ; $\lm$-calculus  ; standardization   ; length of reduction sequence}
\newcommand {\m}{\mu}
\newcommand{\ma}{\mathcal}
\newcommand{\lmr}{\l \mu \rho}
\newcommand {\lmp}{\l \mu \mu'}
\newcommand{\lmrt}{\l \mu \rho \th}
\newcommand{\bmrt}{\b \mu \rho \th}
\def\a{\alpha}
\def \b{\beta}
\def\g{\gamma}
\def\l{\lambda}
\def\r{\rho}
\def\th{\theta}
\def\lm{\l \mu}
\def\lmp{\l \mu \mu'}
\def\up{\overline}
\def\ora{\overrightarrow}
\def\tx{\textrm}
\def\ra{\rightarrow}
\def\tw{\textrm{tower}}
\def\twe{\emph{tower}}
\def\lan{\langle}
\def\ran{\rangle}
\def\si{\sigma}
\def\r{\rho}
\def\fl{\mapsto}
\def\up{\overline}
\def\tx{\textrm}
\def\v{\vdash}
\def\G{\Gamma}
\def\D{\triangle}
\def\F{\displaystyle\frac}
\def\bdf{\begin{defi}}
\def\edf{\end{defi}}
\def\be{\begin{enumerate}}
\def\ee{\end{enumerate}}
\def\bd{\begin{description}}
\def\ed{\end{description}}
\def\ite{\begin{itemize}}
\def\ete{\end{itemize}}
\def\bp{\begin{proof}}
\def\ep{\end{proof}}
\def\bl{\begin{lem}}
\def\el{\end{lem}}
\def\bt{\begin{thm}}
\def\et{\end{thm}}
\def\bpt{\begin{prooftree}}
\def\ept{\end{prooftree}}
\def\bdf{\begin{defi}}
\def\edf{\end{defi}}
\begin{document}

\title[An estimation for the lengths of reduction sequences of the $\lambda\mu\rho\theta$-calculus]
{An estimation for the lengths of reduction sequences of the $\lambda\mu\rho\theta$-calculus}

\author[P.~Batty\'anyi]{P\'eter Batty\'anyi}	%required
\address{Department of Computer Science, Faculty of Informatics,
University of Debrecen, Kassai \'ut 26, 4028 Debrecen, Hungary}	%required
\email{battyanyi.peter@inf.unideb.hu}  %optional
%\thanks{thanks 1, optional.}	%optional

\author[K.~Nour]{Karim Nour}	%optional
\address{Univ. Grenoble Alpes,
Univ. Savoie Mont Blanc,
CNRS, LAMA, 73000 Chamb\'ery, France}	%optional
\email{karim.nour@univ-smb.fr}  %optional
%\thanks{thanks 2, optional.}	%optional

\begin{abstract}
Since it was realized that the Curry-Howard isomorphism can be extended to the case of classical logic as well,
several calculi have appeared as candidates for the encodings of proofs in classical logic. One of the most
extensively studied among them is the $\lm$-calculus of Parigot \cite{Par1}. In this paper, based on the result of Xi presented for the $\l$-calculus
\cite{Xi}, we give an upper bound for the lengths of the reduction sequences in the $\lm$-calculus extended with the
$\rho$- and $\theta$-rules. Surprisingly, our results show that the new terms and the new rules do not add to the computational complexity
of the calculus despite the fact that $\m$-abstraction is able to consume an unbounded number of arguments by virtue of the $\m$-rule.
\end{abstract}

\maketitle

\section{Introduction}

\subsection{The Curry-Howard isomorphism for classical logic}

In the early nineties it was realized that the Curry-Howard isomorphism can be extended to the case of classical logic
\cite{Gri,Mur}. Since then several calculi have appeared aiming to give an encoding of proofs formulated
either in classical natural deduction or in classical sequent calculus \cite{Ber-Bar,Cur-Her,Par1,Reh}.

A noteworthy example of a calculus establishing a correspondence between terms and natural deduction proofs is the $\lm$-calculus presented
by Parigot \cite{Par5}, which stands very close in nature to the $\l$-calculus itself.
Besides the usual variables a new type of variables is introduced, the so-called classical- or $\m$-variables.
The calculus enriched with the $\m$-variables is capable of representing proofs in classical natural deduction
by terms via the Curry-Howard isomorphism. The reduction rules corresponding to the new $\l\m$-terms are
defined in \cite{Par1}. In addition, further simplification rules, for example the $\rho$- and $\theta$-rules,
and the symmetric counterpart of the $\m$-rule,
which is the $\m'$-rule, were defined for the $\l\m$-calculus \cite{Par1,Par2}. The motivation
for introducing the $\m'$-rule, and the simplification rules $\rho$ and $\theta$, was the following.
In the typed $\l$-calculus we are able to define integers by Church's numerals and other data types in the usual manner \cite{Kri1}.
For the Church numerals and the data types the unicity of representation of data holds. This means that, if we talk
about the Church numerals only, every term of type $N$, where $N$ is a type of a Church numeral, is $\b$-equal to a Church numeral.
This is no more true for the $\lm$-calculus: we can find normal terms of type $N$ that are not Church numerals. The problem is
resolved by introducing a symmetric equivalent of the $\m$-rule and some more reduction rules, namely the $\rho$- and the $\th$-rules
\cite{Par2}. We should remark that the price for adding more rules to the calculus other than the $\b$- and $\m$-rules was the disappearance of the usual proof
theoretical properties, like confluence, in case of the $\lm\m'$-calculus. Even strong normalization is lost, when we
consider the symmetric $\lm$-calculus together with the $\rho$-rule \cite{Batt}. Parigot has already showed in \cite{Par3} that
the $\lm$-calculus, i.e. when we consider the $\b$- and the $\mu$-rules only, is strongly normalizing: his proof was based
 on the Tait-Girard reducibility method \cite{Tai}. An arithmetical proof of the same result was presented by David and Nour \cite{Dav-Nou1}.

\subsection{The work of Xi}

Prior to presenting the work of Xi, we give an example which shows that the length of a
reduction sequence can be exponential in terms of the complexity of a term even in the case of the simply typed $\l$-calculus.
We define a sequence of simple types with recursion. Let $N_0 = (X \ra X) \ra (X \ra X)$, where $X$ is a type variable and,
for every $i \in \mathbb{N}$, $N_{i+1} = (N_i \ra N_i) \ra (N_i \ra N_i)$. For every $n \in \mathbb{N}$ we denote by $\underline{n}$ the
$n$-th Church numeral. Let $n_1,...,n_m \in \mathbb{N}$ and $P =(\underline{n_m} \;(... \; ( \underline{n_{3}} \;\;
(\underline{n_{2}}\; \; \underline{n_1})) \; ...))$.
It is easy to check that $P$ is of type $N_{m-1} \ra N_{m-1}$ and reduces to the Church numeral
$\underline{{n_m}^{.^{.^{.^{{\scriptstyle n_{2}}^{ {{\scriptstyle n_{1}}}}}}}} }$
in ${n_m}^{.^{.^{.^{{\scriptstyle n_{2}}^{ {{\scriptstyle n_{1}}}}}}}}$ steps of $\b$-reductions.

In his paper \cite{Xi}, Xi obtains an upper bound for the lengths of reduction sequences of the simply typed $\l$-calculus.
First of all, he finds a bound for the leftmost reduction sequences of the $\l $I-calculus. Since any reduction sequence of
a $\l $I-term is at most as long as the leftmost one, he has immediately found a bound for the $\l$I-calculus. Next, he maps the
set of $\l$-terms into the set of $\l$I-terms such that, for any reduction sequence of a $\l$-term, he can find a reduction
sequence of the corresponding $\l$I-term which is at least as long. As our starting example shows, it is inevitable that
this bound will be exponential in relation to the complexity (the number of symbols) of the term and/or the rank (the number of arrows)
of the type of the term.
In our treatment we chose to develop Xi's method further, since, when aiming to find the terms with longest reduction sequences,
$\l\m $I-terms appeared to be promising candidates and, among their reduction sequences, the standard ones turned out to be the ones
with longest reduction paths.

An improvement of Xi's method appeared in the work of Asperti and L\'evy \cite{AL}. They gave a refinement of Xi's result
which proved to be a considerable strengthening of the bound especially in the cases when the computation leads to normal forms
of a sufficiently simple structure (e.g. a variable or booleans). They showed that in these cases the length of the longest reduction sequence
is at most a factorial of that of the shortest one.

\subsection{The motivations of our work}

First of all, we note that the motivation for the introduction of the $\l\m$-calculus was not the enhancement of the expressive power of
the $\l$-calculus but rather the need for representing program constructs, like $exit$, $call/CC$, which were missing from the simply
typed $\l$-calculus. The power of the new calculus stems from the fact that a $\m$-abstraction can consume any number of
arguments through subsequent reductions.

Interestingly, our results show that the upper bound for the number of reduction steps of  a term in the $\l\m$-calculus stays close to
the expression giving an upper bound for the $\l $-calculus. One expects that this bound should increase in comparison with the $\l$-calculus,
since the $\l\m$-calculus properly contains the $\l$-calculus.

Intuitively, one should try to simulate the $\l\m$-calculus with the help of the $\l$-calculus and hence obtain a bound
for the lengths of the reduction sequences. This type of simulation has two defects: first of all, the upper bound for the lengths
would grow considerably and, secondly, additional difficulties would come up when we want to simulate the simplification rules
(for a detailed explanation see Section \ref{final:section}).
Hence, in spite of the expected difficulties with respect to the handling of critical pairs and, consequently, with relation
to the choice of the definition of standard reduction sequences, we have voted for the adaptation of Xi's method.

There are several works concerning the standardization of the $\l\m$-calculus \cite{Batt,Dav-Nou2,Py,Sau}.
David and Nour \cite{Dav-Nou2} consider standardization of the $\l\m$-calculus without simplification rules, while Py \cite{Py}
chooses simplification rules by which the confluence is retained in the extended calculi. This eliminates the difficulties imposed
on the treatment of the critical pairs. A different aspect is that of Saurin \cite{Sau}. He works with essentially the same calculus:
basically, he considers the rules $\b$, $\m$, $\rho$ and $\theta$, which are sufficient to obtain confluency in the calculus.
In his calculus, he also applies some other- local and global- simplification rules besides the already mentioned ones.
In spite of the complexity of this calculus, Saurin succeeds in defining the notion of a standard reduction sequence and he
also obtains a standardization theorem. His definition follows the traditional way: he uses the notion of residuals
(one cannot reduce the residual of a redex lying to the left of a redex reduced). This results in a rather technical proof.
However, none of the works mentioned contain estimations for the lengths of
standard reduction sequences. Instead of the traditional way, we chose to define the notion of a standard reduction sequence
following the style of David \cite{Dav-Nou2} so that the proofs can be carried out by recursion on the lengths of the reduction sequences.

\subsection{Outline of the present work}

In this paper, following the reasoning of Xi \cite{Xi} for the simply typed $\l$-calculus, we present an upper bound for the lengths of the
reduction sequences in the $\l\m\rho\theta$-calculus in terms of the complexity and the rank of the term $M$, where
the rank of $M$ is the maximum of its redex ranks and the complexity of a term is the number of symbols in $M$. We base our treatment
on \cite{Batt}. First we prove a standardization theorem for the $\l\m$-calculus with the additional assertion that, in the case of the
$\l\m $I-calculus the length of a reduction sequence is majorized by that of its standardization, which is, in turn, bounded from above
 by a certain measure defined in the paper.
In addition, we show that, if $M$ is a $\l\m$I-term, then a standard reduction sequence leading to the normal form of $M$
is the leftmost reduction sequence, which is necessarily unique. This fits our intuition, as a matter of fact. Thus it makes no difference
 in which way we are able to find the standard reduction sequence leading to the normal form of $M$: it is by all means the longest reduction
sequence normalizing $M$.
Thus, as our first task, we find an appropriate, normalizing reduction sequence for $\l\m$I-terms such that its standardization provides
us with a measure which is a super-exponential number theoretic function and an upper bound for the length of the standard reduction sequence.
This is accomplished in Section \ref{section:3}. Hence, our strategy for a general term $M$ is to define a translation $[\![M]\!]_k$ of $M$ into the $\l\m$I-calculus
such that the longest reduction sequence of $M$ is not longer than the longest reduction sequence of $[\![M]\!]_k$. Since we have already obtained
a bound for the reduction sequences of $[\![M]\!]_k$, we also have one for those of $M$.

\subsection{Some difficulties and our proposed solutions}

The new redexes, and especially the critical pairs, impose additional difficulties: we reformulated the notion of a standard reduction
sequence in the $\lm\rho\theta$-calculus in line with the definition of standard $\beta$-reduction sequences according to David \cite{Dav}.
The resulting notion proved to be vastly different in appearance from that for the $\l$-calculus. With the presence
of the new rules overlapping redexes can occur: performing a $\th$-redex can make a $\m$-redex to vanish and executing a $\th$-redex can make
a $\rho$-redex to disappear and vice versa. Our definition of a standard reduction sequence excludes these situations: overlapping $\th$- and
$\mu$-redexes are always considered $\mu$-redexes. Likewise, in the case overlapping $\th$- and $\rho$-redexes, a $\th$-redex in not allowed to
destroy the containing $\rho$-redex and vice versa. We show that our choice is appropriate: we can always find standard reduction sequences
respecting these constraints. Due to the presence of other reduction rules, finding the bound and proving that the lengths of the
reduction sequences obey that bound is more difficult even in the case of $\lm$I-terms. Instead of treating as candidates every possible
$\l \m$I-sequences of reductions for the estimation like Xi does for the $\l$I-calculus, we compute the bound only by starting from a special
kind of $\l\m$I-reduction sequence which we call good $k$-normalization sequence.
We evaluate the bound for the general case by assigning a $\lm$I-term $[\![M]\!]_k$ with a certain $k$ to the $\lm$-term $M$ such
that the length of the longest reduction paths of $[\![M]\!]_k$ is greater than or equal to that of $M$. Concerning the general case,
our transformation of $M$ into $[\![M]\!]_k$ is in fact a correction of Xi's argument \cite{Xi}, the original idea of Xi contained
a slight impreciseness. Namely, when $M$ is a $\l$-abstraction, we have to apply case distinction deciding whether $M$ is the left
hand side of a $\b$-redex or not. Otherwise the size of the corresponding $\l \m$I-term, $[\![M]\!]_k$, could not be estimated correctly.

As a general remark, we can observe that the bound obtained for the $\l\m$-calculus is exactly the same as the one obtained by Xi \cite{Xi}
for the $\l$-calculus, except for the fact that the ranks of the redexes and hence the measures of the reduction sequences are generalizations
of the corresponding notions for the $\l$-calculus. Perhaps, this surprising result could be interpreted as an informal statement saying that
the computational complexity of the $\lm$-calculus has not been increased by introducing the classical variables in the $\l$-calculus.

\section{The \texorpdfstring{$\lmrt$}{lambda-mu-rho-theta}-calculus}

\subsection{The syntax of calculus}

The $\lmrt$-calculus was introduced by Parigot \cite{Par1}.
Instead of his original calculus, we use a modified version owing to de Groote
\cite{de Gro1}: we apply the term formation rules in a more flexible way, that is,
we do not assume that a $\m$-abstraction $(\m\a. M)$ must be of the form $(\m\a. ([\b]M_1))$ for some $\b$ and $M_1$.
In what follows, we give the appropriate definitions.

\begin{defi}[Terms] \label{int:termsdegroote}
\hfill\begin{enumerate}
\item There are two kinds of variables : the set of $\l$-variables $\mathcal{V} = \{ x,y,z,\dots\}$ and the set of $\mu$-variables
$\mathcal{W} = \{\a,\b,\g,\dots\}$. The set of terms is denoted by $\mathcal{T}$ and the term formation rules are:
\[\mathcal{T} \; := \; \mathcal{V} \; \mid \; ({\l}\mathcal{V}.\mathcal{T}) \;
\mid \; (\m \mathcal{W}. \mathcal{T}) \; \mid \; ([\mathcal{W}] \mathcal{T}) \;
\mid \; (\mathcal{T}\;\mathcal{T}).\]

\item The complexity of a term is defined as follows:
\begin{itemize}
\item $comp(x)=1$,
\item $comp([\a] M)= comp(\l x.M) = comp(\mu \a. M) = comp(M)+1$,
\item $comp(M\,N)=comp(M)+comp(N)$.
\end{itemize}
\item As usual we denote by $Fv(M)$ the set of variables ooccuring free in the term $M$.
\item Let $M$ and $N$ be terms. We write $N \leq M$ if $N$ is a subterm of $M$ and $N<M$, if $N\leq M$ and $N\neq M$.
\end{enumerate}
\end{defi}

In brief, the complexity of a term is the number of symbols in the term. By the formation of terms we apply the usual stipulations:
the scope of the $\l$- and $\m$-abstractions extend to the right as far as possible, moreover, the abstractions are right associative,
 whereas the term application is left associative. The calculus examined by us is the simply typed one, the typing relation is presented in
 the next definition.

\begin{defi}[Type system]
\hfill
\begin{enumerate}
\item The types are
built from atomic formulas (or, in other words, atomic types) and the constant symbol $\bot$ with
the connector $\ra$. As usual for every type $A$, $\neg A$ is an abbreviation for $A \ra \bot$.
\item The length of a type $A$ (denoted by $lh(A)$) is defined as the number of arrows of $A$.
\item In the definition below, $\G$ denotes a
(possibly empty) context, that is, a finite set of declarations of the
form $x:A$ (resp. $\a :\neg A$) for a $\l$-variable $x$ (resp. a
$\mu$-variable $\a$) and type $A$ such that a $\l$-variable $x$
(resp. a $\m$-variable $\a$) occurs at most once in an expression
$x:A$ (resp. $\a : \neg A$) of $\G$. The typing rules are:
$$\F{}{\G, x:A\;\vdash\; x:A}\; ax$$

\vspace{.3cm}

\begin{tabular}{l l} $\;\;\;\;\;\;\;\;\;\; \F {\G, x:A\;\v\; M:B}{\G \;\v\; \l x.M:A\ra B}
\; {\ra}_{i}$ &
$\;\;\;\;\;\;\;\; \F {\G \;\v\; M:A\ra B\;\;\;\;\; \G \;\v\;
N:A}{\G
\;\v\; M\,N:B}\; {\ra}_{e}$
\end{tabular}

\vspace{.5cm}

\begin{tabular}{l l} $\;\;\;\;\;\;\;\;\;\; \F {\G, \a :\neg A \v\; M:A}{\G , \a :\neg A \;\v\; [\a] M : \bot}
\; {\bot}$ &
$\;\;\;\;\;\;\;\; \F {\G, \a :\neg A \;\v\; M: \bot
}{\G \v\;\m \a. M:A}\;{\m}$
\end{tabular}
\vspace{.5cm}

\item We will say that a term $M$ is typable with $A$, if there is a set of
declarations $\G$ such that $\G \;\v\; M:A$ holds.
\end{enumerate}
\end{defi}

\begin{defi}[Reduction rules]
\hfill
\begin{enumerate}
\item We have four kinds of redexes
\begin{itemize}
\item a $\b$-redex : term of the form $(\l x.M) N$,
\item a $\mu$-redex : term of the form $(\m \a. M) N$,
\item a $\r$-redex : term of the form $[\a]\m \b. M$,
\item a $\theta$-redex : term of the form $\mu \a. [\a]M$ and $\a \not \in Fv(M)$.
\end{itemize}
We denote by $NF$ the set of normal forms i.e. terms without redex.

\item The reduction rules are as follows:
\begin{itemize}
\item The $\b$-reduction rule is $(\l x.M) N \ra_{\b} M[x:=N]$

where $M[x:=N]$ is obtained from $M$ by replacing every
$x$ in $M$ by $N$.

\item  The $\m$-reduction rule is $(\m \a. M) N \ra_{\mu} \m \a. M[\a :{=}_{r}N]$

where $M[\a :{=}_{r}N]$ is obtained from $M$ by replacing every
subterm in $M$ of the form $[\a]U$ by $[\a](U\;N)$.

\item  The $\r$-reduction is $[\a]\m \b. M \ra_{\rho} M[\b :=\a ]$

where $M[\b :=\a ]$ is obtained by exchanging in $M$ every free
occurrence of $\b $ for $\a$.

\item  The $\theta$-reduction is $\mu \a. [\a]M \ra_{\theta} M$
provided $\a \not \in Fv(M)$.
\end{itemize}

\item Let $R$ be a redex of $M$. We write $M \ra^R N$ if $N$ is the term $M$
after the reduction of $R$. If $M = M_1\ra^{R_1}M_2\ra^{R_2}\dots \ra^{R_n}M_{n+1} = N$,
then $\si=[R_1,\dots ,R_n]$ denotes this reduction sequence, $n=|\si|$ and we write $M \fl^{\si} N$.

\item Let $\si$, $\nu$ be (possibly empty) sequences of reductions. Then
$\si \# \nu$ denotes their concatenation.
Let $\si =[R_1,\dots ,R_n]$. We denote by $\si [x:=M]$ (resp. $\si [\a :=_rM]$) the reduction sequence
$[R_1[x:=M],\dots ,R_n[x:=M]]$ (resp. $[R_1[\a :=_rM],\dots ,R_n[\a :=_rM]]$).
Moreover, let $\si [\a :=\b ]$ denote the reduction sequence
$[R_1[\a :=\b ],\dots ,R_n[\a :=\b ]]$.

\item As it is customary, by a reduction step we mean the closure of the
reduction relation compatible with respect to the term formation
rules. In general $\ra$
denotes the compatible closure of a reduction relation, or that of
the union of some set of relations, while by $\fl$ we mean the
reflexive, transitive closure of $\ra$. Sometimes we write $M \fl^n N$ if $M$
is reduced with $n$ steps of reductions to $N$.

\item  If $M$ is strongly normalizing i.e. $M$ has no infinite reduction sequences, then, by K\"onig's infinity lemma,
$\eta(M)$ will denote the length of the longest reduction sequence starting from $M$.
\end{enumerate}
\end{defi}

We present below some theoretical properties of the $\lmrt$-calculus.

\begin{thm}[Church-Rosser property]
Let $M_1$, $M_2$ and $M_3$ be terms such that $M_1\fl M_2$
and $M_1\fl M_3$. Then there exists an term $M_4$ for which $M_2\fl
M_4$ and $M_3\fl M_4$.
\end{thm}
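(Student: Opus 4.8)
The plan is to prove confluence for the $\lmrt$-calculus by establishing local confluence together with strong normalization on the typed fragment — but since the statement as given concerns untyped terms (no typing hypothesis appears in the Church-Rosser statement), the honest route is the Tait--Martin-L\"of / parallel reduction technique. So first I would define a parallel (one-step) reduction $\Rightarrow$ that contracts a set of redexes simultaneously: $M \Rightarrow M'$ is defined by induction on $M$, allowing in one step any combination of $\b$-, $\m$-, $\r$- and $\th$-contractions together with a recursive parallel reduction of the remaining subterms. The key is to set up the clauses so that $\Rightarrow$ is reflexive, $\ra \subseteq\, \Rightarrow\, \subseteq \fl$, hence the transitive closure of $\Rightarrow$ equals $\fl$; confluence of $\fl$ then follows from the diamond property of $\Rightarrow$.

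Next I would prove the substitution compatibility lemmas that $\Rightarrow$ must satisfy before the diamond property is reachable: if $M \Rightarrow M'$ and $N \Rightarrow N'$ then $M[x:=N] \Rightarrow M'[x:=N']$; if $M \Rightarrow M'$ then $M[\a:=_rN] \Rightarrow M'[\a:=_rN']$ (structural substitution along the $\m$-rule); and $M[\a:=\b] \Rightarrow M'[\a:=\b]$ (renaming, for the $\r$-rule). These are routine inductions on the derivation of $M \Rightarrow M'$, but the $\m$-substitution clause is where care is needed, because a subterm $[\a]U$ becomes $[\a](U\,N)$ and one must check that contracting redexes commutes with this rewriting of named subterms; in particular a $\m$-redex $(\m\a.P)Q$ inside $M$ interacts with the enclosing $[\b]$-contexts correctly. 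I would also need the auxiliary fact that a $\m$-reduction followed by further $\m$-reductions can be amalgamated, which is exactly the statement that $[\a:=_rN]$ is well-behaved under $\Rightarrow$.

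Then I would prove the diamond property: if $M \Rightarrow M_1$ and $M \Rightarrow M_2$ then there is $M_3$ with $M_1 \Rightarrow M_3$ and $M_2 \Rightarrow M_3$, by induction on the structure of $M$ with a case analysis on the two parallel steps. The standard $\b$-$\b$ and $\b$-congruence cases go as in the $\l$-calculus, using the substitution lemma. The genuinely new work is the critical-pair cases flagged in the introduction: the overlap of a $\th$-redex $\m\a.[\a]M$ with a $\m$-redex (when $M$ itself is of the form $(\m\g.\dots)\dots$ or when $\m\a.[\a]M$ sits applied to an argument), and the overlap of a $\th$-redex with a $\r$-redex $[\a]\m\b.M$. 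For each such overlap I must exhibit the common reduct and verify both legs close up; here the clauses of $\Rightarrow$ allowing simultaneous contraction of the overlapping redexes are what make the diamond go through. The main obstacle, I expect, is precisely getting the definition of $\Rightarrow$ right around these $\th$/$\m$ and $\th$/$\r$ interactions — a naive definition will fail to be diamond because contracting the outer redex on one side destroys the inner redex whose residual is needed on the other side. Once the diamond property for $\Rightarrow$ is in hand, a trivial diagram-tiling argument (strip lemma, then the full diamond for $\fl$) yields the Church-Rosser property as stated, with $M_4$ obtained by iterating the one-step diamond over the two reduction sequences $M_1\fl M_2$ and $M_1\fl M_3$.
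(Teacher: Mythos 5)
The paper does not actually prove this theorem: it only cites Parigot, Py and Rozi\`ere for the proof, so there is no internal argument to compare yours against. Your plan --- Tait--Martin-L\"of parallel reduction, substitution lemmas, diamond property, then tiling --- is the standard and in principle correct route (it is essentially what Py does), and you are right that the absence of a typing hypothesis rules out the strong-normalization-plus-Newman shortcut.

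However, there is a genuine gap at exactly the point you wave at: you never write down the clauses of $\Rightarrow$, and the naive definition (contract in one step any set of redexes \emph{already present} in $M$, closing under the term formation rules) provably fails the diamond property for this calculus, so ``setting up the clauses so that the diamond goes through'' is the entire content of the proof, not a detail. Concretely, take $O=(\mu\alpha.[\alpha]\mu\beta.P)\,N$. One parallel step can contract the inner $\rho$-redex, giving $A=(\mu\alpha.P[\beta:=\alpha])\,N$; another can contract the outer $\mu$-redex, giving $B=\mu\alpha.[\alpha]\bigl((\mu\beta.P[\alpha:=_rN])\,N\bigr)$. From $A$ one further parallel step reaches $\mu\alpha.P[\beta:=\alpha][\alpha:=_rN]$. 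From $B$, contracting the $\mu$-redex $(\mu\beta.\dots)\,N$ yields $\mu\alpha.[\alpha]\mu\beta.P[\alpha:=_rN][\beta:=_rN]$, and the $\rho$-redex $[\alpha]\mu\beta.(\dots)$ that must still be fired to reach the common reduct is \emph{created} by that very contraction, so a naive one-step parallel reduction cannot perform it in the same step; the diamond fails. (An analogous obstruction arises from the $\theta$/$\mu$ overlap, e.g.\ $(\mu\alpha.M)N\ra_\mu\mu\alpha.[\alpha](M\,N)$ producing a fresh $\theta$-redex.) The known repairs are either Takahashi's complete-development method with a carefully defined $M^{*}$ that performs these cascades, or a strengthened parallel reduction in which a $\mu$-abstraction consumes a whole vector of arguments and the ensuing renaming/simplification in a single step --- this is precisely the issue analyzed by Baba--Hirokawa--Fujita for the type-free $\lambda\mu$-calculus and resolved in Py's thesis. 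Until you exhibit such a definition and verify the substitution lemmas and the critical-pair cases for \emph{it}, the proof is not complete.
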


A proof of the above assertion can be found in the papers of Parigot
\cite{Par1}, Py \cite{Py} or Rozi\`ere \cite{Roz}.
Py \cite{Py} expounded the question to a greater extent
together with the results belonging to the topic.

\begin{prop}[Type preservation property]
Let $M$, $N$ and $A$, $\G$ be such that $\G\;\v\; M:A$ and $M\fl
N$. Then $\G \;\v\; N:A$.
\end{prop}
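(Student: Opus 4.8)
The plan is to prove subject reduction in the standard way: reduce the claim to the contraction of a single redex at the root of the term, and then dispatch the four reduction rules by means of substitution lemmas tailored to the three substitution operations of the calculus.

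First, since $\fl$ is the reflexive--transitive closure of $\ra$, it suffices to show that $\G\v M:A$ and $M\ra N$ imply $\G\v N:A$ (reflexivity is trivial, and a sequence of $n+1$ steps is handled by composing one step with the induction hypothesis). For a single step I would argue by induction on the position at which the contracted redex occurs: in every case except the root, the outermost constructor of $M$ is left unchanged, the typing rules are syntax directed, so it is enough to retype the rewritten immediate subterm by the induction hypothesis and re-apply the last rule of the derivation of $\G\v M:A$. This leaves the case in which $M$ is itself the redex being contracted. For it I would first record two routine facts about the type system: a \emph{weakening} lemma ($\G\v M:A$ and $\G\subseteq\G'$ give $\G'\v M:A$) together with the corresponding \emph{strengthening} lemma removing a declaration of a variable not free in $M$; and an \emph{inversion} (generation) lemma reading off from $\G\v M:A$ the last rule applied and the types of the immediate subterms of $M$ --- e.g. $\G\v\m\a.P:A$ forces $\G,\a:\neg A\v P:\bot$, and $\G\v[\a]P:\bot$ forces $\a:\neg B\in\G$ and $\G\v P:B$ for some $B$.

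Next I would establish the three substitution lemmas, each by a straightforward induction:
\begin{enumerate}
\item \emph{Term substitution:} if $\G,x:A\v M:B$ and $\G\v N:A$, then $\G\v M[x:=N]:B$ --- exactly as in the $\l$-calculus.
\item \emph{Structural substitution:} if $\G,\a:\neg(A\ra B)\v M:C$ and $\G\v N:A$ (with $\a\notin Fv(N)$), then $\G,\a:\neg B\v M[\a:=_rN]:C$; the crucial case is $M=[\a]U$, where $U:A\ra B$ yields $(U\,N):B$ and hence $[\a](U\,N):\bot$, now reading the new type $\neg B$ off the context for $\a$.
\item \emph{Renaming:} if $\G,\a:\neg A,\b:\neg A\v M:C$, then $\G,\a:\neg A\v M[\b:=\a]:C$.
\end{enumerate}
With these lemmas the four base cases are immediate. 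For $(\l x.M)N\ra_\b M[x:=N]$, inversion gives $\G,x:A\v M:B$ and $\G\v N:A$, so (1) applies. For $(\m\a.M)N\ra_\m\m\a.M[\a:=_rN]$, inversion gives $\G,\a:\neg(A\ra B)\v M:\bot$ and $\G\v N:A$, so (2) yields $\G,\a:\neg B\v M[\a:=_rN]:\bot$ and rule $\m$ gives the claim. For $[\a]\m\b.M\ra_\r M[\b:=\a]$, inversion gives $\a:\neg A\in\G$ and $\G,\b:\neg A\v M:\bot$, and (3) yields $\G\v M[\b:=\a]:\bot$. For $\m\a.[\a]M\ra_\th M$ with $\a\notin Fv(M)$, inversion gives $\G,\a:\neg A\v M:A$, and strengthening removes $\a$, giving $\G\v M:A$.

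I expect the structural substitution lemma (2) to be the only genuinely delicate point, since the type of $\a$ changes from $\neg(A\ra B)$ to $\neg B$ under the operation $[\,\cdot\,]:=_r N$: the induction must be arranged so that this change is coherent across all occurrences of the form $[\a]U$, and the freshness of $\a$ with respect to $N$ and $\G$ --- which one gets for free from the variable convention on the redex $(\m\a.M)N$ --- is needed in order to invoke weakening inside that case. The remaining cases are routine and essentially identical to the $\l$-calculus.
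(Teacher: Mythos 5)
Your proof is correct and follows essentially the route the paper indicates: the paper dispatches this proposition with a one-line remark that it is verified by double induction on the length of the reduction sequence and the complexity of $M$, which is exactly your decomposition into a single-step reduction at the root plus the standard inversion, weakening/strengthening, and three substitution lemmas. Your write-up simply supplies the details (in particular the type change of $\a$ from $\neg(A\ra B)$ to $\neg B$ in the structural substitution lemma) that the paper leaves implicit.
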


The property can be verified by double induction on the length of
the reduction sequence $M\fl N$ and the complexity of $M$.

\begin{thm}[Strong normalization]\label{ch2:snforlm}
If $M$ is a typable term, then $M$ is strongly normalizing i.e. every reduction
sequence starting from $M$ is finite.
\end{thm}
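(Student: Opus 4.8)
The plan is to prove strong normalization of the typed $\lmrt$-calculus by the Tait--Girard reducibility method, following Parigot's original argument for the $\lm$-calculus~\cite{Par3} and adapting it to cover the $\rho$- and $\theta$-rules as well. First I would define, by induction on the type $A$, a set of \emph{reducible} terms $R_A \subseteq \mathcal{T}$: for atomic $A$ and for $\bot$, $R_A$ is the set of strongly normalizing terms of type $A$; for $A \ra B$, a term $M$ of that type is in $R_{A\ra B}$ iff $M\,N \in R_B$ for every $N \in R_A$. Since $\mu$-abstractions can consume an unbounded number of arguments through the $\mu$-rule, the standard treatment also requires handling the negated types $\neg A$ assigned to $\mu$-variables; the usual device is to interpret a $\mu$-variable $\a:\neg A$ by the set of terms that behave well under all substitutions $[\a:=_r N]$ with $N\in R_A$, or equivalently to close the reducibility candidates under the operations generated by named applications $[\a](-\,N)$. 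I would then establish the three Girard-style properties (CR1) $R_A$ consists of strongly normalizing terms, (CR2) $R_A$ is closed under reduction, and (CR3) a \emph{neutral} term (one that is not a $\l$- or $\mu$-abstraction, i.e. cannot itself create a redex at the head when placed in an applicative context) all of whose one-step reducts lie in $R_A$ is itself in $R_A$. These are proved simultaneously by induction on $A$, exactly as in the pure $\l$-calculus, the only novelty being the bookkeeping for the new term constructors $[\a]M$ and $\m\a.M$.

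The heart of the argument is the \emph{adequacy lemma}: if $x_1:A_1,\dots,x_n:A_n,\ \a_1:\neg B_1,\dots,\a_m:\neg B_m \v M:A$, and $N_i \in R_{A_i}$ and the $\mu$-variables are assigned suitable reducibility data, then the simultaneous substitution $M[\vec x := \vec N]$ (together with the $\mu$-variable interpretations) lies in $R_A$. This is proved by induction on the derivation of $M:A$. The variable, $\ra_i$, and $\ra_e$ cases are the classical ones. The new cases are the $\bot$-rule and the $\m$-rule: for $[\a]M$ one uses the interpretation attached to $\a$ to conclude membership in $R_\bot$, and for $\m\a.M$ one must show $\m\a.M' \in R_A$ knowing that $M'$ behaves reducibly; here one applies (CR3) after checking that every one-step reduct of $\m\a.M'$ — obtained either by reducing inside $M'$, or by a $\mu$-step $(\m\a.M')N \to \m\a.M'[\a:=_r N]$, or by the $\theta$-collapse $\m\a.[\a]M'' \to M''$, or by a $\rho$-step occurring when $M'$ itself starts with $[\a]\m\b.\dots$ — stays reducible. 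The $\mu$-step is handled by the standard induction on the interpretation of $\a$; the $\theta$- and $\rho$-steps require only that the resulting term is again covered by the inductive data, which follows because $\theta$ strictly decreases complexity and $\rho$ merely renames a $\mu$-variable. Taking all $x_i$ and $\a_j$ to be themselves (which are neutral and trivially reducible by (CR1)+(CR3)), adequacy gives $M \in R_A$, hence $M$ is strongly normalizing by (CR1).

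The main obstacle is the interplay of the $\mu$-rule with the simplification rules $\rho$ and $\theta$ inside the reducibility argument: unlike pure $\beta$-reduction, a single $\m$-abstraction can absorb arbitrarily many arguments, so in the $\m\a.M$ case of adequacy one cannot simply peel off one argument and recurse on the type — one must set up the interpretation of $\mu$-variables so that it is stable under the accumulation of named applications $[\a](U\,N)$ and simultaneously under the $\theta$- and $\rho$-collapses, which can make an argument disappear or merge two $\mu$-contexts. Getting this definition right (so that (CR1)--(CR3) and adequacy all go through, with $\theta$ and $\rho$ causing no circularity in the induction) is the delicate point; once the candidate structure is chosen correctly, the remaining cases are routine. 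An alternative, if one prefers to avoid reducibility, is to cite the arithmetical proof of David and Nour~\cite{Dav-Nou1} for the $\lm$-fragment and extend it by noting that $\rho$ and $\theta$ do not create infinite reductions — $\theta$ strictly reduces $comp$, and $\rho$ only renames — but the reducibility route gives the cleanest self-contained proof.
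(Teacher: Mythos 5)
Your proposal takes a genuinely different route from the paper, because the paper gives no self-contained proof of this theorem at this point: it refers to the literature (Parigot \cite{Par3}, David--Nour \cite{Dav-Nou1}, de Groote \cite{de Gro2}) for the $\b\m$-fragment, explicitly noting that the cited proofs do not cover the $\rho$- and $\th$-rules, and then observes that strong normalization of the full $\lmrt$-calculus is obtained as a by-product of the paper's main quantitative result (the explicit bound of Theorem \ref{ch2:bflmr}, reached via standardization and the translation into $\lm$I-terms). A reducibility argument is therefore a legitimate alternative, and for the $\b\m$-fragment your outline is the standard one.

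However, there is a genuine gap, and it sits exactly at the point you flag as ``delicate'' and then do not resolve: the stability of the candidates under $\rho$ and $\th$. The justification you give --- $\th$ strictly decreases complexity and $\rho$ merely renames --- shows only that $\rho\th$-reduction \emph{in isolation} terminates; it does not show that the reducts remain in the candidates. Concretely, in the (CR3) step for a $\m$-abstraction one must consider $(\m\a.[\a]M'')N_1\dots N_p$, which is simultaneously a $\m$-redex and contains a $\th$-redex; the $\th$-contraction produces $(M''\,N_1\dots N_p)$, whose reducibility is precisely what is being proved at that moment and is not delivered by a complexity count. This critical-pair interaction between $\m$ and $\th$ (and between $\rho$ and $\th$) is exactly what the paper's standardization machinery is designed to control. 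Your alternative route (cite David--Nour for $\b\m$ and observe that $\rho,\th$ create no infinite reductions) is closer to a complete argument, but it too needs a real lemma: a postponement/commutation statement of the shape $\ra_{\rho\th}\cdot\ra_{\b\m}\subseteq\ \ra_{\b\m}^{+}\cdot\ra_{\rho\th}^{*}$ combined with termination of $\ra_{\rho\th}$, whose case analysis is again the overlap of $\th$ with $\m$ and of $\rho$ with $\th$. That lemma is provable and would make the second route the cleanest self-contained proof, but as written neither of your routes actually closes the argument for the $\rho$- and $\th$-rules.
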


There are several proofs of this result in the literature.
Consider, for example, Parigot \cite{Par3}, David and Nour
\cite{Dav-Nou1}. In \cite{de Gro2}, de Groote proves the strong normalization
of the simply typed $\l \m$-calculus extended with terms of
conjunctive and disjunctive types, respectively. He does not
consider the $\rho$- and $\th$-reduction rules in his calculus.\\
Albeit, we aim to find an upper bound for the reduction sequences of the $\l \m$-calculus enriched with the $\rho$-
and $\theta$-rules, as a by-product we also obtain a proof for the strong normalization of the $\l \m \rho \theta$-calculus.
We consider fewer simplification rules than Saurin \cite{Sau}, however, our notion of
standardness is formulated in a different form which enables us to prove statements concerning standard reduction sequences by
induction on the complexity of terms.

In this paper we consider only simply typed $\l\m$-terms. The typing relations involve
that a $\m$-variable cannot have but one argument, that is, we are not allowed to formulate terms of the form $(([\a] M)\;N)$,
where $\a$ is a $\mu$-variable and $M$, $N$ are arbitrary terms.

\subsection{Head and leftmost reductions}

In order to proceed to the standardization theorem, we define the notions of head- and leftmost reduction sequences.
Both are special cases of the standard reduction sequences discussed in the next section.

\begin{defi}
\hfill
\begin{enumerate}
\item Let $M$ be a term and $\ora{P}$  a possibly empty sequence of terms $P_1,\dots, P_n$.
We write $(M\; \ora{P})$ for the term $(\dots ((M \;P_1) \; P_2) \; \dots P_n)$, which also is denoted by $(M \;P_1\dots P_n)$.
\item Let $M=(P_1\dots P_n)=(P_1\;\ora{P})$, with a possibly
empty sequence of terms \smash{$\ora{P}$}.
Then, for $2 \leq i\leq n$, we write $P_i\in \ora{P}$ and we call $P_i$ $(2 \leq i\leq n)$ the components of $\ora{P}$
or the arguments of $P_1$.
\item Let $\ora{P} = P_1\ldots P_n$. We write $\ora{P}\fl^\si\ora{P'}$, where $\si=\si_1\#\ldots \#\si_n$ is
such that $P_i\fl^{\si_i}P_i'$ $(1 \leq i\leq n)$.
\end{enumerate}
\end{defi}

\begin{lem}\label{ch2:repr}
Every term $M$ of the simply typed $\lmrt$-calculus can be written
uniquely in one of the following forms.
\hfill
\begin{enumerate}
\item $M$ is a variable,
\item $M=\l x.M_1$, or $M=\m \a. M_1$ and $M$ is not a $\th$-redex, or $M=[\a]M_1$ and $M$ is not a $\rho$-redex,
\item $M=(x\;M_1\ora{P})$,
\item $M=(\l x.M_1)M_2\ora{P}$, or  $M=(\m \a.M_1)M_2\ora{P}$,
\item $M=[\a]\m \b. M_1$, or $M=\m \a. [\a]M_1$ and $\a \notin Fv(M_1)$.
\end{enumerate}
\end{lem}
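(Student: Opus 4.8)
The plan is to prove Lemma~\ref{ch2:repr} by structural induction on the term $M$, following the grammar of Definition~\ref{int:termsdegroote}, and checking in each case that exactly one of the five listed shapes applies. I would first argue existence (every term has at least one of the forms) and then uniqueness (no two forms can hold simultaneously); in fact both are best handled together by a careful case analysis on the outermost constructor.

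\medskip

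\noindent\textbf{Existence.}
I proceed by induction on $comp(M)$.
\begin{itemize}
\item If $M\in\mathcal{V}$, we are in case~(1).
\item If $M=\l x.M_1$: if $M_1$ is not of the shape $M_2\ora{P}$ with a nonempty argument list (i.e.\ $M_1$ is not an application), then $M$ itself is not the left side of a $\beta$-redex applied to arguments, and we are in case~(2); otherwise write $M_1=(N\;\ora{P})$ and note $M=\l x.M_1$ still falls under case~(2), since the clause in~(2) only requires $M$ to be a $\l$-abstraction. So $M=\l x.M_1$ is always in case~(2). (The subtlety that a $\l$-abstraction can later become the head of a $\beta$-redex is not relevant here: in case~(2) the term $M$ is the whole term, not applied to anything.)
\item If $M=\m\a.M_1$: either $M$ is a $\theta$-redex, i.e.\ $M_1=[\a]N$ with $\a\notin Fv(N)$ --- then we are in case~(5), second alternative --- or $M$ is not a $\theta$-redex, and we are in case~(2).
\item If $M=[\a]M_1$: either $M_1=\m\b.N$, so $M$ is a $\rho$-redex and we are in case~(5), first alternative; or $M_1$ is not a $\m$-abstraction, so $M$ is not a $\rho$-redex. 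In the latter case, inspect $M_1$: if $M_1$ is a variable $x$, then $M=[\a]x=([\a]x)$ has no arguments; but wait---$[\a]M_1$ is itself not an application, so to fit it into cases~(3) or~(4) we would need it to be a head applied to arguments, which it is not. Hence $M=[\a]M_1$ with $M_1$ not a $\m$-abstraction falls under case~(2) (third alternative). Note that by the typing restriction stated in the paper, $([\a]M_1)$ is never applied to further arguments, so no conflict with~(3),(4) arises.
\item If $M=(M'\;M'')$ is an application: write $M=(P_1\;\ora{P})$ with $\ora{P}=P_2\ldots P_n$ nonempty, where $P_1$ is the leftmost non-application subterm along the spine (this spine decomposition is unique). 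Now case-split on $P_1$. If $P_1$ is a variable $x$, we are in case~(3). If $P_1=\l x.M_1$ or $P_1=\m\a.M_1$, we are in case~(4). The remaining possibilities for $P_1$---that $P_1=[\a]N$---are excluded by the typing restriction recalled in the paper (a $\m$-variable application $[\a]N$ can never be the head of a further application), and $P_1$ cannot itself be an application by choice of the spine decomposition. Hence $M$ is in case~(3) or~(4).
\end{itemize}

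\medskip

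\noindent\textbf{Uniqueness.}
The five cases are mutually exclusive by inspection of the outermost constructor together with the side conditions:
\begin{itemize}
\item A variable (case~(1)) is distinct from all others, which have a compound outermost form.
\item Case~(2) requires $M$ to be a $\l$-abstraction, a non-$\theta$-redex $\m$-abstraction, or a non-$\rho$-redex term of the form $[\a]M_1$; case~(5) requires $M$ to be a $\rho$-redex or a $\theta$-redex; these are disjoint by the explicit side conditions. Cases~(3),(4) require $M$ to be an application, which none of the terms in~(2) or~(5) are.
\item Cases~(3) and~(4) are separated by the nature of the spine head $P_1$ (a variable versus an abstraction), and the spine decomposition $M=(P_1\;\ora{P})$ is itself unique because term application is left-associative and $\ora{P}$ is determined by peeling off arguments until the head is reached.
\item Within case~(2) the three alternatives are pairwise disjoint (different outermost constructors), and likewise within case~(5).
\end{itemize}

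\medskip

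\noindent I do not expect a serious obstacle; the one point requiring care is making the spine decomposition $M=(P_1\;\ora{P})$ genuinely canonical and ruling out a $\mu$-application $[\a]N$ in head position, which is exactly where the typing restriction noted just before the lemma (``a $\mu$-variable cannot have but one argument'') is used. Everything else is a routine exhaustive check against the five-clause grammar.
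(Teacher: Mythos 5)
Your proposal is correct and follows essentially the same route as the paper, whose entire proof is ``By induction on $comp(M)$''; you have simply written out the case analysis that proof tacitly relies on, including the one genuinely non-trivial point (using the typing restriction to rule out a head of the form $[\a]N$ in the application case).
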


\begin{proof}
By induction on $comp(M)$.
\end{proof}

In the following definitions the functions $hr$ and $lr$ are undefined in the cases not mentioned explicitly.

\begin{defi} \label{ch2:hdrdx}
\hfill
\begin{enumerate}
\item The head-redex of a term $M$, in notation $hr(M)$,
is defined as follows.
\begin{itemize}
\item $hr(\l x.M)=hr(M)$,
\item $hr(\m \a. M)=\m \a. M$ if $\m \a M$ is a $\th$-redex, and $hr(\m \a.
M)=hr(M)$ otherwise,
\item $hr([\a]M)=[\a]M$ if $[\a]M$ is a $\rho$-redex, and
$hr([\a]M)=hr(M)$ otherwise,
\item $hr((\m \a. M_1)M_2\ora{P})=(\m \a.
M_1)M_2$,
\item $hr((\l x.M_1)M_2\ora{P})=(\l x.M_1)M_2$.
\end{itemize}

If $M=(\m \a. M_1)M_2\ora{P}$, then a critical pair of redexes can emerge provided $(\m
\a. M_1)$ is a $\th$-redex as well. In this situation we always
choose the $\m$-redex $(\m \a. M_1)M_2$ as the head-redex of $M$.
\item Let $M_1\ra^{R_1}M_2\ra^{R_2}\dots \ra^{R_n}M_{n+1}$. Then $\si
=[R_1,\dots ,R_n]$ is a head-reduction sequence, if, for each
$1\leq i\leq n$, $R_i$ is the head-redex of $M_i$. We denote by $M\fl_{hd}N$ the fact that $M$
reduces to $N$ via a head-reduction sequence.
\end{enumerate}
\end{defi}

\begin{defi}\label{ch2:lmredex}
\hfill
\begin{enumerate}
\item The leftmost-redex of a term $M$, in notation $lr(M)$,
is defined as follows.
\begin{itemize}
\item $lr(\l x.M)=lr(M)$,
\item $lr(\m \a. M)=\m \a. M$ if $\m \a M$ is a $\th$-redex, and $lr(\m \a.
M)=lr(M)$ otherwise,
\item $lr([\a]M)=[\a]M$ if $[\a]M$ is a $\rho$-redex, and
$lr([\a]M)=lr(M)$ otherwise,
\item $lr((\m \a. M_1)M_2\ora{P})=(\m \a.
M_1)M_2$,
\item $lr((\l x.M_1)M_2\ora{P}))=(\l x.M_1)M_2$,
\item $lr(x\;M_1 \;M_2 \dots M_n)=lr(M_i)$ provided $M_i\notin NF$ and
$M_j\in NF$ $(1\leq j\leq i-1)$.
\end{itemize}
\item A reduction sequence $M_1\ra^{R_1}M_2\ra^{R_2}\dots
\ra^{R_n}M_{n+1}$ is the leftmost-reduction sequence
from $M_1$ to $M_{n+1}$ if $R_i$ is the leftmost-redex of $M_i$
$(1\leq i\leq n)$. We denote by $M\fl_{lrs}N$ the fact that $M$
reduces to $N$ via a leftmost-reduction sequence. Then the
reduction sequence itself is denoted by $lrs(M\fl N)$.
If $M\fl^{\si}N$ and $\si$ is a leftmost reduction sequence, then  $\si$ is unique.
\end{enumerate}
\end{defi}

Following the tradition of relating head reduction sequences to leftmost reduction sequences in the case of the $\l$-calculus \cite{Baren},
we compare briefly the two notions of reductions.

\begin{lem}\label{ch2:hdlm}
Every head-reduction sequence is a leftmost-reduction sequence.
\end{lem}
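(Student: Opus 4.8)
The plan is to proceed by induction on $comp(M)$, following the structural decomposition of $M$ supplied by Lemma~\ref{ch2:repr}, and to show that in every case the head-redex $hr(M)$, when defined, coincides with the leftmost-redex $lr(M)$, and moreover that the recursive calls made by $hr$ and $lr$ descend into the same subterm. Since a head-reduction sequence $M_1 \ra^{R_1} M_2 \ra^{R_2} \dots$ is one where each $R_i = hr(M_i)$, it suffices to prove the single-step statement $hr(M) = lr(M)$ whenever $hr(M)$ is defined; the sequence-level claim then follows immediately because the same reduction that was the head-redex is also the leftmost-redex at every stage.

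First I would dispatch the cases where $hr$ is defined directly: if $M = \m\a.M_1$ is a $\th$-redex, both $hr$ and $lr$ return $M$ itself; similarly if $M = [\a]M_1$ is a $\rho$-redex; and if $M = (\l x.M_1)M_2\ora{P}$ or $M = (\m\a.M_1)M_2\ora{P}$, both return the outermost $\b$- or $\m$-redex $(\l x.M_1)M_2$ (resp. $(\m\a.M_1)M_2$), with the critical-pair tie-breaking convention spelled out identically in Definitions~\ref{ch2:hdrdx} and~\ref{ch2:lmredex}. For the recursive cases $M = \l x.M_1$, $M = \m\a.M_1$ (not a $\th$-redex), $M = [\a]M_1$ (not a $\rho$-redex), the clauses of $hr$ and $lr$ both strip the leading constructor and recurse on $M_1$; since $comp(M_1) < comp(M)$, the induction hypothesis gives $hr(M_1) = lr(M_1)$, and hence $hr(M) = lr(M)$ — noting that if $hr(M)$ is defined then so is $hr(M_1)$, so the induction hypothesis applies.

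The one genuinely delicate case is $M = (x\;M_1\,\ora{P})$, i.e. $M = (x\;M_1\,M_2\dots M_n)$: here $lr$ is defined and recurses into the first non-normal argument $M_i$, whereas $hr$ has \emph{no clause at all} for a term whose head is a variable — $hr(M)$ is undefined. This is exactly the place where "every head-reduction sequence is a leftmost-reduction sequence" is not an equality of functions but a genuine inclusion of sequences: a head-reduction sequence can simply never reach such a term $M$ and perform a step there, because $hr(M)$ would be required and is undefined. So the argument I expect to need is: if $\si = [R_1,\dots,R_n]$ is a head-reduction sequence starting from $M_1$, then for each $i$, $R_i = hr(M_i)$ is defined, and by the case analysis above $hr(M_i) = lr(M_i)$; hence $\si$ is a leftmost-reduction sequence. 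The only subtlety to check carefully is that the case $M_i = (x\,\ora{P})$ cannot occur as an intermediate term of a head-reduction sequence in a way that would break the argument — but it cannot, since if $M_i$ had that shape then $hr(M_i)$ is undefined and $\si$ could not have continued with step $R_i$; either $i = n+1$ (end of sequence, nothing to prove) or we have a contradiction. I would therefore organize the proof as: "Let $\si$ be a head-reduction sequence; by induction on $comp(M_i)$ using Lemma~\ref{ch2:repr}, $hr(M_i) = lr(M_i)$ in every case where $hr(M_i)$ is defined, which by definition of head-reduction sequence it is for all $i \le n$; hence $R_i = lr(M_i)$ for all $i$, so $\si$ is leftmost." The main obstacle, such as it is, is purely bookkeeping: making sure the variable-head case is handled by observing it is excluded from head-reduction sequences rather than by trying to force an equality there.
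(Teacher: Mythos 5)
Your proposal is correct and matches the paper's proof, which is exactly a straightforward induction on $comp(M)$ comparing the subcases of Definitions~\ref{ch2:hdrdx} and~\ref{ch2:lmredex}; your extra care with the variable-headed case (where $hr$ is undefined and hence a head-reduction sequence simply cannot take a step) is the right way to make "straightforward" precise.
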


\begin{proof}
A straightforward induction on the complexity of the term, comparing the various subcases
of Definitions \ref{ch2:hdrdx} and \ref{ch2:lmredex}.
\end{proof}

We give a sketch of the proof that every leftmost-reduction sequence is the concatenation
of head-reduction sequences, however.
To this end, we first settle what we mean by a term being in head-normal form.

\begin{defi}
A term $M$ is in head-normal form (in notation $M\in HNF$), if one of the following cases is valid.
\begin{enumerate}
\item $M=\l x.M_1$ and $M_1\in HNF$,
\item $M=(x\;M_1\ldots M_k)$,
\item $M=\mu \a. M_1$, $M$ is not a $\theta$-redex and $M_1\in HNF$,
\item $M=[\a] M_1$, $M$ is not a $\rho$-redex and $M_1\in HNF$.
\end{enumerate}
We say that $M'\in HNF$ is a head-normal form of $M$, if $M \fl_{hd} M'$.
Observe that, since the typed $\lmrt$-calculus is strongly normalizing, every term has a unique head-normal form.
\end{defi}

Prior to detailing the connection between leftmost reduction and head reduction, we introduce a new notion.

\begin{defi}
Let $M\in HNF$.
\begin{enumerate}
\item The core of $M$, in notation $core(M)$, is defined as follows.
\begin{itemize}
\item If $M=\l x. M_1$, or $M=\m \a. M_1$, or $M=[\a]M_1$, then $core(M)=core(M_1)$.
\item If $M=x$, or $M=(M_1\;M_2)$, then $core(M)=M$.
\end{itemize}
Observe that, if $M\in HNF$, $core(M)$ can be obtained from $M$ if we omit the initial $\l$-, $\m$-prefixes
or $\m$-variables standing in front of $M$.
\item Assume $core(M)=(x\;\ora{P})$ with a possibly empty $\ora{P}$.
Then we call the components of $\ora{P}$ the components of $M$, as well.
\end{enumerate}
\end{defi}

Intuitively, a leftmost reduction sequence is a head reduction sequence until the term reaches a head normal form.
At this point, the leftmost reduction sequence is the concatenation of leftmost reduction sequences of the components.
This is the content of the lemma below.

\begin{lem}
Let $M\fl^{\si}M''$ be a leftmost reduction sequence and assume $M''\in NF$. Then there exists $M'\in HNF$ and $\si'$, $\si''$
such that $M\fl^{\si'}M'\fl^{\si''}M''$, where $\si'$ is a head reduction sequence and,
if $core(M')=(x\;M_1'\ldots M_k')$, then $\si''=\nu_1\#\ldots\#\nu_k$ such that
$core(M'')=(x\;M_1''\ldots M_k'')$, $M_i'\fl^{\nu_i}M_i''$ and $\nu_i$ are leftmost reduction sequences $(1\leq i\leq k)$.
\end{lem}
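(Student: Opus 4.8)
The plan is to proceed by induction on the length $|\si|$ of the leftmost reduction sequence $M\fl^{\si}M''$, using Lemma~\ref{ch2:repr} to do case analysis on the shape of $M$. The base case $|\si|=0$ is immediate: if $M\in NF$ then $M\in HNF$, we take $M'=M$, $\si'=\si''$ empty, and $core(M)=(x\;M_1'\ldots M_k')$ forces each $M_i'$ to already be in normal form, so the trivial $\nu_i$ work. For the inductive step, write $\si=[R_1]\#\tau$ where $R_1=lr(M)$ is the leftmost redex and $M\ra^{R_1}N\fl^{\tau}M''$, with $\tau$ again a leftmost reduction sequence by uniqueness (Definition~\ref{ch2:lmredex}).

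The heart of the argument is the following dichotomy on $M$, read off from Lemma~\ref{ch2:repr}. \emph{Case (a): $M$ is already a head redex}, i.e.\ $M$ falls under clauses (2), (4) or (5) of Lemma~\ref{ch2:repr} with the outermost redex being the head redex — concretely $M=(\l x.M_1)M_2\ora{P}$, $M=(\m\a.M_1)M_2\ora{P}$, $M=[\a]\m\b.M_1$, or $M=\m\a.[\a]M_1$ with $\a\notin Fv(M_1)$, or these prefixed by $\l$-, $\m$-abstractions or $[\a]$'s that are not themselves redexes. Then $lr(M)=hr(M)$, so $R_1$ is the head redex; we apply the induction hypothesis to $N\fl^{\tau}M''$, obtaining $N\fl^{\si_1'}M'\fl^{\si''}M''$ with $\si_1'$ a head reduction sequence, and set $\si'=[R_1]\#\si_1'$, which is again a head reduction sequence since $M\ra^{R_1}N$ was a head step. \emph{Case (b): $M\in HNF$ already}, i.e.\ $M$ falls under clause (3) of Lemma~\ref{ch2:repr}, $core(M)=(x\;M_1\ora{P})$. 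Then by definition of $lr$ (the clause $lr(x\;M_1\ldots M_n)=lr(M_i)$ with $M_j\in NF$ for $j<i$), the leftmost redex $R_1$ lies inside the first non-normal component $M_i$; since the abstraction/$[\a]$ prefixes in front of $core(M)$ are untouched and each leftmost reduction step in $\si$ continues to act inside the components until they are all normal, the whole of $\si$ decomposes as $\nu_1\#\ldots\#\nu_k$ with $M_j\fl^{\nu_j}M_j''$ componentwise; here we take $M'=M$, $\si'$ empty, $\si''=\si$, and we must check each $\nu_j$ is itself a leftmost reduction sequence — this follows because a leftmost reduction of $(x\;M_1\ldots M_n)$ restricted to the active component is exactly a leftmost reduction of that component, and once a component reaches normal form the leftmost redex passes to the next one.

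The main obstacle I expect is the bookkeeping in Case (b): justifying rigorously that a leftmost reduction sequence of a term whose core is $(x\;\ora{P})$ never again touches the prefixes or the head position and genuinely splits as a concatenation of leftmost reductions of the $M_j$ in left-to-right order. This requires a small sub-induction (or a separate lemma) showing that if $core(M)=(x\;\ora{P})$ and $M\ra^{lr(M)}N$, then $core(N)=(x\;\ora{P'})$ with $\ora{P}\fl\ora{P'}$ changing exactly one component, namely the leftmost non-normal one, by its own leftmost redex. One also has to be careful that substitution and the $\m$-, $\rho$-, $\th$-reductions performed inside a component stay inside that component — this is where the restriction to simply typed $\l\m$-terms (a $\m$-variable carries exactly one argument, so no term of the form $(([\a]M)N)$ arises) is used, guaranteeing the component structure of the core is stable. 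The remaining cases — $M$ a variable, or $M=\l x.M_1$, $M=\m\a.M_1$ (not a $\th$-redex), $M=[\a]M_1$ (not a $\rho$-redex) with the redex strictly inside — reduce to Case (a) or (b) applied to $M_1$ after peeling the prefix, using that $lr$ and $hr$ and $core$ all commute with peeling non-redex prefixes, and that $HNF$ and $NF$ are preserved under adding/removing such prefixes. Finally, Lemma~\ref{ch2:hdlm} is invoked implicitly to know the head reduction sequence $\si'$ we build is indeed a leftmost one, keeping the statement coherent.
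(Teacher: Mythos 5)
Your proposal is correct and follows essentially the same route as the paper's (quite terse) proof: the key observations in both are that the leftmost redex coincides with the head redex as long as the term is not in head normal form, and that once the core is $(x\;M_1\ldots M_k)$ the last clause of Definition~\ref{ch2:lmredex} forces the remaining leftmost reduction to split componentwise in left-to-right order. The only cosmetic difference is that the paper organizes the induction on $comp(M)$ while you induct on $|\si|$ with prefix-peeling, which changes nothing essential.
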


\begin{proof}
By induction on $comp(M)$ taking into account the subcases of Definition \ref{ch2:lmredex}.
Let $M\fl_{lrs}^{\si}M''$ and assume $\si=[R]\#\widehat{\si}$.
A straightforward observation of the points of Definitions \ref{ch2:hdrdx} and \ref{ch2:lmredex} gives that,
if $M''\notin HNF$, then $R$ is the head redex of $M$. Hence we may assume $M\fl^{\si''}M''$,
where $M\in HNF$. Then, by induction on $comp(M)$, we obtain that we may suppose that $M=x$ or $M=(x\;M_1\ldots M_n)$.
By Definition \ref{ch2:lmredex}, both assumptions immediately yield the result.
\end{proof}

\subsection{Other definitions}

We define in this subsection the notion of a $\l\m$I-term and a $\l\m$I-redex, together with some main properties of the $\l\m$I-calculus.

\begin{defi}
\hfill
\begin{enumerate}
\item The set of $\lm I$-terms is defined inductively as follows:
\begin{itemize}
\item $x$ is a $\lm I$-term,
\item $\l x.M$ is a $\lm I$-term provided $M$ is a $\lm I$-term and $x\in Fv(M)$,
\item $(M\;N)$ is a $\lm I$-term if $M$ and $N$ are $\lm I$-terms,
\item $\mu \a. M$ is a $\lm I$-term provided $M$ is a $\lm I$-term and $\a \in Fv(M)$,
\item $[\a]M$ is a $\lm I$-term provided $M$ is a $\lm I$-term.
\end{itemize}
\item If $M=(\l x.M_1)M_2$ (resp.  $M=(\mu \a. M_1)M_2$, where $\l x. M_1$ (resp. $\mu \a. M_1$) and $M_2$ are $\l\m$I-terms,
then $M$ is called a $\l\m$I-redex.
\end{enumerate}
\end{defi}
It is easy to see that, if $M$ is a $\l \m$I-term and $M \fl M'$, then $M'$ is a $\l \m$I-term. Thus,
it is clear that this calculus also has the following three properties:
Church-Rosser-property, type preservation and strong normalization.\\

The next section is concerned with a standardization result in the $\l\m\rho\th$-calculus.
In the sequel, we are going to use the notions of subterms, redexes, reduction sequences,
residuals etc. in an intuitive manner.

A reduction sequence $M_1\ra^{R_1}M_2\ra^{R_2}\ldots\ra^{R_n} M_{n+1}$ is a sequence of terms and redex occurrences,
where $M_{i+1}$ is obtained by reducing with $R_i$ in $M_i$ $(1\leq i\leq n)$. In what follows, by an abuse of notation,
a reduction sequence will be referred to without noting explicitly the exact occurrences of the redexes in the terms,
if they are clear from the context. We give a short account of the intuitive notions for residuals and involvement
of redexes.

\begin{defi}
\hfill
\begin{enumerate}
\item Let $M\ra^RM'$ be a reduction step.
\begin{enumerate}
\item If $R=(\l x.R_1)R_2 < M$, then $R$ and $\l x.R_1$ have no residuals, otherwise,
if $(\l x.R_1)R_2 < U\leq M$, we obtain the residual of $U$ by exchanging $(\l x.R_1)R_2$ for $R_1[x:=R_2]$ in $U$.
When $U\leq R_1$, then we obtain the residual by substituting each occurrence of $x$ by $R_2$ in $U$. In the case of $U\leq R_2$,
the residual of $U$ is the same, only its position changes in $M'$: its index will be one of the indices
of a former occurrence of $x$ in $R_1$. If $R$ and $U$ are disjoint, then the residual of $U$ is $U$ itself.

\item The situations are analogous in the cases of the other redexes: if $R=(\m\a. R_1)R_2$, then $R$ has no residual,
if $R=[\a]\m\b. R_1$, then $R$ and $\m\b. R_1$ have no residuals, and, finally, if $R=\m\a.[\a]R_1$,
then $R$ and $[\a]R_1$ have no residuals. Besides these afore-said cases, if $R< U\leq M$,
then we obtain the residual of $U$, if we execute the redex $R$ in $U$. When $U< R$ and $U$ has a residual:
if $R=(\m\a. R_1)R_2$ and $U\leq R_1$, we obtain the residual by recursively exchanging every subterm $[\a]P$
of $U$ by $[\a](P\;R_2)$. If $U\leq R_2$, then the residual is $U$ only its index changes in
$M'$. Otherwise, for $U\leq R_1$ and $R=[\a]\m\b. R_1$, the residual is $U[\b:=\a]$. If $U\leq R_1$ and $R=\m\a.[\a]R_1$,
then the residual is $U$ just the index is modified in $M'$.
\end{enumerate}
\item Residuals of terms with respect to reduction sequences are defined in a recursive way: we obtain the residuals with respect
to a reduction sequence if we compute the residuals of the residuals with respect to subsequences of the reduction sequence.
\item Let $\si$ be the reduction sequence $M_1\ra^{R_1}M_2\ra^{R_2}\ldots\ra^{R_n} M_{n+1}$, assume $R\leq M_1$ is a redex.
Then we say that $R$ is involved in $\si$, if there is an $1\leq i\leq n$ such that $R=R_i$ and $R_i$ is a residual of $R$
with respect to $M_1\ra^{R_1}\ldots\ra^{R_{i-1}}M_{i}$.
\end{enumerate}
\end{defi}

In what follows, most of the proofs will follow an induction on lexicographically ordered tuples of integers.
Ordering of tuples is understood in the usual lexicographic manner: $(n,m)\leq (n',m')$ iff either $n<n'$
or $n=n'$ and $m\leq m'$.

\section{Standardization for the \texorpdfstring{$\l\m\rho\th$}{lambda-mu-rho-theta}-calculus}

Our results concerning the standardization of the $\lm$-calculus are not the strongest ones. In fact, some of our statements
are valid for the $\lm$I-calculus only. A standardization result can be found in the paper of Saurin \cite{Sau}, where, besides
the rules mentioned in our article, some other rules are taken into account. Our only concern with the standardization is our aim
to find an upper bound for the reduction sequences of the $\lm$-calculus.
In the present subsection we define a notion of a standard reduction sequence for the $\lmrt$-calculus and find some assertions
concerning their lengths.
Many of the proofs are adaptations of the ones related to the simply typed $\l$-calculus in
\cite{Xi}. The result itself, however, is not a simple generalization of Xi's method.
In the presence of $\m$-, $\rho$- and $\theta$-reductions overlapping redexes mean the greatest obstacle to a straightforward formulation of standardness.
We suggest the following solution to this problem. We define the notion of a standard reduction sequence such that every standard reduction sequence should
obey
the following properties: when a redex, which is simultaneously a $\m$- and a $\theta$-redex, is involved in a standard reduction sequence,
 we stipulate that the redex should be understood only as a $\m$-redex. Likewise, when a $\theta$-redex
would destroy a containing $\rho$-redex we prohibit reducing the $\theta$-redex, and when a $\rho$-redex would make
a containing $\th$-redex disappear, we forbid the $\rho$-redex until the $\theta$-redex exists.
These raise additional issues in the estimation of the lengths of standard reduction sequences:
we must take into account the numbers of arguments of such $\th$-redexes that are simultaneously $\m$-redexes and we must exclude some reduction sequences
from the set of standard reduction sequences in order to deal with the overlapping $\rho$- and $\theta$-redexes.
These considerations are reflected in the definition of a standard reduction sequence and in the measure for a term presented
in Definition \ref{ch2:mr}.
We show that our suggestion for a solution
is appropriate: we can majorize every reduction sequence
by a standard reduction sequence of Definition \ref{ch2:stlmrt}.

We should remark that the widely known and intuitive definition requires of a
standard reduction sequence that no redex is a residual of a redex which lies to the left
of some other redex in the sequence \cite{Baren}. Instead of this, we
use a definition of a standard reduction sequence similar to the one applied in \cite{Dav},
which enables us to prove properties concerning standard reduction sequences by induction
on the complexity of terms.

\subsection{Standard reduction sequences in the \texorpdfstring{$\lmrt$}{lambda-mu-rho-theta}-calculus}

In this subsection we define the notion of a standard $\bmrt$-reduction sequence and present some elementary
lemmas concerning properties of standard reduction sequences. In the definition below, we clarify what we mean by a standard $\bmrt$-reduction sequence.
The definition is structured by induction on the lexicographically ordered pair $(|\si|,comp(M))$.
\begin{defi} \label{ch2:stlmrt}
A reduction sequence $M\fl^{\si}N$ is standard if, either it is empty, or one of the following cases holds.
\begin{enumerate}
\item $M=\l x.M_1$, $N=\l x.N_1$, $M_1\fl^{\si}N_1$ and $\si$ is standard.
\item If $M=\m\a. M_1$, let $M\fl^{\si}N$ be $M=P_1\ra P_2\ra\ldots\ra P_{k+1}=N$.
\begin{enumerate}
\item Either $N=\m\a. N_1$, $M_1\fl^{\si}N_1$ and $\si$ is standard and none of $P_j$ is a $\th$-redex $(1\leq j\leq k+1)$,
\item or let $M\fl^{\si'}\m\a.[\a]M'=P_j$ such that $P_j$ is the first term in the sequence which is a $\th$-redex and
$\si'$ is standard and
\begin{enumerate}
\item either $\m\a.[\a]M'=P_j\ra_{\th}M'\fl^{\si''}N$,
\item or $\m\a.[\a]M'=P_j\fl^{\si'''}\m\a.[\a]N'$ such that $M'\fl^{\si'''}N'$,
\end{enumerate}
where $\si''$ and $\si'''$ are standard.
\end{enumerate}
\item If $M=[\a]M_1$, let $M\fl^{\si}N$ be $M=P_1\ra P_2\ra\ldots\ra P_{k+1}=N$.
\begin{enumerate}
\item Either $N=[\a]N_1$, $M_1\fl^{\si}N_1$ and $\si$ is standard and none of $P_j$ is a $\rho$-redex $(1\leq j\leq k+1)$,
\item or let $M\fl^{\si'}[\a]\m\b. M'=P_j$ such that $P_j$ is the first term in the sequence which is a $\rho$-redex
and $\si'$ is standard and
\begin{enumerate}
\item either $[\a]\m\b. M'=P_j\ra_{\rho}M'[\b:=\a]\fl^{\si''}N$,
\item or $[\a]\m\b. M'=P_j\fl^{\si'''}[\a]\m\b. N'$
such that $M'\fl^{\si'''}N'$,
\end{enumerate}
where $\si''$ and $\si'''$ are standard.
\end{enumerate}
\item $M=(\l x.M_1) M_2 \ldots M_n$ and
\begin{enumerate}
\item either $M\ra_{\b}(M_1[x:=M_2]\dots M_n)\fl^{\si_1}N$ and $\si_1$ is standard,
\item or $M\fl^{\si_1}(\l x.N_1)M_2\dots M_n\fl^{\si_2}(\l x.N_1) N_2\dots M_n\fl^{\si_3}\ldots \fl^{\si_n}$\\
$(\l x.N_1) N_2\ldots N_n=N$ and $\si_i$ $(1\leq i\leq n)$ are standard.
\end{enumerate}
\item $M=(\m \a. M_1) M_2\ldots M_n$ and
\begin{enumerate}
\item either $M\ra_{\m}(\m\a. M_1[\a:=_rM_2]\ldots M_n)\fl^{\si_1}N$ and $\si_1$ is standard,
\item or $M\fl^{\si_1}(\m \a. N_1)M_2\ldots M_n\fl^{\si_2}(\m\a. N_1)N_2\ldots M_n \fl^{\si_3}\ldots \fl^{\si_n}$\\
$(\m \a. N_1) N_2\ldots N_n =N$ and $\si_i$ $(1\leq i\leq n)$ are standard.
\end{enumerate}
\item $M=(x\;M_1\ldots M_n)$,
$M=(x\;M_1\ldots M_n)\fl^{\si_1}(x\;N_1\ldots M_n)\fl^{\si_2}\ldots\fl^{\si_n}$\\
$(x\;N_1\;N_2\ldots N_n)=N$
and $\si_i$ $(1\leq i\leq n)$ are standard.
\end{enumerate}

In the rest of this paper, we may treat a reduction sequence $\si$ as a list
of the terms in $\si$ or sometimes as the list of the redex occurrences of the
reduction sequence. In accordance with this, given a standard reduction sequence
$M_1\ra^{R_1}M_2\ra^{R_2}\ldots\ra^{R_n} M_{n+1}$, we may say that the sequence $M_1,\ldots ,M_{n+1}$
is standard (the redex occurrences are implicitly understood in $M_i$), or we may talk about the same
thing by just saying that the sequence $\si=[R_1,\ldots,R_n]$ is standard. In notation: $\si\in St$.
\end{defi}

We illustrate some of the difficulties in the example below, when we want to assert statements about standard reduction sequences.

\begin{exa}
Let $M=(\m\a.[\a]\l u.(\m \b.[\b]\l y.x)[\a]x)x$. Then, if we choose the $\theta$-redex $\m \b.[\b]\l y.x$,
we obtain $M \ra_{\th}(\m\a.[\a]\l u.(\l y.x)[\a]x)x$, and,
since we are not allowed to reduce the redex $(\l y.x)[\a]x$, there are no more reductions provided we
restrict ourselves to standard ones. On the other hand, if we choose the $\m$-redex $(\m\b.[\b]\l y.x)[\a]x$,
then $M \ra_{\m} (\m\a.[\a]\l u.\m \b.[\b](\l y.x)[\a]x)x \ra_{\b} (\m\a.[\a]\l u.\m \b.[\b]x)x$\\
$\ra_{\mu}\m\a.[\a](\l u.\m \b.[\b]x)x\ra_{\theta} (\l u.\m \b.[\b]x)x \ra_{\b}\m \b[\b]x \ra_{\th}x$ is standard.
\end{exa}

The above definition prevents standard reduction sequences from having overlapping $\th$- and $\rho$-redexes that could eliminate each other.
Moreover, our definition of standardness is such that it gives rise to the following distinction between standard reduction sequences,
at least in the case of the $\lm$I-calculus:
given a $\lm$I-term, if the head redex exists, then a standard reduction sequence either begins with the head redex or the head redex
has a unique residual in the resulting term, which is the head redex of the result itself. This will be demonstrated in Lemmas \ref{ch2:hdrdxlem} and \ref{ch2:invhdrdx}.
On the other hand, in the general case, the situation is a little more complicated as Examples \ref{ch2:exlmi1} and \ref{ch2:exlmi2} show. Example \ref{ch2:exlmi1}
even demonstrates that, in the general case, in the presence of the $\th$-rule, a standard reduction sequence is not necessarily left to right, in contrast with
the case of the $\l$-calculus.

Our aim in this section is to obtain a standardization theorem for the $\lmrt$-calculus,
together with an upper bound on the lengths of the standard reduction sequences.
To this end, we state and prove some auxiliary propositions first concerning the behaviour
of standard reduction sequences and then we present some lemmas providing upper bounds for the lengths
of reduction sequences starting from terms obtained as the results of substitutions.

We state our first theorem saying that left-most reduction sequences are special cases of standard reduction sequences.

\begin{thm}
Every leftmost reduction sequence is standard.
\end{thm}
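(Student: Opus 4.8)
The plan is to prove that every leftmost reduction sequence $M\fl^\si N$ is standard by induction on the lexicographically ordered pair $(|\si|,comp(M))$, exactly matching the structure on which Definition~\ref{ch2:stlmrt} is built. The empty case is trivial, so I would assume $\si=[R_1]\#\widehat\si$ is nonempty with $R_1=lr(M)$, and then proceed by cases according to the shape of $M$ as supplied by Lemma~\ref{ch2:repr}. First I would handle the ``prefix'' cases. If $M=\l x.M_1$, then $lr(M)=lr(M_1)$ and $R_1$ together with all subsequent leftmost redexes act inside the body; the whole sequence is a leftmost reduction $M_1\fl^\si N_1$ with $N=\l x.N_1$, so the induction hypothesis (same $|\si|$, smaller $comp$) gives that $\si$ is standard for $M_1$, which is clause~1 of the definition.

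For $M=\m\a.M_1$ I would split on whether $M$ is a $\th$-redex. If it is not, then $lr(M)=lr(M_1)$ and I would follow the leftmost sequence; either no intermediate term ever becomes a $\th$-redex, in which case the IH applied to $M_1\fl^\si N_1$ gives case~2(a); or some $P_j$ is the first term that is a $\th$-redex, and I would split $\si=\si'\#\widehat{\si'}$ at that point. The prefix $M\fl^{\si'}P_j$ is a shorter leftmost sequence, hence standard by the IH, and its next step is either the $\th$-contraction (giving 2(b)i) or a step inside the body (giving 2(b)ii, where the residual of the head $\th$-structure is preserved because the leftmost redex lies inside $M'$); each tail is again a shorter leftmost sequence, so standard by IH. If $M$ itself is already a $\th$-redex, then by Definition~\ref{ch2:lmredex} $lr(M)=\m\a.M_1=P_1$ is the $\th$-redex, so the split happens at $j=1$ with $\si'$ empty and the result is immediate. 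The case $M=[\a]M_1$ is symmetric, using $\rho$-redexes and clause~3.

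The remaining cases are the ``head'' cases. If $M=(\l x.M_1)M_2\ora P$ then $lr(M)=(\l x.M_1)M_2$ is a $\b$-redex, so $R_1$ is the contraction of it and $\si$ starts with this step; the tail $(M_1[x{:=}M_2]\,\ora P)\fl^{\widehat\si}N$ is a shorter leftmost sequence, standard by IH, which is clause~4(a). The $\m$-redex head case $M=(\m\a.M_1)M_2\ora P$ is identical, giving clause~5(a) (here one should note that if $(\m\a.M_1)$ is also a $\th$-redex, $lr$ still selects the $\m$-redex, matching the stipulation in the definition). Finally, if $M=(x\,M_1\ldots M_n)$, then $lr(M)=lr(M_i)$ for the first $M_i\notin NF$, and the whole leftmost sequence works its way left to right through the arguments; decomposing $\si$ according to which argument is active yields the $\nu_i$'s, each a shorter leftmost sequence inside $M_i$ (smaller $comp$), hence standard by IH, matching clause~6. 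The main obstacle I anticipate is bookkeeping in case~2(b)/3(b): one must verify carefully that once a term becomes a $\th$-redex (resp.\ $\rho$-redex), the leftmost strategy does contract it as soon as it would be the leftmost redex, and that while it is not yet contracted every leftmost step stays inside the body so that the outer $\th$- (resp.\ $\rho$-) structure has a unique residual throughout $\si'''$; this is a routine but delicate comparison of the clauses of Definitions~\ref{ch2:hdrdx}, \ref{ch2:lmredex}, and \ref{ch2:stlmrt}. No genuinely hard inequality or construction is needed; the content is entirely in making the case analysis line up with the inductive structure of the definition of standardness.
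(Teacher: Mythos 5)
Your proposal is correct and takes essentially the same route the paper intends: the paper's own proof is literally ``Immediate from Definitions \ref{ch2:lmredex} and \ref{ch2:stlmrt},'' and your induction on $(|\si|,comp(M))$ with the case analysis matching the clauses of $lr$ to the clauses of standardness is exactly the expansion of that remark. The only minor over-inclusion is in your cases 2(b)/3(b): once a leftmost sequence reaches the first $\th$- (resp.\ $\rho$-) redex, Definition \ref{ch2:lmredex} forces the very next step to contract that redex, so the branch with a nonempty $\si'''$ staying inside the body never arises and the residual bookkeeping you flag as the main obstacle is in fact vacuous here.
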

\begin{proof}
Immediate from Definitions \ref{ch2:lmredex} and \ref{ch2:stlmrt}.
\end{proof}

The following lemma states that a reduction sequence, which consists of a head reduction sequence followed
by a standard reduction sequence is itself standard.

\begin{lem} \label{ch2:sthdrdx}
Let $M\fl^{\si'}M'\fl^{\si''}M''$ such that $\si'$ is a
head-reduction sequence and $\si''$ is standard. Then $\si =\si'\#
\si''$ is standard.
\end{lem}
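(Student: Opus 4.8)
The plan is to argue by induction on the lexicographically ordered pair $(|\si|, comp(M))$, mirroring the structure of Definition \ref{ch2:stlmrt}. If $\si'$ is empty the claim is immediate, so assume $\si'=[R]\#\widehat{\si'}$ where $R=hr(M)$, and let $M\ra^R M_2$. Then $\widehat{\si'}$ is still a head-reduction sequence, $M_2\fl^{\widehat{\si'}}M'\fl^{\si''}M''$, and by the induction hypothesis (the total length has dropped by one) $\widehat{\si'}\#\si''$ is standard. So it suffices to check, case by case on the shape of $M$ given by Lemma \ref{ch2:repr}, that prepending the single head redex $R$ to the already-standard sequence $M_2\fl^{\widehat{\si'}\#\si''}M''$ yields a standard sequence according to Definition \ref{ch2:stlmrt}.

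First I would dispose of the cases where $M$ itself is the head redex, i.e. $M=(\l x.M_1)M_2\ora P$ or $M=(\m\a. M_1)M_2\ora P$ or $M=[\a]\m\b. M_1$ or $M=\m\a.[\a]M_1$ with $\a\notin Fv(M_1)$. In each of these, $hr(M)=M$ reduces $M$ ``at the root'', so the relevant clause of Definition \ref{ch2:stlmrt} (clauses 4(a), 5(a), 3(b)i, 2(b)i respectively — note that for the $\th/\m$ overlap $hr$ chooses the $\m$-redex, matching the convention built into the definition) applies directly with $\si_1$ (resp.\ $\si''$) taken to be the already-standard tail $\widehat{\si'}\#\si''$. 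The remaining cases are those where $R=hr(M)$ lies strictly inside $M$: $M=\l x.M_1$ (with $R=hr(M_1)$), $M=\m\a. M_1$ not a $\th$-redex (with $R=hr(M_1)$), $M=[\a]M_1$ not a $\rho$-redex (with $R=hr(M_1)$), and $M=(x\;M_1\ora P)$ (with $R=hr(M_1)$, the head redex sitting in the first argument $M_1$). Here I peel off the outer constructor: writing $M=\l x.M_1$, the hypothesis gives a head reduction $M_1\fl M_1'$ followed by a standard $M_1'\fl N_1$, and by the induction hypothesis on $comp$ (same length $|\si|$, smaller complexity) $M_1\fl N_1$ is standard, so $M=\l x.M_1\fl \l x.N_1=N$ is standard by clause 1. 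The $\m$- and $[\a]$-cases are handled the same way using clauses 2(a) and 3(a); one must check that no intermediate term along the way is a $\th$- resp.\ $\rho$-redex, but that is exactly guaranteed by the head reduction not touching the outer $\m\a$ / $[\a]$ prefix together with the inductively obtained standardness of the tail. For $M=(x\;M_1\ora P)$ one uses clause 6, with the head redex reducing inside $M_1$ first.

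The main obstacle I anticipate is the bookkeeping in clause 2(b) and 3(b): when $M=\m\a. M_1$ is not yet a $\th$-redex but becomes one somewhere along $\widehat{\si'}$, I need to make sure that prepending $R=hr(M_1)$ respects the ``first $\th$-redex'' structure of clause 2(b) — concretely, that the term $\m\a.[\a]M'=P_j$ and the split $\si',\si'',\si'''$ supplied by the induction hypothesis remain valid after shifting the index by one, which is routine but must be stated carefully. A second subtlety worth flagging: the pair $(|\si|,comp(M))$ must genuinely decrease in the right lexicographic sense in each recursive appeal — it strictly decreases in the first coordinate when we consume the head redex $R$, and in the second coordinate (with the first unchanged) when we strip an outer constructor — so the induction is well-founded. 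Beyond these points the proof is a direct, if somewhat tedious, matching of cases between Definition \ref{ch2:hdrdx} and Definition \ref{ch2:stlmrt}.
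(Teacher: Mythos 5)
Your proof is correct and follows essentially the same route as the paper's: induction on a lexicographic pair of (length, complexity), reduction to the case of a single head step prepended to an already standard sequence, and a case analysis matching Definition \ref{ch2:hdrdx} against the clauses of Definition \ref{ch2:stlmrt}. One small slip worth noting: for $M=(x\;M_1\ora{P})$ the function $hr$ is \emph{undefined} (you have conflated it with $lr$ from Definition \ref{ch2:lmredex}), so a head-reduction sequence starting from such a term is necessarily empty and that case is vacuous rather than something to be discharged via clause 6.
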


\begin{proof}
Let $\si'=[R]\# \nu$. We prove the result by induction on $(|\si'| , comp(M))$, taking into account the various points of
Definition \ref{ch2:hdrdx}.
Assume $|\nu|=0$. We deal with two of the cases only.
\begin{enumerate}
\item $M=[\a]M_1$.
\begin{enumerate}
\item Assume $M$ is a $\rho$-redex, then $M_1 = \mu \b. M_2$ and $M\ra_{\rho}M_2[\b:=\a]\fl^{\si''}M''$
is standard by point 3 of Definition \ref{ch2:stlmrt}.
\item If $R\leq M_1$, then the induction hypothesis applies.
\end{enumerate}
\item $M=(\m \a. M_1)M_2\dots M_n$. In this case the head redex of $M$ is $(\m\a. M_1)M_2$.
Thus $M\ra_{\m}(\m\a. M_1[\a:=_rM_2]\dots M_n)\fl^{\si''}M''$ and $\si$ is standard by point 5 of Definition
\ref{ch2:stlmrt}.
\end{enumerate}
The cases when $|\nu|>0$ follow from the induction hypothesis.
\end{proof}

The lemma below is a technical one, it will be useful for verifying that, if we are given terms $M$, $M'$ and $N$, $N'$ such that
$M\fl_{st}M'$ and $N\fl_{st}N'$, then it is also true that $M[x:=N]\fl_{st}M'[x:=N']$ and
$M[\a:=_rN]\fl_{st}M'[\a:=_rN']$, respectively.

\newpage
\begin{lem} \label{ch2:sthdrdx2}\leavevmode

\begin{enumerate}
\item Let $M=(M_1\;M_2)\fl^{\si}(\m\a. M_3)M_2\ra_{\mu}(\m\a. M_3[\a:=M_2])\fl^{\nu}N$
such that $\si$, $\nu\in St$ and suppose $\m\a. M_3$ is the first term of the reduction sequence $M_1\fl^{\si} \m\a. M_3$
of the form $\m \b. P$.
Then $\xi=\si\#[(\m\a. M_3)M_2]\#\nu$ is standard.
\item Let $M=(M_1\;M_2)\fl^{\si}(\l x.M_3)M_2\ra_{\mu}M_3[x:=M_2]\fl^{\nu}N$ such that $\si$, $\nu\in St$
and suppose $\l x.M_3$ is the first term of the reduction sequence  $M_1 \fl^{\si} \l x.M_3$
of the form $\l y.P$. Then $\xi=\si\#[(\l x.M_3)M_2]\#\nu$ is standard.
\end{enumerate}
\end{lem}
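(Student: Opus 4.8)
The plan is to prove both parts simultaneously by induction on $(|\si|,comp(M_1))$, closely mirroring the structure of the proof of Lemma~\ref{ch2:sthdrdx}, but now tracking the fact that the reduction of the outermost $\m$- (resp. $\b$-) redex happens only \emph{after} a standard prefix $\si$ acting inside $M_1$. The two parts are nearly identical in shape, so I would write the argument for part~(1) and indicate the (easier) modifications for part~(2); the key structural fact used throughout is Lemma~\ref{ch2:repr}, which tells us the possible shapes of $M_1$, together with the observation that $\m\a. M_3$ (resp. $\l x.M_3$) is the \emph{first} term of $M_1\fl^\si\m\a. M_3$ of the form $\m\b.P$ (resp. $\l y.P$). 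This ``firstness'' hypothesis is exactly what allows us to match the reduction sequence $\xi$ against one of the clauses of Definition~\ref{ch2:stlmrt}: it guarantees that along $\si$ the head of the application $(M_1\;M_2)$ never prematurely becomes a $\m$- or $\l$-abstraction, so $\si$ can be ``lifted'' to act on the application, and then the $\m$-reduction of $(\m\a. M_3)M_2$ is precisely the head-contraction step in clause~5 (resp.~4).

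The first step is to dispose of the base case $|\si|=0$: then $M_1=\m\a. M_3$ already, and $\xi=[(\m\a. M_3)M_2]\#\nu$ is standard directly by point~5(a) of Definition~\ref{ch2:stlmrt} (the $\m$-redex is contracted first, and $\nu$ is standard by hypothesis); symmetrically, part~(2) with $|\si|=0$ gives point~4(a). For the inductive step, write $\si=[R]\#\widehat\si$ and analyze where the first redex $R$ lies, using the shape of $M_1$ from Lemma~\ref{ch2:repr}. The case I expect to matter is when $M_1$ is itself an application, $M_1=(P_1\;\ora Q)$: then $R$ is a redex inside $M_1$, hence inside $(M_1\;M_2)=(P_1\;\ora Q\;M_2)$, and since $\m\a. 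M_3$ is the \emph{first} $\m$/$\l$-abstraction appearing in $M_1\fl^\si\m\a. M_3$, the head $P_1$ is not (yet) an abstraction along $\widehat\si$ until the very last moment; thus $R$ is either inside one of the components $P_i$ or $Q_j$, and we can apply the induction hypothesis to the shorter sequence $\widehat\si\#[(\m\a. M_3)M_2]\#\nu$ on the smaller term obtained after contracting $R$, then prepend $R$ using whichever of clauses 4/5/6 of Definition~\ref{ch2:stlmrt} matches the position of $R$. The cases where $M_1$ starts with a $\l$-, $\m$-, or $[\a]$-prefix reduce, via clauses 1--3 of Definition~\ref{ch2:stlmrt}, to the inductive hypothesis on $comp(M_1)$, exactly as in the proof of Lemma~\ref{ch2:sthdrdx}; the $\th$- and $\rho$-redex subtleties do not arise here because $M_1$ must eventually reach the form $\m\a. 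M_3$ (resp. $\l x.M_3$), an abstraction, so no $\th$/$\rho$-redex can sit at its head.

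The main obstacle I anticipate is bookkeeping the ``firstness'' hypothesis correctly through the induction: after contracting $R$ inside $M_1$ to get $M_1'$, one must check that $\m\a. M_3$ is still the first term of the form $\m\b.P$ in the residual reduction $M_1'\fl^{\widehat\si}\m\a. M_3$ — i.e. that contracting $R$ did not create an abstraction at the head of $M_1'$. This is true because $R$ is a \emph{proper} subterm of $M_1$ (if $R=M_1$ itself, then $M_1$ is a redex with an abstraction at its head, contradicting that $\m\a. M_3$, an abstraction, is the \emph{first} such term unless $|\si|\le 1$, a case handled directly), so contracting $R$ does not change the outermost constructor of $M_1$; hence the hypothesis is preserved and the induction goes through. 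A secondary, purely notational nuisance is that clauses 4(b) and 5(b) of Definition~\ref{ch2:stlmrt} decompose a reduction on an application into a left-to-right cascade $\si_1\#\cdots\#\si_n$; one must observe that ``$\si$ acts only inside $M_1$'' means all the action is concentrated in the first block, with the later blocks empty, so the match with clause~5(b) (followed by the head $\m$-step, which is clause~5(a) applied to the tail) is automatic. Once these points are made precise, parts (1) and (2) follow by the same template, with part~(2) using clause~4 in place of clause~5 and the $\b$-substitution $M_3[x:=M_2]$ in place of the $\m$-substitution $M_3[\a:=_rM_2]$.
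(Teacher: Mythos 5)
Your induction measure and base case ($|\si|=0$, giving $\xi=[(\m\a.M_3)M_2]\#\nu$ standard by clause 5(a)) agree with the paper, but the main inductive case rests on a misreading of the ``firstness'' hypothesis, and the resulting case analysis is wrong. That hypothesis says that no term $T_i$ in the sequence $M_1=T_1\ra\cdots\ra T_{k+1}=\m\a.M_3$ other than the last is itself of the form $\m\b.P$; it says nothing about the \emph{heads} of the $T_i$. From it you infer that ``the head $P_1$ is not (yet) an abstraction along $\widehat\si$'' and hence that the first redex $R$ of $\si$ sits inside a proper component of $M_1$. This fails in exactly the cases that matter: by Lemma~\ref{ch2:repr}, for $M_1$ to reduce to a $\m$-abstraction at all it must have the shape $(\l x.P)P'\ora{Q}$ or $(\m\g.P)P'\ora{Q}$, i.e.\ an application whose head \emph{is} an abstraction from the outset (for instance $M_1=(\m\g.[\g]x)\,y$ is not of the form $\m\b.P$, so firstness is not violated, yet its head is a $\m$-abstraction and the head redex is the only redex available). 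In these cases the head redex of $M$ must be contracted somewhere in $\si$ --- under clauses 4(b)/5(b) the end term would still have the shape $(\l x.N_1)N_2\ldots N_n$ resp.\ $(\m\g.N_1)N_2\ldots N_n$ with all its arguments, which is incompatible with $\si$ ending at $(\m\a.M_3)M_2$ --- and standardness then forces option (a): $\si=[hd(M)]\#\si'$. The correct inductive step, and the one the paper takes, is to peel off this head step via 4(a)/5(a) and apply the induction hypothesis to $\si'$; there is no non-vacuous case in which $R$ lies inside a proper component.

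Two subsidiary points compound the problem. First, your fallback ``match with clause 5(b), followed by the head $\m$-step as 5(a) applied to the tail'' is not a legitimate certificate of standardness: clause 5 of Definition~\ref{ch2:stlmrt} is an exclusive alternative --- either the head redex is contracted first, or it is never contracted --- and a sequence that first reduces inside the components and only afterwards contracts the head redex matches neither branch; excluding such sequences is precisely the point of the definition. Second, the parenthetical dismissal of the case $R=M_1$ is unsound: $(\m\g.P)Q$ is a redex whose head is an abstraction but which is not itself of the form $\m\b.P$, so no contradiction with firstness arises, and this is in fact the generic situation rather than a degenerate one.
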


\begin{proof}
We deal only with case 1. We examine some of the interesting cases.
The proof goes by induction on $(|\si|,comp(M))$ taking into account the various points of Definition \ref{ch2:stlmrt}.
\begin{enumerate}
\item If $\si$ is standard by virtue of point 2 of Definition \ref{ch2:stlmrt}, that is, $M=\m\a. M_1$,
the only possibility is when $R= \m\a.[\a]R_1$ is the first $\th$-redex in the sequence.
But then $R$ is in fact $M$, and $[R]\#\nu$ is standard by definition.
\item Let $\si$ be standard by reason of point 5 of Definition \ref{ch2:stlmrt}.
By assumption, the only possibility is $M=(\m \a. P_1)P_2\ldots P_n$ and $\si=[(\m \a. P_1)P_2]\#\si'$.
The induction hypothesis can be applied to $\si'$ and $M'=(\m \a. P_1[\a:=_rP_2]\ldots P_n)$.\qedhere
\end{enumerate}
\end{proof}

The following lemma gives us some information on the form of a term which is a reduct obtained
by a standard reduction sequence.

\begin{lem}\label{ch2:dcm}
Let $M \fl^{\si}M'$ such that $\si$ is standard. Assume the
head-redex $hd(M)$ of $M$, if it exists, is not involved in $\si$.
Then the following statements are true.
\begin{enumerate}
\item If $M=\l x.M_1$, then $M'=\l x.M_1'$, where $M_1\fl^{\si}M_1'$.
\item If $M=[\a]M_1$, then $M'=[\a]M_1'$, where $M_1\fl^{\si}M_1'$.
\item
If $M=(M_1 \dots M_n)$, then there are standard $\si_1,\dots ,\si_n$ and terms $M_1',\dots ,M_n'$ such
that $M_i\fl^{\si_i}M_i'$ $(1\leq i\leq n)$, $M'=(M_1'\dots M_n')$ and $\si =\si_1 \# \dots \# \si_n$.
\item If $M = \m\a. M_1$ is a $\l\m$I-term, then $M'= \m\a. M_1'$, where $M_1\fl^{\si}M_1'$.
\end{enumerate}
\end{lem}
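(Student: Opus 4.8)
\textbf{Proof proposal for Lemma \ref{ch2:dcm}.}

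The plan is to argue by induction on the lexicographically ordered pair $(|\si|,comp(M))$, following the structure of Definition \ref{ch2:stlmrt}, and to treat the four cases of the statement in parallel since they share the same mechanism. If $\si$ is empty all assertions are trivial, so assume $\si=[R]\#\widehat{\si}$. The crucial observation is this: the hypothesis that $hd(M)$ is not involved in $\si$ rules out exactly those clauses of Definition \ref{ch2:stlmrt} in which the reduction sequence ``opens'' the outermost constructor (the $\b$- or $\m$-contraction in clauses 4(a) and 5(a), the $\rho$-contraction in clause 3(b)i, and the $\th$-contraction in clause 2(b)i). Indeed, in each of those clauses the first redex contracted along a certain initial segment is the head redex of $M$ — or becomes, via Lemma \ref{ch2:sthdrdx2} or an inspection of residuals, a residual that is still the head redex — contradicting the assumption. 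Hence only the ``constructor-preserving'' clauses can apply, and these are precisely the ones that give the stated shape of $M'$.

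Concretely, for item 1, if $M=\l x.M_1$ then only clause 1 of Definition \ref{ch2:stlmrt} is available, giving $M'=\l x.M_1'$ with $M_1\fl^{\si}N_1=M_1'$ and $\si$ standard. For item 2, if $M=[\a]M_1$ we must exclude $M$ being a $\rho$-redex and, even when it is not, exclude the subcase where some $P_j$ becomes a $\rho$-redex and is contracted (clause 3(b)i); but such a contraction is a head reduction of the corresponding $P_j$, and tracing residuals back shows it would be (the residual of) $hd(M)$, which is excluded; so clause 3(a) applies and $M'=[\a]M_1'$. For item 4, with $M=\m\a. M_1$ a $\l\m$I-term, the analogous argument excludes clause 2(b)i (the $\th$-contraction); here the $\l\m$I-hypothesis is what guarantees $\a\in Fv(M_1)$ is preserved under reduction so that no new $\th$-redex of the form $\m\a.[\a]M'$ with $\a\notin Fv(M')$ can arise, forcing clause 2(a) and $M'=\m\a. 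M_1'$. For item 3, $M=(M_1\dots M_n)$: if $M_1$ is not an abstraction, clause 6 (when $M_1$ is a variable-headed application, unfolding to $M=(x\;M_1'\dots)$) or the general decomposition into standard reductions of the components applies directly; if $M_1=\l x.P$ or $M_1=\m\a. P$, then clauses 4 and 5 are in force, and the hypothesis that $hd(M)=(M_1)M_2$ is not involved excludes subcases (a), leaving subcases (b), which say exactly that $\si$ factors as $\si_1\#\dots\#\si_n$ with $M_i\fl^{\si_i}M_i'$ and $M'=(M_1'\dots M_n')$ — here one also invokes Lemma \ref{ch2:dcm}(1),(2),(4) inductively, or Lemma \ref{ch2:sthdrdx2}, to see that $M_1'$ is still an abstraction of the same kind so that the head redex never reappears and gets reduced.

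The main obstacle I expect is the residual bookkeeping in item 3 and, relatedly, making precise the claim ``the first redex contracted in the relevant initial segment is a residual of $hd(M)$.'' One has to check that along the prefixes described in clauses 4(b) and 5(b), reducing inside the $M_i$'s (for $i\geq 2$) and inside $M_1$ does not create, and then contract, a residual of the head redex before the sequence ends — i.e. that the ``opening'' $\b$/$\m$ step is genuinely absent rather than merely postponed. This is where the precise definition of residual from the earlier definition, together with the fact that in the $\l\m$I-calculus no subterm disappears, does the work: the head redex $hd(M)$ has, throughout any reduction not contracting it, a unique residual which is still the head redex of the current term (this is essentially the content of Lemmas \ref{ch2:hdrdxlem} and \ref{ch2:invhdrdx}, to which the text alludes), so ``not involved in $\si$'' propagates down the induction and forbids the constructor-opening clauses at every stage. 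Once this propagation is set up cleanly, each of the four items falls out by reading off the surviving clause of Definition \ref{ch2:stlmrt}.
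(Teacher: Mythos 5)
Your proposal is correct and follows essentially the same route as the paper's proof: induction on $(|\si|,comp(M))$ with a case analysis on Definition \ref{ch2:stlmrt}, observing that the hypothesis that $hd(M)$ is not involved in $\si$ forbids the constructor-opening clauses (4(a), 5(a), 2(b)i, 3(b)i), and using the $\l\m$I-hypothesis in item 4 exactly as the paper does to rule out the creation of a $\th$-redex. One caution: your parenthetical appeal to Lemmas \ref{ch2:hdrdxlem} and \ref{ch2:invhdrdx} would be circular, since the paper derives those lemmas \emph{from} this one (and they hold only for $\l\m$I-terms, while items 1--3 here are general); fortunately your argument does not actually rely on them.
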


\begin{proof}
By induction on $(|\si| , comp(M))$. We consider some of the typical cases.
\begin{enumerate}
\item $M=(\m \a. M_1)M_2\ldots M_n$. Since the head redex $(\m\a. M_1)M_2$ is not involved in $\si$,
point 5 of Definition \ref{ch2:stlmrt} yields the result.
\item $M=\m\a. M_1$. If $M=\m\a[\a]M_2$ is a $\th$-redex,
then, since $M$ is not involved in $\si$, point 2 of Definition \ref{ch2:stlmrt} yields that $M_2\fl^{\si}N_2$
such that $N=\m\a.[\a]N_2$. Assume now $M$ is not a $\th$-redex.
Thus $M$ is either not of the form $\m\a.[\a]M_2$ such that $\a\notin Fv(M_2)$ or $M=\m\a.[\a]M_2$ and $\a\in Fv(M_2)$.
Then $hd(M)=hd(M_1)$ and, applying the induction hypothesis to $M_1$, it is straightforward to check
that either $M$ cannot reduce to a term of the form $\m\a.[\a]M''$, or $M\fl \m\a.[\a]M''$ and $\a\in Fv(M'')$.
Again, by point 2 of Definition \ref{ch2:stlmrt} we obtain the result.\qedhere
\end{enumerate}
\end{proof}

We remark that the assumption of $M$ being a $\lm$I-term is crucial in Case 4 of
Lemma \ref{ch2:dcm} as the following example shows.

\begin{exa}\label{ch2:exlmi}
If $M=\m\a.[\a](\l x.x)((\l y.x)[\a]x)$, then $hd(M)=(\l x.x)((\l y.x)\;[\a]x)$.
Consider the standard reduction sequence
$\si$: $M\ra_\b \m\a.[\a](\l x.x)x \ra_\theta (\l x.x)x=M'$.
Then $hd(M)$ is not involved in $\si$, on the other hand, $M'$ is not of the form $\m\a. M_1'$.
\end{exa}

In the next two lemmas our common assumption is that $M$ is a $\l\m$I-term. The lemmas will serve as auxiliary
statements when we prove that a standard normalizing reduction sequence is unique in the case of $\l\m$I-terms.

\begin{lem} \label{ch2:hdrdxlem}
Let $M$ be a $\l\m$I-term. If $M\fl^{\si}M'$ is standard such that the head-redex $hd(M)$ of $M$ exists and
is not involved in $\si$, then the head-redex $hd(M')$ of $M'$ exists and it is the unique residual of $hd(M)$
with respect to $\si$.
\end{lem}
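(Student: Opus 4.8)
The plan is to proceed by induction on the lexicographically ordered pair $(|\si|, comp(M))$, mirroring the structure of Definition \ref{ch2:stlmrt} and reusing Lemma \ref{ch2:dcm}. First I would dispose of the base case $|\si|=0$: then $M'=M$, so $hd(M')=hd(M)$ is trivially its own residual. For the inductive step, I would split on the shape of $M$ according to Lemma \ref{ch2:repr}. If $M=\l x.M_1$, then since $M$ is a $\lm$I-term so is $M_1$, and $hd(M)=hd(M_1)$; by Lemma \ref{ch2:dcm}(1), $M'=\l x.M_1'$ with $M_1\fl^{\si}M_1'$ standard, and the induction hypothesis applied to $M_1$ gives that $hd(M_1')$ exists and is the unique residual of $hd(M_1)$, whence $hd(M')=hd(M_1')$ works. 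The cases $M=[\a]M_1$ with $M$ not a $\rho$-redex, and $M=\m\a. M_1$ with $M$ not a $\th$-redex, are handled identically using parts (2) and (4) of Lemma \ref{ch2:dcm} — here the $\lm$I-hypothesis is exactly what makes part (4) applicable, so that $M'$ really does keep the $\m\a.[-]$ prefix.

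Next come the cases where the head redex sits at the top. If $M=[\a]\m\b. M_1$ (a $\rho$-redex) or $M=\m\a.[\a]M_1$ with $\a\notin Fv(M_1)$ (a $\th$-redex), then $hd(M)=M$ itself; but $M$ is a $\lm$I-term, so in the $\th$-case $\a\in Fv(M_1)$, which contradicts $\a\notin Fv(M_1)$ — hence $M$ is not a $\th$-redex, and this subcase is vacuous. For the $\rho$-case, since $hd(M)=M$ is not involved in $\si$, point 3(b) of Definition \ref{ch2:stlmrt} forces $\si$ to reduce inside, i.e. $M\fl^{\si}[\a]\m\b. M_1'$ with $M_1\fl^{\si}M_1'$, so $M'$ is again a $\rho$-redex at the top and is the unique residual of $hd(M)=M$. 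The remaining cases are $M=(x\;M_1\ora{P})$, $M=(\l x.M_1)M_2\ora{P}$ and $M=(\m\a. M_1)M_2\ora{P}$. In the first, $hd(M)$ is the leftmost head redex occurring in some $M_i$; I would apply Lemma \ref{ch2:dcm}(3) to write $\si=\si_1\#\cdots\#\si_n$ with $M_i\fl^{\si_i}M_i'$, note each $M_i$ is a $\lm$I-term, and invoke the induction hypothesis on the appropriate component. In the last two, $hd(M)=(\l x.M_1)M_2$ (resp. $(\m\a. M_1)M_2$); since this head redex is not involved in $\si$, point 4(b) (resp. 5(b)) of Definition \ref{ch2:stlmrt} forces $\si$ to reduce only within the components $M_1,\dots,M_n$, so $M'=(\l x.M_1')M_2'\ora{P'}$ (resp. the $\m$ analogue) still has its outermost redex, which is precisely the unique residual of $hd(M)$.

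The main obstacle, and the place I would be most careful, is the $\m$-abstraction case $M=\m\a. M_1$ when $M$ is \emph{not} a $\th$-redex but could become one: as Example \ref{ch2:exlmi} illustrates, dropping the $\lm$I-hypothesis would let $\si$ turn $M$ into a $\th$-redex and then collapse it, destroying the residual of $hd(M)$. Here I would argue, exactly as in Case 4 of the proof of Lemma \ref{ch2:dcm}, that for a $\lm$I-term $\m\a. M_1$ every reduct reachable by a standard sequence not involving $hd(M)$ still has $\a$ occurring free in (the body under) any $\m\a.[\a](-)$ shape it might acquire — because $\lm$I-reduction cannot erase free occurrences of $\a$ — so $M'$ never becomes a genuine $\th$-redex, point 2(a) of Definition \ref{ch2:stlmrt} applies, and the induction hypothesis on $M_1$ finishes the argument. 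The uniqueness of the residual is then immediate in every case from the corresponding clause of the residual definition together with the fact that $hd(M)$ is not contracted along $\si$.
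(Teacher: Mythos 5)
Your overall strategy --- induction on $(|\si|,comp(M))$, case split on the shape of $M$, reuse of Lemma \ref{ch2:dcm}, and the observation that $\lm$I-reduction cannot erase free occurrences of $\a$ so that $\m\a. M_1$ never \emph{becomes} a $\th$-redex --- matches the paper's proof. However, there is one genuine error: you dismiss the case where $M$ itself is a $\th$-redex as vacuous, claiming that the $\lm$I-condition forces $\a\in Fv(M_1)$ in $M=\m\a.[\a]M_1$, contradicting $\a\notin Fv(M_1)$. This is a confusion about which subterm the $\lm$I-condition constrains: for $\m\a. P$ to be a $\lm$I-term one needs $\a\in Fv(P)$ where $P$ is the \emph{whole body}, here $P=[\a]M_1$, and $\a\in Fv([\a]M_1)$ holds automatically because of the leading $[\a]$. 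So $\m\a.[\a]M_1$ with $\a\notin Fv(M_1)$ is a perfectly legitimate $\lm$I-term and a $\th$-redex, and this case is not vacuous.

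Consequently your induction is missing a case that the paper treats explicitly: when $M=\m\a.[\a]M_2$ is a $\th$-redex, $hd(M)=M$, and since $hd(M)$ is not involved in $\si$, point 2(b)(ii) of Definition \ref{ch2:stlmrt} forces $\si$ to stay inside $M_2$, i.e. $M'=\m\a.[\a]M_2'$ with $M_2\fl^{\si}M_2'$; one then checks that $\a\notin Fv(M_2')$ still holds (reduction does not create free $\m$-variables), so $M'$ is again a $\th$-redex, $hd(M')=M'$, and it is the unique residual of $hd(M)$. This case matters downstream (e.g.\ in Lemma \ref{ch2:invhdrdx}), so it cannot simply be dropped. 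The rest of your argument is sound and follows the paper's route.
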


\begin{proof}
By induction on $(|\si|,comp(M))$, taking into account the various cases of
Definition \ref{ch2:hdrdx}. Let $\si=[R]\#\si'$. We assume $|\si'|=0$. We examine some of the cases.
\begin{enumerate}
\item $M=\m\a. M_1$.
\begin{enumerate}
\item $M=\m\a.[\a]M_2$.

Assume $M$ is a $\th$-redex. By assumption, $M_2\fl^{R}M_2'$ such that $M'=\m\a.[\a]M_2'$.
Then our assertion follows. We have also made use of point 2 (b) of Definition \ref{ch2:stlmrt}.

Assume now $M$ is not a $\th$-redex, which implies $\a\in Fv(M_2)$. Then $hd(M_1)=hd(M)$ is not $R$,
we have $M_2\fl^{R}M_2'$. Since $M$ is a $\l\m$I-term, $\a\in Fv(M_2')$ holds. Thus, if $M_1'=[\a]M_2'$, $hd(M_1')=hd(M')$,
by which, and the induction hypothesis, we have the result.

\item $M\neq \m\a.[\a]M_2$. Let $M_1\ra^{R}M_1'$, where $M'=\m\a. M_1'$.
By the induction hypothesis, $hd(M_1')$ exists and it is the unique residual of $hd(M_1)$, which is $hd(M)$.
We prove that $hd(M')=hd(M_1')$, by which our assertion follows. By Definition \ref{ch2:hdrdx},
it is enough to verify that $M'$ is not a $\th$-redex. Lemma \ref{ch2:dcm} shows that the only possibility is
$M=\m\a.[\a]M_2$ for some $M_2$ provided $M'$ is a $\th$-redex, but this was excluded by the assumption.
\end{enumerate}
\item $M=[\a]M_1$.
\begin{enumerate}
\item $M=[\a]\m\b. M_2$. Then our assumption and point 3 of Definition \ref{ch2:stlmrt} yields the statement.
\item $M$ is not a $\rho$-redex. Then Lemma \ref{ch2:dcm} ensures that $M'$ is not a $\rho$-redex either.
If $M_1\fl^{R}M_1'$, where $M'=[\a]M_1'$, then $hd(M)=hd(M_1)$ and $hd(M')=hd(M_1')$, by which,
together with the induction hypothesis,  our claim follows.
\end{enumerate}
\item $M=(\m\a. M_1)M_2\ldots M_n$. Since $hd(M)$ is not $R$, the only possibility is $M_i\ra^RM_i'$ for some $1\leq i\leq n$.
Hence our assertion follows.
\end{enumerate}
The case $|\si'|>0$ follows by the induction hypothesis.
\end{proof}

The assumption that $M$ is a $\lm$I-term is necessary in the above lemma, too.

\begin{exa}\label{ch2:exlmi1}
Let $M=\m\a.[\a](\l x.x)((\l y.x)[\a]x)$, as in Example \ref{ch2:exlmi}, then $hd(M)=(\l x.x)((\l y.x)[\a]x)$.
Consider the reduction sequence
$M \ra_\b \m\a.[\a](\l x.x)x=M'$.
In this case $hd(M')=M'$ and it is not a residual of $hd(M)$.
\end{exa}

\begin{lem} \label{ch2:invhdrdx}
Let $M$ be a $\l\m$I-term. If $M \fl^{\si} M'$ is standard and the head-redex $hd(M)$ of $M$ is involved in $\si$,
then $\si=[hd(M)]\# \si'$ for some $\si'$.
\end{lem}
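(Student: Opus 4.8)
The plan is to prove Lemma \ref{ch2:invhdrdx} by induction on the lexicographically ordered pair $(|\si|, comp(M))$, following the same case analysis on the shape of $M$ used in Definition \ref{ch2:stlmrt} and Lemma \ref{ch2:dcm}. Write $\si = [R]\#\si'$. The claim to establish is that $R = hd(M)$, for then $\si = [hd(M)]\#\si'$ as required. The key observation driving the whole argument is the contrapositive of Lemma \ref{ch2:hdrdxlem} combined with Lemma \ref{ch2:sthdrdx}: if the first step $R$ of a standard sequence is \emph{not} the head redex, then (by inspecting the clauses of Definition \ref{ch2:stlmrt}) $R$ must lie strictly inside one of the ``passive'' parts of $M$, the head redex survives with a unique residual which is again the head redex of the resulting term, and the induction hypothesis applied to the shorter remaining sequence forces the head redex never to be contracted — contradicting the hypothesis that $hd(M)$ is involved in $\si$.

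Concretely, I would organize the cases as follows. If $M = \l x.M_1$, then by clause 1 of Definition \ref{ch2:stlmrt} the whole sequence takes place inside $M_1$, and since $M$ is a $\lm$I-term so is $M_1$; the head redex of $M$ is the head redex of $M_1$, so the induction hypothesis on $comp$ applies directly. If $M = [\a]M_1$: when $M$ is a $\rho$-redex, clause 3(b) applies and the definition explicitly allows only a reduction sequence that either starts by contracting $M$ itself (giving $R = hd(M)$) or reduces strictly inside $M_1$ forever — but in the latter case $hd(M) = M$ has a residual that is never contracted, contradiction; when $M$ is not a $\rho$-redex, clause 3(a) puts the sequence inside $M_1$ and we recurse, using that $hd(M) = hd(M_1)$ and that by Lemma \ref{ch2:dcm} no reduct becomes a $\rho$-redex. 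The case $M = \m\a.M_1$ is handled symmetrically via clause 2, using crucially that $M$ is a $\lm$I-term so that the $\th$-redex status is governed exactly as in Lemma \ref{ch2:dcm}(4) and Lemma \ref{ch2:hdrdxlem}. If $M = (\l x.M_1)M_2\ldots M_n$ or $M = (\m\a.M_1)M_2\ldots M_n$, then $hd(M)$ is the outermost $\b$- resp. $\m$-redex $(\l x.M_1)M_2$ resp. $(\m\a.M_1)M_2$; clause 4 resp. 5 gives two alternatives, and in alternative (b) the head redex merely accumulates unique residuals through each $\si_i$ (by repeated use of Lemma \ref{ch2:hdrdxlem}) and is never contracted, contradicting involvement, so alternative (a) must hold and $R = hd(M)$. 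Finally, if $M = (x\,M_1\ldots M_n)$, then $M$ has no head redex at all, so the hypothesis is vacuous.

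The main obstacle I anticipate is the $\m$-abstraction case and, more generally, keeping the bookkeeping of ``unique residual of $hd(M)$'' tight across the subsidiary standard sequences $\si_1,\ldots,\si_n$ in clauses 4(b) and 5(b): one has to argue that contracting redexes strictly to the left of (or disjoint from) the head redex cannot turn $hd(M)$ into something reducible at an earlier point, nor erase it, and this is precisely where the $\lm$I-hypothesis is indispensable — it prevents the degenerate behaviour exhibited in Example \ref{ch2:exlmi1}, where a $\b$-step makes $\m\a.[\a]M_2$ collapse to a fresh $\th$-redex that is not a residual of the old head redex. So the proof will lean heavily on Lemmas \ref{ch2:dcm} and \ref{ch2:hdrdxlem}, invoking them to certify that the head redex both persists and stays the head redex until it is (necessarily) contracted, which by the involvement hypothesis must happen at step one.
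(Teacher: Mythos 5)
Your proposal is correct and follows essentially the same route as the paper: induction on the length of the sequence (the paper uses $|\si|$ alone, you add $comp(M)$ for the descent into subterms), Lemma \ref{ch2:hdrdxlem} to track the unique residual of $hd(M)$ when the first step is not the head redex, and the clauses of Definition \ref{ch2:stlmrt} to show that the ``passive'' alternatives never contract the head redex, contradicting involvement. The paper packages the final contradiction as ``a standard sequence cannot reduce a non-head redex and then immediately the head redex of the result,'' whereas you argue it case by case, but this is the same observation.
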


\begin{proof}
The proof goes by induction on $|\si|$, considering the cases of Definition \ref{ch2:stlmrt}.
If $|\si|=1$, then the statement is trivial.
Assume $\si=[R]\# \si'$, where $|\si'|>0$, and let $hd(M)$, the head redex of $M$, be different from $R$.
Let $M\ra^{R}M''\fl^{\si'}M'$. By Lemma \ref{ch2:hdrdxlem}, the head redex $hd(M'')$ of $M''$ exists and it is the unique residual
of $hd(M)$ with respect to $R$. Then $hd(M'')$ is involved in $\si'$, thus, by the induction hypothesis,
we have $\si'=[hd(M'')]\# \si''$.
Now, by examining the various forms of $M$ according to Definition \ref{ch2:hdrdx},
 we can easily check that the above situation is impossible.
\end{proof}

Again, $M\in \lm$I is necessary for the statement of the previous lemma.

\begin{exa}\label{ch2:exlmi2}
Let $M=\m\a.[\a](\l x.x)((\l y.x)[\a]x)$, as in Example \ref{ch2:exlmi}, then $hd(M)=(\l x.x)((\l y.x)[\a]x)$.
Consider the standard reduction sequence $\si$,\\
$M \ra_\b \m\a.[\a](\l x.x)x \ra_\theta (\l x.x)x \ra_\b x=M'$.
Then $hd(M)$ is involved in $\si$ and, on the other hand, $\si$ is not of the form $[hd(M)]\# \si'$ for some $\si'$.
\end{exa}

\subsection{Calculating the bounds for substitutions}

In the following lemmas we examine how standardization is related to substitutions in relation to  $\l$- and $\mu$-variables.
In addition, we give estimations for the lengths of standard reduction sequences starting from terms given in the form of substitutions.
The lemmas in this subsection are indispensable for proving Lemma \ref{ch2:sta}, which is the standardization lemma.

The next lemma shows that the length of a standard reduction sequence is not modified by a $\l$-substitution, i.e.
we can find a standard reduction sequence of the same length for the substitutions.

\begin{lem} \label{ch2:sbtrx}
Let $M\fl^{\si}M'$ be standard, then there exists a $\nu\in St$ such that\\
$M[x:=N]\fl^{\nu}M'[x:=N]$ and $|\nu |=|\si |$.
\end{lem}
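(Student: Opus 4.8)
The plan is to prove Lemma \ref{ch2:sbtrx} by induction on the lexicographically ordered pair $(|\si|, comp(M))$, mirroring the case analysis in Definition \ref{ch2:stlmrt}. The base case $|\si| = 0$ is trivial: take $\nu$ empty. For the inductive step, I would examine each clause of the definition of a standard reduction sequence according to the shape of $M$. In the "congruence" cases (clauses 1--3, and clause 6, where $M = \l y.M_1$, $M = [\a]M_1$, $M = (x\;M_1\ldots M_n)$, and the non-reducing subcases of clauses 2,3) the substitution commutes with the term constructor, so the induction hypothesis applied to the shorter reduction sequence on the subterm(s) yields the required standard sequence of equal length; one only has to note that $\l$-substitution $[x:=N]$ neither creates nor destroys $\th$-redexes or $\rho$-redexes, so the side conditions "none of $P_j$ is a $\th$-redex / $\rho$-redex" are preserved, and a $\b$-, $\m$-, $\rho$- or $\th$-redex in the subterm stays a redex of the same kind after substitution.

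The interesting cases are the ones where the first step fires a head $\b$- or $\m$-redex, i.e. clause 4(a) with $M = (\l y.M_1)M_2\ldots M_n$ and $M \ra_\b (M_1[y:=M_2]\ldots M_n)\fl^{\si_1}N$, and clause 5(a) with $M = (\m\a. M_1)M_2\ldots M_n$ and $M \ra_\m (\m\a. M_1[\a:=_rM_2]\ldots M_n)\fl^{\si_1}N$. Here $M[x:=N] = (\l y.M_1[x:=N])M_2[x:=N]\ldots M_n[x:=N]$ is still a $\b$-redex at the head, and reducing it gives $(M_1[x:=N][y:=M_2[x:=N]]\ldots M_n[x:=N])$, which by the substitution lemma equals $(M_1[y:=M_2]\ldots M_n)[x:=N]$ (renaming $y$ so $y\notin Fv(N)$). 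Applying the induction hypothesis to $\si_1$ (shorter than $\si$) gives a standard $\nu_1$ of the same length with $(M_1[y:=M_2]\ldots M_n)[x:=N]\fl^{\nu_1}N[x:=N]$; then $\nu = [\,\text{head }\b\text{-redex}\,]\#\nu_1$ is standard by clause 4(a) again, and $|\nu| = 1 + |\nu_1| = 1 + |\si_1| = |\si|$. The $\m$-case is identical, using that $[\a:=_rM_2]$ commutes with $[x:=N]$ since $x$ and $\a$ are variables of different sorts, and that a head $\m$-redex is preserved by $\l$-substitution. For the non-head subcases 4(b) and 5(b), where $\si$ is split as $\si_1\#\ldots\#\si_n$ reducing the argument-by-argument along the spine, one applies the induction hypothesis to each $\si_i$ and reassembles, again using that the head redex $(\l y.M_1)M_2$ (resp. $(\m\a. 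M_1)M_2$) is still present after substitution.

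The one point needing genuine care is the subcase 2(b)/3(b) where the reduction first travels to the first $\th$-redex $\m\a.[\a]M'$ (resp. first $\rho$-redex $[\a]\m\b.M'$) and then either contracts it or reduces under it. Since $\l$-substitution cannot turn a non-$\th$-redex into a $\th$-redex (it does not affect which $\m$-variables are free, nor the head shape $\m\a.[\a](\cdot)$) and symmetrically for $\rho$-redexes, the term that was "the first $\th$-redex / $\rho$-redex in the sequence" remains so after substitution, so the structural decomposition of $\si$ is faithfully transported. Applying the induction hypothesis to $\si'$ and to $\si''$ or $\si'''$ separately and concatenating produces a standard $\nu$ of the same length. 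I expect this preservation-of-redex-status bookkeeping under $\l$-substitution — rather than any real combinatorial difficulty — to be the main thing to get right; everything else is a routine unfolding of Definition \ref{ch2:stlmrt} together with the standard substitution lemma $M[x:=N][y:=P] = M[y:=P][x:=N[y:=P]]$ for $y\notin Fv(N)$.
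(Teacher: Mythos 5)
Your proposal is correct and follows essentially the same route as the paper: induction on the lexicographic pair $(|\si|,comp(M))$ over the clauses of Definition \ref{ch2:stlmrt}, with the key observation that the choice $\nu=\si[x:=N]$ works because $\l$-substitution (under the usual variable convention) preserves the kind and position of every redex, in particular the status of ``first $\rho$-redex / first $\th$-redex'' in the sequence. The paper's proof is merely a sketch of one case ($M=[\a]M_1$), so your more explicit treatment of the head-redex cases via the substitution lemma is a faithful filling-in rather than a deviation.
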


\begin{proof}
The proof goes by a straightforward induction on $(|\si |, comp(M))$ distinguishing the cases of Definition \ref{ch2:stlmrt}.
We deal only with the case $M=[\a]M_1$. We prove that the choice $\nu=\si[x:=N]$ is appropriate.
\begin{enumerate}
\item If $M\fl^{\si}M'$ with $M'=[\a]M_1'$ and $M_1\fl^{\si}M_1'$, then the induction hypothesis applies.
\item
\begin{enumerate}
\item If $M\fl^{\si_1}[\a]\m\b. P \ra_\rho P[\b:=\a]\fl^{\si_2}M'$
such that $[\a]\m\b. P$ is the first $\rho$-redex in the sequence,
then the induction hypothesis implies that
$M[x:=N]\fl^{\si_1[x:=N]}[\a]\m\b. P[x:=N]$ and $P[\b:=\a][x:=N]\fl^{\si_2[x:=N]}M'[x:=N]$ are standard,
moreover, $[\a]\m\b. P[x:=N]$ is the first $\rho$-redex in the sequence.
 We obtain the result immediately from Definition \ref{ch2:stlmrt}.
\item If $M\fl^{\si_1}[\a]\m\b. P \fl^{\si_2} [\a]\m\b. Q=M'$,
where $[\a]\m\b. P$ is the first $\rho$-redex in the sequence
then, by the induction hypothesis, $M[x:=N]\fl^{\si_1[x:=N]}[\a]\m\b. P[x:=N]$ and
$[\a]\m\b. P[x:=N]\fl^{\si_2}[\a]\m\b. Q[x:=N]=M'[x:=N]$ are standard, which,
considering Definition \ref{ch2:stlmrt}, yields the result.\qedhere
\end{enumerate}
\end{enumerate}
\end{proof}

In the sequel, we make preparations for the estimation of the upper bound of the length of a standard reduction sequence.
To this aim, we introduce quantitative notions in relation to reduction sequences.

\begin{defi} \label{ch2:occurnm} \label{ch2:sigmalph}
\hfill
\begin{enumerate}
\item Let $M$ be a term and $x$ (resp. $\a$) be a $\l$-variable (resp. $\mu$-variable).
Denote by $|M|_x$ (resp. $|M|_{\a}$) the number of occurrences of $x$ (resp. $\a$) in $M$.
\item Let $\si$ be the reduction sequence $M \ra^{R_1}M_1\ra^{R_2}\dots
\ra^{R_n}M_{n}$ and $\a \in Fv(M)$. Let $\lan \si
\ran _{(\rho , \a )}$ denote the number of $\rho$-reductions of the form $(\a\;\m \b.
P)$ in $\si$. Furthermore, let $\lan\si\ran_{\rho}$ be the number of $\rho$-redexes in $\si$.

\item If $M\fl^{\si}M'$, let us denote by
$\lan \si \ran_{\th}$ the number of $\th$-redexes in $\si$.
\item If $x \in Fv(M)$, let us denote by $sumarg(M,x)$ the sum of the number of arguments of each occurrence of $x$ in $M$.
It is easy to see that $sumarg(M,x)\leq comp(M)-1$.
\end{enumerate}
\end{defi}

Regarding the $\mu$-substitutions, the length of a standard reduction sequence can increase.
This is in connection with the standardization of reduction sequences initially containing $\rho$-redexes.

\begin{lem}  \label{ch2:sbtrm}
If $M\fl^{\si}M'$ is standard and $N_1,\dots ,N_k$ are terms for which $\a \notin Fv(N_i)$
$(1\leq i\leq k)$, then there exists a standard reduction sequence $\nu $ such that\\
$M[\a :=_rN_1]\dots [\a :=_rN_k]\fl^{\nu}M'[\a :=_rN_1]\dots [\a :=_rN_k]$ and
$|\nu |=|\si |+k\cdot \lan \si \ran_{(\rho,\a)}$.
\end{lem}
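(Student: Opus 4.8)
The plan is to prove this by induction on the lexicographically ordered pair $(|\si|,comp(M))$, following exactly the case analysis of Definition \ref{ch2:stlmrt} that governs standardness, just as in the proofs of Lemmas \ref{ch2:sthdrdx2}, \ref{ch2:dcm} and \ref{ch2:sbtrx}. It suffices to treat a single $\m$-substitution $[\a:=_rN]$ with $\a\notin Fv(N)$ and obtain $\nu_1$ with $|\nu_1|=|\si|+\lan\si\ran_{(\rho,\a)}$; iterating this $k$ times over $N_1,\dots,N_k$ then gives the stated formula, since the substitutions are applied successively and each introduces its own batch of new $\rho$-redexes counted by $\lan\cdot\ran_{(\rho,\a)}$ (one should check that the count is stable under the earlier substitutions — the $\rho$-reductions of the form $[\a]\m\b.P$ in $\si$ correspond bijectively to such reductions in the substituted sequence). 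The natural guess for $\nu$ in most cases is $\nu=\si[\a:=_rN]$, and the work is to verify this is standard and to locate the single extra reduction forced by each $\rho$-redex on $\a$.

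The heart of the matter is the interaction described in Definition \ref{ch2:stlmrt}(3): when $\si$ passes through a term $[\a]\m\b.P$ which is the first $\rho$-redex of the relevant subsequence and then fires it, $[\a]\m\b.P\ra_\rho P[\b:=\a]$, the operation $[\a:=_rN]$ turns the head $[\a]\m\b.P$ into $[\a]((\m\b.P)N)$ — this is no longer a $\rho$-redex, but it is a $\m$-redex $(\m\b.P')N$. So in the substituted sequence we must first perform this $\m$-reduction, obtaining $\m\b.P'[\b:=_rN]$, before we can carry out (the substituted form of) the $\rho$-reduction, which now reads $[\a]\m\b.P'[\b:=_rN]\ra_\rho P'[\b:=_rN][\b:=\a]$. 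That is precisely one new reduction step per $\rho$-redex of the form $[\a]\m\b.P$ occurring in $\si$, accounting for the $+\lan\si\ran_{(\rho,\a)}$ term; and the resulting concatenation is standard by point 5 of Definition \ref{ch2:stlmrt} (a $\m$-reduction at the head followed by a standard tail), invoking Lemma \ref{ch2:sthdrdx} / \ref{ch2:sthdrdx2}-style reasoning to splice the pieces together. In the subcase \ref{ch2:stlmrt}(3)(b)(ii), where $\si$ reaches $[\a]\m\b.P$ and then reduces inside it to $[\a]\m\b.Q$, no firing of the outer $\rho$-redex happens, so no extra step is needed and the induction hypothesis applies directly to the inner sequence $P\fl Q$.

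In the remaining cases the substitution commutes cleanly and $\nu=\si[\a:=_rN]$ works with $|\nu|=|\si|$ plus the contributions collected recursively. For $M=\l x.M_1$, $M=(x\;M_1\ldots M_n)$, $M=(\l x.M_1)M_2\ldots M_n$ and $M=(\m\a'.M_1)M_2\ldots M_n$ with $\a'\neq\a$ (after renaming), and for $M=\m\a'.M_1$ likewise, the reduction structure is preserved verbatim by $[\a:=_rN]$ — one notes that $[\a:=_rN]$ commutes with $\b$-, $\m$- and $\th$-reductions, and the condition $\a\notin Fv(N)$ guarantees no unintended capture or creation of $\rho$- or $\th$-redexes involving $\a$ elsewhere. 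The subtle point requiring $\a\notin Fv(N_i)$ is exactly that, without it, the substituted $N$ could itself carry free $\a$'s, spoiling both the bookkeeping of $\lan\si\ran_{(\rho,\a)}$ and the $\th$-redex side conditions; one should flag where this hypothesis is used.

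The step I expect to be the main obstacle is the $\rho$-redex case \ref{ch2:stlmrt}(3)(b)(i): one must check carefully that after inserting the forced $\m$-reduction $[\a]((\m\b.P')N)\ra_\m [\a]\m\b.P'[\b:=_rN]$, the remaining head $[\a]\m\b.P'[\b:=_rN]$ is genuinely the \emph{first} $\rho$-redex in its subsequence (so that point 3 of Definition \ref{ch2:stlmrt} still certifies standardness), and that the continuation $P[\b:=\a][\a:=_rN]$ matches $P'[\b:=_rN][\b:=\a]$ up to the commutation of $[\b:=\a]$ with $[\a:=_rN]$ — here one needs that $\a\neq\b$ and that $[\b:=\a]$ applied to $[\a]((\m\b.P')N)$ behaves as expected. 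Pinning down this commutation of substitutions, together with the induction bookkeeping that each such redex contributes exactly one extra step (not more, via nesting of several $\rho$-redexes on $\a$), is where the care is concentrated; the rest follows the template already established in Lemma \ref{ch2:sbtrx}.
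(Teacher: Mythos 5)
Your proposal is correct and follows essentially the same route as the paper: induction on $(|\si|,comp(M))$ over the cases of Definition \ref{ch2:stlmrt}, with the key observation that each $\rho$-reduction $[\a]\m\b.P\ra_\rho P[\b:=\a]$ in $\si$ forces, after the substitution, an extra $\m$-reduction $[\a]((\m\b.P')N)\ra_\m[\a]\m\b.P'[\b:=_rN]$ before the $\rho$-step can fire, with standardness recovered via Lemmas \ref{ch2:sthdrdx}/\ref{ch2:sthdrdx2}. The only cosmetic difference is that the paper performs all $k$ substitutions simultaneously (yielding $k$ consecutive $\m$-steps per such $\rho$-reduction) rather than iterating the $k=1$ case, and your iteration is legitimate precisely because of the stability of $\lan\cdot\ran_{(\rho,\a)}$ that you correctly flag.
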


\begin{proof}
By induction on $( |\si| , comp(M))$.
The case $|\si|=0$ is trivial.
If $\si =  [R] \# \si'$, where $M\ra^{R}M''\fl^{\si'}M'$,
the only interesting case is $M=[\g]\mu \b. M_1\ra_{\rho}M_1[\b
:=\g ]=M''\fl^{\si'}M'$. If $\a \neq \g$, then the result follows from the induction hypothesis.
Otherwise, we have the following reduction sequence denoted by (1) :
$M[\a :=_rN_1]\dots [\a :=_rN_k] =
[\a](\mu \b. M_1[\a :=_rN_1]\dots [\a :=_rN_k])N_1\dots
N_k {} \fl_{\m}^k [\a]\m \b. M_1[\a:=_rN_1]\dots [\a :=_rN_k][\b
:=_rN_1]\dots [\b :=_rN_k]  \ra_{\rho} M_1[\b :=\a ][\a :=_rN_1]\dots [\a :=_rN_k]$,
from which the estimation for the length of $\nu$ follows. Assume $\si$ is standard, we prove by induction
on $( |\si|,comp(M))$ that $\nu$ is standard. We examine the cases of Definition \ref{ch2:stlmrt}.
We consider only the case when $\si$ is standard by reason of point 3 of Definition \ref{ch2:stlmrt}.
Let $M=[\g]M_1$. If $M_1\fl^{\si}M_1'$, the induction hypothesis applies. Otherwise,
there are standard $\si'$, $\si''$ such that $M\fl^{\si'}[\g]\m\b. M'' \fl^{\si''}M'$
and $[\g]\m\b. M''$ is the first term in the sequence which is a $\rho$-redex and either
$\si''=[[\g]\m\b. M'']\#\si'''$ for some $\si'''\in St$ or $M'=[\g]\m\b. M'''$ and $M''\fl^{\si''}M'''$.
Assume $\g=\a$. Then, by the induction hypothesis,
$M[\a :=_rN_1]\dots [\a :=_rN_k]\fl^{\nu'}[\a](\m\b. M''[\a :=_rN_1]\dots [\a :=_rN_k])N_1\ldots N_k$
is standard and $(\m\b. M''[\a :=_rN_1]\dots [\a :=_rN_k])N_1\ldots N_k$ is the first $\m$-redex in the sequence.
Then we apply the head reduction sequence of (1),
hence Lemma \ref{ch2:sthdrdx} involves that $\nu$ is standard. If $\g\neq \a$,
then $M[\a :=_rN_1]\dots [\a :=_rN_k]=[\g]M_1[\a :=_rN_1]\dots [\a :=_rN_k] \fl [\g]\m\b. M''[\a :=_rN_1]\dots [\a :=_rN_k]$.
If $\si''=[[\g]\m\b. M'']\#\si'''$, then Lemma \ref{ch2:sthdrdx2} applies.
Otherwise we obtain the result by the induction hypothesis.
\end{proof}

The situation in the lemma below is more complicated when we assume that we are provided a term together with a
standard reduction sequence emanating from that and we substitute the term in place of a variable of another term.
This is in relation with the possibility of creating new redexes. As we have seen earlier, sometimes we only obtain
an estimation for the lengths of the new standard reduction sequences.

\begin{lem} \label{ch2:cmp}
\hfill
\begin{enumerate}
\item If $N\fl^{\si}N'$ is standard, then there exists a standard reduction $\nu$ such that\\
$M[x:=N]\fl^{\nu}M[x:=N']$ and $|\nu |\leq |\si |\cdot |M|_x + sumarg(M,x)\cdot (\lan\si\ran_{\th}+\lan\si\ran_{\rho})$.
\item If $N\fl^{\si}N'$ is standard, then there exists a standard reduction $\nu $ such that\\
$M[\a :=_rN]\fl^{\nu}M[\a :=_rN']$ and $|\nu |=|\si |\cdot |M|_{\a}$.
\end{enumerate}
\end{lem}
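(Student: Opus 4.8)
The plan is to prove both estimates simultaneously by induction on $(|\si|, comp(M))$, distinguishing the cases of Definition \ref{ch2:stlmrt} applied to the \emph{outer} term $M$, and in each case taking as candidate for $\nu$ the reduction sequence obtained by substituting $N$ (resp. $\si$) into the redexes of the standardization of $M$, with some extra head reductions inserted when new redexes are created by the substitution. The base case $M=x$ of part (1) is trivial ($\nu=\si$, and $|M|_x=1$, $sumarg(M,x)=0$), and the case where $x$ (resp. $\a$) does not occur in $M$ is trivial as well. For the inductive step I would first dispose of the purely structural cases — $M=\l y.M_1$, $M=(x\;M_1\ldots M_n)$ with the head variable different from $x$, $M=(M_1\ldots M_n)$ in general — where the substitution distributes over the subterms, the counts $|M|_x$, $sumarg(M,x)$, $\lan\si\ran_\th$, $\lan\si\ran_\rho$ add up over the pieces, and the induction hypothesis applied to each subterm yields the bound; standardness of the assembled $\nu$ follows from Definition \ref{ch2:stlmrt} together with Lemma \ref{ch2:sthdrdx} and Lemma \ref{ch2:sthdrdx2} exactly as in the unsubstituted proofs.

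The genuinely new phenomena occur precisely when the substitution \emph{creates a redex}, and this is where the two factors $|\si|\cdot|M|_x$ and $sumarg(M,x)\cdot(\lan\si\ran_\th+\lan\si\ran_\rho)$ come from. For part (2), substituting a term for a $\m$-variable $\a$ replaces each subterm $[\a]U$ by $[\a](U\,N)$; no new $\b$-, $\m$- or $\th$-redex can be created at the point of substitution because $[\a](U\,N)$ is not itself a redex and $U\,N$ is a redex only if $U$ was already (a $\l$- or $\m$-abstraction is already a redex-head before application). Only a $\rho$-redex can be affected, but a $\rho$-redex $[\a]\m\b.P$ stays a $\rho$-redex after substitution. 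Hence there is no blow-up beyond copying $\si$ once for each of the $|M|_\a$ occurrences of $\a$, and one proves $|\nu|=|\si|\cdot|M|_\a$ by a clean induction, using Lemma \ref{ch2:sbtrm} to handle the $\rho$-case inside $\si$ (the $\a\notin Fv(N)$ side condition there is automatically met since $\a$ is the bound/substituted variable) and Lemmas \ref{ch2:sthdrdx}, \ref{ch2:sthdrdx2} for standardness. For part (1), a $\l$-substitution $M[x:=N]$ is delicate because an occurrence of $x$ may sit in an applicative position $(x\;\ora{Q})$, and a reduction inside $N$ that turns $N$ into a $\l$- or $\m$-abstraction suddenly exposes a $\b$- or $\m$-redex $(N'\;Q_1)$; each such step inside $\si$ must now be followed by a head reduction of the freshly created redex in each copy. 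The number of such insertions is controlled by $sumarg(M,x)$ (the total number of arguments attached to occurrences of $x$) times the number of times $\si$ produces such an abstraction — which is bounded by $\lan\si\ran_\th+\lan\si\ran_\rho$, since an abstraction can be produced at the head only by a $\th$-reduction $\m\a.[\a]M'\ra_\th M'$ or by a $\rho$-reduction $[\a]\m\b.P\ra_\rho P[\b:=\a]$ exposing the body — this bookkeeping, plus $|\si|$ copies for the $|M|_x$ plain occurrences, gives the stated inequality. Standardness of the resulting $\nu$ is re-established by appealing to Lemma \ref{ch2:sthdrdx2} (the case of a $\b$- or $\m$-redex whose left side is the first abstraction reached in a standard subsequence) precisely at the insertion points.

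The main obstacle, as this description suggests, is part (1): correctly accounting for the interaction between $\si$-internal $\th$- and $\rho$-steps and the applicative contexts surrounding the occurrences of $x$, and in particular verifying that inserting the head reductions of the newly created $\b$/$\m$-redexes keeps the whole sequence standard rather than merely reducing to the right target. I expect the bulk of the work to be a careful case analysis of Definition \ref{ch2:stlmrt} point by point for $M$, combined with a secondary induction over the structure of $\si$ showing that the abstraction-producing steps of $\si$ are exactly the $\th$- and $\rho$-steps and that each contributes at most $sumarg(M,x)$ extra reductions; Lemmas \ref{ch2:sthdrdx} and \ref{ch2:sthdrdx2} will do the heavy lifting for the standardness claims, and Lemma \ref{ch2:sbtrx} will be re-used for the sub-case where no redex is created. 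Part (2) I expect to be routine once part (1)'s techniques are in place.
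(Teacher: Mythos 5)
Your overall architecture (induction combining $|\si|$ and $comp(M)$, routine structural cases, part (2) exact because $\mu$-substitution creates no problematic interaction, the extra $sumarg(M,x)\cdot(\lan\si\ran_{\th}+\lan\si\ran_{\rho})$ term tied to occurrences of $x$ in applied position) matches the paper, and your treatment of part (2) is essentially correct. But the mechanism you give for the critical case of part (1) is wrong, and it is exactly the place where the paper has to work. You propose that when a step of $\si$ turns (the copy of) $N$ into an abstraction, that step is ``followed by a head reduction of the freshly created redex in each copy.'' This fails on both counts. First, it does not preserve the target: the lemma asks for a reduction ending at $M[x:=N']$, in which the created redexes $(N'\,Q_1\ldots)$ are still present unreduced; firing them overshoots. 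Second, the order is not standard: if $M$ contains $(x\,M_1\ldots M_n)$ and $\si$ performs a top-level $\th$-step $\m\a.[\a]R\ra_{\th}R$ on the head copy of $N$, then by the overlapping-redex convention of Definition \ref{ch2:hdrdx} and by point 5(b) of Definition \ref{ch2:stlmrt} (which requires the head component to remain of the form $\m\a.N_1$ throughout) a standard sequence is simply not allowed to perform that $\th$-step while the arguments are still outside. The correct move, which your plan is missing, is the opposite order: fire all $n$ $\m$-redexes \emph{first}, absorbing $M_1[x:=N],\ldots,M_n[x:=N]$ into the $[\a]$-contexts, transport the prefix of $\si$ through the substitutions $[\a:=_rM_i[x:=N]]$ via Lemma \ref{ch2:sbtrm}, and only then perform the $\th$-step, which strips $\m\a.[\a](\,\cdot\,)$ and restores the application $(R\,M_1[x:=N]\ldots M_n[x:=N])$, so the target is preserved; standardness then follows from Lemma \ref{ch2:sthdrdx}. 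This detour is what costs the extra $n$ $\m$-steps per outermost $\th$-step of $\si$ and accounts for the $\lan\si\ran_{\th}$ factor.

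Relatedly, your explanation of why $\lan\si\ran_{\rho}$ enters the bound does not hold up. A $\rho$-redex $[\a]\m\b. P$ has type $\bot$ and, as the paper notes, cannot occur in function position, so a $\rho$-step of $\si$ can never ``expose an abstraction'' that creates a $\b$- or $\m$-redex with the arguments of $x$. The $\rho$-count actually arrives through Lemma \ref{ch2:sbtrm}: when the prefix of $\si$ preceding the first top-level $\th$-redex is pushed through the $n$ substitutions $[\a:=_rM_i[x:=N]]$, each $(\rho,\a)$-reduction in that prefix inflates by $n$ extra $\m$-steps, contributing $n\cdot\lan\si_1\ran_{(\rho,\a)}\leq n\cdot\lan\si\ran_{\rho}$. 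Without these two corrections your construction neither lands on $M[x:=N']$ nor yields a sequence that is standard in the sense of Definition \ref{ch2:stlmrt}, so as written the proof of part (1) does not go through.
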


\begin{proof}
Let us only deal with case 1.
The proof goes by a straightforward induction on $comp(M)$. We lean on the points of Definition \ref{ch2:hdrdx}.
For example, let us consider two of the cases.
\begin{enumerate}
\item $M=(x\;M_1\ldots M_n)$. Let $\tau_i$ be the reduction sequences obtained for $M_i[x:=N]$
by the induction hypothesis. Let $\tau=\tau_1\#\ldots\#\tau_n$. By induction on $|\si|$,
we define the following transformation $\si^\circ$. We eliminate the outermost $\th$-redexes
from $\si$, that is, redexes $R$, where $N\fl^{\si'}R\ra_{\th}R'\fl^{\si''}N'$.
Observe that an outermost $\th$-redex appears in $\si$ iff $\si$ is standard by reason of point 2.
(a) of Definition \ref{ch2:stlmrt}. Let $\si$ be such that $N=\m\a. P \fl^{\si_1} \m\a.[\a]R \ra_{\th}R\fl^{\si_2}N'$,
 where $\si=\si_1\#\si_2$ and $R$ is the first $\th$-redex in $\si_1$.
Let $\xi$ be $(\m\a. P)M_1[x:=N]\ldots M_n[x:=N] \ra_{\mu}^n \m\a. P[\a:=_r M_1[x:=N]]\ldots [\a:=_rM_n[x:=N]]\fl^{\si_1'}
\m\a.[\a](R\;M_1[x:=N]\ldots M_n[x:=N])\ra_{\th}(R\;M_1[x:=N]\ldots M_n[x:=N])$, where $\si_1'$
is obtained from $\si_1$ by Lemma \ref{ch2:sbtrm}. Then let $\si^\circ=\xi\#(\si_2)^\circ$
where $\si=[R]\#\si'$. The reduction sequence
$M[x:=N] \fl^{\si^\circ} (N'\;M_1[x:=N]\dots M_n[x:=N]) \fl^{\tau} (N'\;M_1[x:=N']\dots M_n[x:=N'])$
is appropriate. We prove by induction on $|\si|$ that $|\si^\circ|\leq n+n\cdot(\lan\si\ran_\rho+\lan\si\ran_\theta)$:
$|\si^\circ|=|\xi|+|\si_2^\circ|=1+n+|\si_1|+n\cdot\lan \si_1\ran_{(\rho,\a)}+|\si_2^\circ|\leq 1+n+|\si_1|+n\cdot\lan
\si_1\ran_{\rho}+|\si_2^\circ|\leq 1+n+|\si_1|+n\cdot\lan\si_1\ran_{\rho}+|\si_2|+n\cdot(\lan \si_2 \ran_{\th}+\lan
\si_2 \ran_{\rho})=|\si|+n\cdot(\lan \si \ran_{\th}+\lan \si \ran_{\rho})$.
Then $|\si^\circ+\tau|=|\si^\circ|+\smash{\sum_{i=1}^{i=n}|\tau_i|}\leq |\si|+n\cdot(\lan\si\ran_{\th}+\lan \si\ran_{\rho})+|\tau_i|\cdot
\smash{\sum_{i=1}^{i=n}}|M_i|_x+\smash{\sum_{i=1}^{i=n}}sumarg(M_i,x)\cdot(\lan\si\ran_{\th}+\lan \si\ran_{\rho})=
|\si|\cdot|M|_x+sumarg(M,x)\cdot(\lan \si\ran_{\th}+\lan \si\ran_{\rho})$.
\item $M=(\l y.M_1)M_2\ora{P}$. The induction hypothesis gives $\tau_i$
such that $M_i[x:=N]\fl^{\tau_i}M_i[x:=N']$ $(1 \leq i\leq ,k)$. Then we can choose $\tau
=\tau_1\# \ldots\#\tau_k$. \qedhere
\end{enumerate}
\end{proof}

Lemmas \ref{ch2:comp} and \ref{ch2:comput} combine the results of the preceding lemmas: we substitute in place of a variable common in the members of a standard reduction sequence a new term such that we are also equipped with a standard reduction sequence starting from it. In the case of the $\l$-substitution we obtain an inequality for the length of the new standard reduction sequence, whereas in the case of the $\mu$-substitution we have an exact estimation.

\begin{lem} \label{ch2:comp}
Let $M\fl^{\si}M'$ and $N\fl^{\nu}N'$ be standard. Then there is a standard reduction $\tau$ such that
$M[x:=N]\fl^{\tau}M'[x:=N']$ and $|\tau |\leq |\si |+|M'|_x \cdot
|\nu | + sumarg(M',x)\cdot (\lan\nu\ran_{\th}+\lan\nu\ran_{\rho})$.
\end{lem}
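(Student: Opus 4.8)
The plan is to prove Lemma \ref{ch2:comp} by combining Lemmas \ref{ch2:sbtrx} and \ref{ch2:cmp} in two stages, factoring the substitution $M[x:=N]\fl M'[x:=N']$ through the intermediate term $M'[x:=N]$. First I would apply Lemma \ref{ch2:sbtrx} to the standard reduction sequence $M\fl^{\si}M'$ with the $\l$-variable $x$ and the term $N$: this yields a standard reduction sequence $\si_1$ with $M[x:=N]\fl^{\si_1}M'[x:=N]$ and $|\si_1|=|\si|$. Next I would apply case 1 of Lemma \ref{ch2:cmp} to the standard reduction $N\fl^{\nu}N'$, substituting into the term $M'$: this produces a standard reduction $\si_2$ with $M'[x:=N]\fl^{\si_2}M'[x:=N']$ and $|\si_2|\leq |\nu|\cdot|M'|_x + sumarg(M',x)\cdot(\lan\nu\ran_{\th}+\lan\nu\ran_{\rho})$.

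The natural candidate for the desired reduction is then $\tau=\si_1\#\si_2$, which clearly satisfies $M[x:=N]\fl^{\tau}M'[x:=N']$, and the length bound follows immediately by addition:
\[
|\tau| = |\si_1|+|\si_2| \leq |\si| + |M'|_x\cdot|\nu| + sumarg(M',x)\cdot(\lan\nu\ran_{\th}+\lan\nu\ran_{\rho}).
\]
So the arithmetic is routine; the real content is showing that $\tau=\si_1\#\si_2$ is itself standard. Concatenation of two standard reduction sequences is not standard in general, so this is the step that needs genuine care. The key observation is that $\si_1$ is a reduction taking place entirely inside the "skeleton" $M\fl M'$ — its redexes are residuals of redexes already present in $M$ (with $N$ merely passive, plugged into the occurrences of $x$) — whereas $\si_2$ only touches the freshly substituted copies of $N$. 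I would argue that this is exactly the situation covered by an induction on $(|\si|,comp(M))$ mirroring the one used in Lemma \ref{ch2:sbtrx}: one walks through the cases of Definition \ref{ch2:stlmrt} for $M$, and in each case the standard structure of $\si$ is preserved by the substitution (as shown in the proof of Lemma \ref{ch2:sbtrx}), while the tail $\si_2$ can be absorbed into the final "components" part of the standard decomposition of $M'[x:=N]$.

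The main obstacle, and the one I would spend the most effort on, is the interaction of $\si_2$ with the head part of the standard sequence when $M$ has the form $(\l x.M_1)M_2\ldots M_n$ or $(\m\a.M_1)M_2\ldots M_n$, or when the substitution $[x:=N]$ creates a new redex at the root of some occurrence of $x$ — precisely the phenomenon Lemma \ref{ch2:cmp} was built to handle via the $\si^\circ$ transformation and Lemma \ref{ch2:sthdrdx}. Here one cannot naively concatenate: one must interleave, using Lemma \ref{ch2:sthdrdx} (a head sequence followed by a standard one is standard) and Lemma \ref{ch2:sthdrdx2} to reorganize the combined sequence so that the newly created head redexes are performed in the right order. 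I expect the cleanest route is to note that $\si_2$ produced by Lemma \ref{ch2:cmp} is already built as a concatenation of (essentially head) reduction sequences acting on the components, so that appending it after $\si_1$ falls under the component clauses (points 4(b), 5(b), 6) of Definition \ref{ch2:stlmrt}, with Lemma \ref{ch2:sthdrdx} invoked whenever a head reduction must precede a standard tail. Apart from this bookkeeping, the proof is a direct assembly of the two earlier lemmas.
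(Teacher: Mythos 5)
Your arithmetic is fine and you have correctly located the crux (standardness of the assembled sequence), but the assembly itself is in the wrong order, and the proposed repair does not suffice. Take $M=(x\;M_2)$ with a redex in $M_2$, so $\si$ reduces $M_2$ to $M_2'$, and $N=(\l y.y)z\ra_\b z=N'$. Your $\tau=\si_1\#\si_2$ is $((\l y.y)z\;M_2)\ra((\l y.y)z\;M_2')\ra(z\;M_2')$, which contracts the argument before the head $\b$-redex; by point 4 of Definition \ref{ch2:stlmrt} this is not standard, and the standard sequence is the reverse order. The same failure occurs away from the head: for $M=(y\;M_2\;M_3)$ with $x\in Fv(M_2)$ and $\si$ reducing $M_3$, point 6 forces the copy of $N$ sitting inside $M_2$ to be fully reduced before anything in $M_3$ is touched, so the $N$-reductions cannot be ``absorbed into the final components part'' --- they must be pushed \emph{in front of} the $\si$-reductions lying to their right, recursively at every level of the term. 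Your suggestion that $\si_2$ falls under the component clauses after $\si_1$ is therefore false precisely in the cases that matter, and the local fixes via Lemmas \ref{ch2:sthdrdx} and \ref{ch2:sthdrdx2} only handle the creation of new head redexes, not this global reordering. Note also that you cannot fall back on Theorem \ref{ch2:stlm} to standardize $\si_1\#\si_2$ afterwards, since standardization is only guaranteed to respect the measure $\ma{M}$, not the additive bound claimed in the lemma.

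This is why the paper does not compose Lemmas \ref{ch2:sbtrx} and \ref{ch2:cmp} through the intermediate term $M'[x:=N]$: it proves the statement directly by induction on $(|\si|,comp(M))$ over the cases of Definition \ref{ch2:stlmrt}, using Lemma \ref{ch2:cmp} only as the base case $|\si|=0$. In the case $M=(x\;M_2\ldots M_n)$ the constructed $\tau$ is $\nu^\circ\#\tau_2\#\ldots\#\tau_n$, where the transformed head reduction $\nu^\circ$ of the head copy of $N$ comes \emph{first} and each $\tau_i:M_i[x:=N]\fl M_i'[x:=N']$ is obtained from the induction hypothesis of the lemma itself, i.e.\ is already a correctly interleaved mixture of the skeleton reductions $\si_i$ and the reductions of $\nu$ inside the copies of $N$ occurring in $M_i$. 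Once you carry out the reorderings your plan requires, you are forced into exactly this recursive interleaving, so the two-stage factorization does no work; you should set it aside and run the induction directly.
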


\begin{proof}
The proof goes by induction on $( |\si |,comp(M))$,
taking into account the various points of Definition
\ref{ch2:stlmrt}.
The case $|\si |=0$ is treated by Lemma \ref{ch2:cmp}.
Let $\si=[R]\# \si'$. We treat some of the typical cases.
\begin{enumerate}
\item $M=\m\a. M_1$.
 If $M_1\fl^{\si}M_1'$, then the induction hypothesis applies.
Let $\si$ be standard by reason of point 2 (a) of Definition \ref{ch2:stlmrt}.
Let $M\fl^{\si_1}\m\a.[\a]M_2 \ra_{\th}M_2\fl^{\si_2}M'$.
Lemma \ref{ch2:sbtrx} and the induction hypothesis give standard $\nu_1$ and $\nu_2$ such that
$M[x:=N]\fl^{\nu_1}\m\a.[\a]M_2[x:=N])\ra_{\th}M_2[x:=N]\fl^{\nu_2}M'[x:=N']$.
Moreover, since $\m\a.[\a]M_2[x:=N]$ is the first $\th$-redex in the sequence,
$\nu=\nu_1\#[\m\a.[\a]M_2[x:=N]]\#\nu_2$ is standard by virtue of Definition \ref{ch2:stlmrt}.
The case of point 2. (b) of Definition \ref{ch2:stlmrt} follows from the induction hypothesis.
\item $M=(\m\a. M_1)M_2\ldots M_n$. Assume $M\ra_{\m}(\m\a. M_1[\a:=_rM_2]\ldots M_n)\fl^{\si'}M'$.
Then $((\m\a. M_1)M_2)[x:=N]$ is the head redex of $M[x:=N]$ and Lemma \ref{ch2:sthdrdx}
together with the induction hypothesis yield the result. If $(\m \a. M_1)M_2$ is not involved in $\si$,
then the induction hypothesis applies.
\item $M=(x\;M_2\ldots M_n)$. The proof is analogous to that of Lemma \ref{ch2:cmp}.
We define, by induction on $\nu$, a standard reduction sequence $\nu^\circ$ in the same way
as in Lemma \ref{ch2:cmp}. We let $\tau=\nu^\circ\#\tau_2\#\ldots\#\tau_n$,
where $\tau_i$ is obtained from $M_i[x:=N]$ $(2\leq i\leq n)$ by the induction hypothesis.
By examining the various cases of Definition \ref{ch2:stlmrt}, we prove by induction on $|\nu|$ that
$\tau\in St$. As to the length of $\tau$, we have
$|\tau| = |\nu^\circ|+|\tau_2|+\ldots+|\tau_n| \leq |\nu|+(n-1)\cdot(\lan\nu\ran_{\th}+\lan \nu \ran_{\rho})+
\smash{{\sum_{i=2}^{n}}}(|\si_i|+|M_i'|_x\cdot|\nu|+sumarg(M_i',x)\cdot(\lan\nu\ran_{\th}+\lan \nu \ran_{\rho}))=
|\si|+|M'|_x\cdot|\nu|+sumarg(M',x)\cdot(\lan \nu\ran_{\th}+\lan\nu\ran_{\rho})$.
\end{enumerate}
The remaining cases are proved analogously.
\end{proof}

\begin{lem} \label{ch2:comput}
Let $M\fl^{\si}M'$ and $N\fl^{\nu}N'$ be standard.
Then there is standard
sequence $\tau$ such that
$M[\a :=_rN]\fl^{\tau}M'[\a :=_rN']$ and $|\tau|= |\si |+\lan \si
\ran_{(\rho , \a)}+|M'|_{\a }\cdot |\nu |$.
\end{lem}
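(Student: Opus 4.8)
The plan is to follow the pattern of the proofs of Lemmas~\ref{ch2:sbtrm},~\ref{ch2:cmp} and~\ref{ch2:comp}: induction on the lexicographically ordered pair $(|\si|,comp(M))$, splitting according to the cases of Definition~\ref{ch2:stlmrt}. Throughout we keep the standing hygiene assumption that $\a$ and all bound $\m$-variables are chosen not to occur free in $N$, so that the $\m$-substitution $[\a:=_r N]$ commutes with all the reductions below; in particular, for a $\rho$-step one has $M_1[\b:=\a][\a:=_rN]=M_1[\a:=_rN][\b:=_rN][\b:=\a]$, exactly as in the proof of Lemma~\ref{ch2:sbtrm}. When $|\si|=0$ we have $M=M'$, so $\lan\si\ran_{(\rho,\a)}=0$, and the statement is precisely case~2 of Lemma~\ref{ch2:cmp}, which supplies a standard $\tau$ with $M[\a:=_rN]\fl^{\tau}M[\a:=_rN']$ and $|\tau|=|M|_{\a}\cdot|\nu|=|M'|_{\a}\cdot|\nu|$.

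For the inductive step, write $\si=[R]\#\si'$ with $M\ra^{R}M''\fl^{\si'}M'$. In every case except when $R$ is a $\rho$-redex of shape $[\a]\m\b.P$, the term $R[\a:=_rN]$ is a single redex of the same kind whose contraction produces $M''[\a:=_rN]$; applying the induction hypothesis to $M''\fl^{\si'}M'$ (the endpoint $M'$, hence $|M'|_{\a}$, is unchanged, and $\lan\si\ran_{(\rho,\a)}=\lan\si'\ran_{(\rho,\a)}$) and prepending this one step yields a $\tau$ with $|\tau|=1+|\si'|+\lan\si'\ran_{(\rho,\a)}+|M'|_{\a}\cdot|\nu|=|\si|+\lan\si\ran_{(\rho,\a)}+|M'|_{\a}\cdot|\nu|$. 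When instead $R=[\a]\m\b.M_1\ra_{\rho}M_1[\b:=\a]=M''$, the $\m$-substitution forces a $\m$-step before the $\rho$-step can fire:
\[
M[\a:=_rN]=[\a]\bigl((\m\b.M_1[\a:=_rN])\,N\bigr)\ra_{\m}[\a]\m\b.\,M_1[\a:=_rN][\b:=_rN]\ra_{\rho}M_1[\a:=_rN][\b:=_rN][\b:=\a]=M''[\a:=_rN],
\]
contributing two steps instead of one, which is matched by $|\si|=|\si'|+1$ and $\lan\si\ran_{(\rho,\a)}=\lan\si'\ran_{(\rho,\a)}+1$; again we close the case by the induction hypothesis on $M''\fl^{\si'}M'$. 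In the structural cases (points~2--6 of Definition~\ref{ch2:stlmrt}) the argument recurses into the components, or into the body of the abstraction, summing the lengths; here one uses that $|M'|_{\a}$ is the sum of the $\a$-occurrence counts of the components plus one for each subterm of $M'$ of the form $[\a]U$, and each such subterm produces, after $[\a:=_rN']$, a copy of $N'$ whose corresponding intermediate copy of $N$ is reduced to $N'$ by one more application of case~2 of Lemma~\ref{ch2:cmp}; this is exactly what accounts for the $|M'|_{\a}\cdot|\nu|$ summand. Note that, because $\a$ is a $\m$-variable, $N$ occurs in $M[\a:=_rN]$ only as an \emph{argument} and never in head position, so — in contrast with the $\l$-substitution case of Lemma~\ref{ch2:comp} — no $\nu^{\circ}$-type reshuffling of $\nu$ is needed and the count stays exact rather than becoming an inequality.

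Standardness of the assembled $\tau$ is, as usual here, the delicate point rather than the arithmetic. Each $\m$- or $\rho$-step prepended above is the head redex of the term it is applied to, so Lemma~\ref{ch2:sthdrdx} guarantees that gluing it in front of an already standard tail keeps the sequence standard; the two-phase subcases arising from points~2 and~3 of Definition~\ref{ch2:stlmrt} are handled with Lemmas~\ref{ch2:sthdrdx} and~\ref{ch2:sthdrdx2}, and in the application cases standardness is inherited directly from the relevant clause of Definition~\ref{ch2:stlmrt}. The one thing that must be checked with care is that the reductions of $\nu$ turning the $|M'|_{\a}$ copies of $N$ into $N'$ are inserted at the right places inside the inductively built sequence rather than merely appended at the end (a plain concatenation of standard sequences need not be standard); the induction driven by Definition~\ref{ch2:stlmrt}, with case~2 of Lemma~\ref{ch2:cmp} doing the work at the leaves, places them correctly. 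I expect this bookkeeping — together with checking that $\m$-substitution never produces a stray outer $\rho$- or $\th$-redex that would violate a standardness clause — to be the main obstacle.
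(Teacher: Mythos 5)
Your proposal is correct and follows essentially the same route as the paper's proof: induction on $(|\si|,comp(M))$ along the cases of Definition~\ref{ch2:stlmrt}, with the base case delegated to case~2 of Lemma~\ref{ch2:cmp}, the $\rho$-redexes on $\a$ forcing an extra $\m$-step that accounts for the $\lan\si\ran_{(\rho,\a)}$ summand (exactly as in Lemma~\ref{ch2:sbtrm}), and standardness secured via Lemmas~\ref{ch2:sthdrdx} and~\ref{ch2:sthdrdx2}. Your observation that the count stays an exact equality because $N$ only ever lands in argument position is a correct reading of why no $\nu^{\circ}$-reshuffling is needed here, in contrast with Lemma~\ref{ch2:comp}.
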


\begin{proof}
The proof goes by induction on $( |\si | , comp(M))$,
similarly to that of the previous lemma. We consider some of the
cases according to Definition \ref{ch2:stlmrt}.
The case $\si$=0 is treated in Lemma \ref{ch2:cmp}.
Let $\si=[R]\# \si'$ for some $\si'$.
\begin{enumerate}
\item $M=[\b]M_1$.  Assume $\b=\a$. If $M_1\fl^{\si}M_1'$ with $M'=[\a]M_1'$,
 then the induction hypothesis applies. Otherwise, let $M\fl^{\si_1}[\a]\m\g. M_2 \ra_{\rho}M_2[\g:=\a]\fl^{\si_2}M'$.
Similarly to the proof of Lemma \ref{ch2:sbtrm}, we have the standard reduction sequence $\tau$:
$M[\a :=_rN] \fl^{\si_1[\a:=_rN]} [\a](\m\g. M_2[\a :=_rN])\;N \ra_{\m} [\a]\m\g. M_2[\a :=_rN][\g:=_rN]) \ra_{\rho} M_2[\g:=\a][\a :=_rN]
\fl^{\tau_2}M'[\a:=_rN']$, where $\tau_2$ is obtained from $\si_2$ by the induction hypothesis and the standardness follows
from Lemma \ref{ch2:sthdrdx2} and the induction hypothesis, where we have made use of the fact that $[\a]\m\g. M_2$ is the first
$\rho$-redex in $\si_1$.  For the length of $\tau$ we have $|\tau|=2+|\si_1|+|\tau_2|=2+|\si_1|+|\si_2|+\lan\si_2\ran_{(\rho,\a)}+
|M'|_{\a}\cdot|\nu|=|\si|+\lan\si\ran_{(\rho,\a)}+|M'|_{\a}\cdot|\nu|$. Assume now $\b\neq \a$. The case when $M$ does not reduce to a
$\rho$-redex or it reduces to a $\rho$-redex but this is not involved in $\si$ is again
obvious. Let $M\fl^{\si_1}[\b]\m\g. M_2$.
Then $M[\a :=_rN]\fl^{\si_1[\a:=_rN]}[\b]\m\g. M_2[\a :=_rN] \ra_{\rho} M_2[\g :=\b][\a :=_rN]\fl^{\tau'}M'[\a :=_rN']$ is standard,
and $\tau'$ is obtained by the induction hypothesis. The equation for the length of $\tau$ is obviously valid in this case, too.
\item $M=(\m\b. M_1)M_2\ldots M_n$. If $(\m\b. M_1)M_2$ is not involved in $\si$, then the induction hypothesis applies.
Otherwise, since $hd(M)=hd(M[\a:=_rN])$, we have the result by Lemma \ref{ch2:sthdrdx}, and again by the induction hypothesis.
\end{enumerate}
All the remaining cases are proved in a similar way.
\end{proof}

\subsection{The standardization theorem for the \texorpdfstring{$\lmrt$}{lambda-mu-rho-theta}-calculus}

We are in a position now to state and prove the standardization theorem for the $\lmrt$-calculus.
As an additional result, we obtain an upper bound for the lengths of the standard $\l\m$I-reduction sequences.
First of all, we harvest the results of the previous subsection in a definition: the definition below assigns
values to pairs formed by redexes and their containing terms. The definition is of technical interest: it makes us possible
to find an upper bound for the standardization of a reduction sequence.

\begin{defi}\label{ch2:mr}
Let $R$ be a redex in a term $M$, the number $m(R,M)$ is defined as follows.
\begin{enumerate}
\item If $R=(\m \a. P) Q$, then $m(R,M)=|P|_{\a}$.
\item If $R=(\l x.P) Q $, then $m(R,M)=2\cdot comp(M)-2$.
\item If $R=[\a]\m \b. P$, then $m(R,M)=1$.
\item If $R=\mu \a. [\a] P$ and $R$ has $n$ arguments in $M$, then $m(R,M)=2n-1$.
\end{enumerate}

\end{defi}

The definition of $m(R,M)$ resembles the corresponding definition applied by Xi \cite{Xi},
where $m(R)$ is the number of the occurrences of $x$ in $P$ provided $R=(\l x.P)Q$.
The additional redexes, however, compel us to change the value of $m(R,M)$ even for the case of the $\b$-redex.
The lemma below will be used in the next subsection.

\begin{lem} \label{ch2:sta0}
If $R$ is a redex in $M$, then $m(R,M)\leq 2\cdot (comp(M)-1)$.
\end{lem}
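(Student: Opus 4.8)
The plan is to do a straightforward case analysis on the shape of the redex $R$, exactly matching the four cases of Definition~\ref{ch2:mr}, and in each case bound $m(R,M)$ by $2\cdot(comp(M)-1)$ using the elementary size inequalities that follow from the definition of $comp$.

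First I would recall the easy facts: for any redex $R$ occurring in $M$ we have $comp(M)\geq 1$, and whenever $R$ has $n$ arguments in $M$ there are $n$ distinct proper subterms of $M$ each contributing at least one symbol, so $comp(M)\geq comp(R_0)+n$ where $R_0$ is the head of the redex; in particular $n\leq comp(M)-1$. Also, if $R=(\m\a.P)Q$ then $|P|_\a$ counts occurrences of the $\m$-variable $\a$ among the symbols of $P$, and $P$ is a proper subterm of $R\leq M$, so $|P|_\a\leq comp(P)\leq comp(M)-2\leq 2\cdot(comp(M)-1)$. (Here $comp(M)\geq 2$ since $M$ contains a redex, hence an application or an abstraction.)

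Then I would dispatch the four cases. Case~1: $R=(\m\a.P)Q$, so $m(R,M)=|P|_\a\leq comp(M)-2\leq 2\cdot(comp(M)-1)$ by the remark just made. Case~2: $R=(\l x.P)Q$, so $m(R,M)=2\cdot comp(M)-2=2\cdot(comp(M)-1)$, which meets the bound with equality. Case~3: $R=[\a]\m\b.P$, so $m(R,M)=1$; since $M$ contains such a redex, $comp(M)\geq 3$, hence $2\cdot(comp(M)-1)\geq 4\geq 1$. Case~4: $R=\m\a.[\a]P$ with $n$ arguments in $M$, so $m(R,M)=2n-1$; using $n\leq comp(M)-1$ we get $2n-1\leq 2(comp(M)-1)-1<2\cdot(comp(M)-1)$.

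I do not expect any real obstacle here; the only point requiring a moment's care is Case~4, where one must justify that a $\theta$-redex with $n$ arguments forces $comp(M)\geq n+1$. This follows because the $n$ argument subterms $P_1,\dots,P_n$ are pairwise disjoint proper subterms of $M$, each of complexity at least $1$, and the $\theta$-redex head itself contributes at least one further symbol; hence $comp(M)\geq n+1$, i.e. $n\leq comp(M)-1$, which is exactly what is needed. Case~2 shows the bound is tight, so no slack can be gained in general.
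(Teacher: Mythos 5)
Your proof is correct and follows the only natural route, namely the four-case check against Definition~\ref{ch2:mr}, which is exactly what the paper's one-line proof (``Immediate by Definition~\ref{ch2:mr}'') leaves implicit. Your explicit bounds ($|P|_\a\leq comp(P)\leq comp(M)-2$ in Case~1 and $n\leq comp(M)-1$ in Case~4) are the right justifications and fill in the details correctly.
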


\begin{proof}
Immediate by Definition \ref{ch2:mr}.
\end{proof}

The following lemma is the main lemma for obtaining the standardization result and the bound for
the standard reduction sequences in Theorem \ref{ch2:stlm}. In what follows, let $|\si|^*=max(|\si|,1)$,
where $\si$ is a reduction sequence.

\begin{lem} \label{ch2:sta}
Let $M\fl ^{\si}M'\ra ^{R}M''$ such that ${\si}$ is a standard reduction sequence.
Then there exists a standard reduction sequence $M\fl ^{\tau}M''$
such that $|\tau |\leq 1+max(m(R,M'),2)\cdot |\si|^*$. Furthermore, if $M$ is a $\l\m$I-term, then $1+|\si|\leq |\tau|$.
\end{lem}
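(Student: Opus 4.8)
The plan is to prove Lemma \ref{ch2:sta} by induction on the lexicographically ordered pair $(|\si|, comp(M))$, distinguishing the cases of Definition \ref{ch2:stlmrt} according to the shape of $M$ and the first redex $R_1$ of $\si$. The base case $|\si|=0$ is immediate: then $M=M'$ and $\tau=[R]$ works, with $|\tau|=1\leq 1+max(m(R,M'),2)\cdot 1$, and if $M$ is a $\lm$I-term then $1+|\si|=1=|\tau|$. For the inductive step, write $\si=[R_1]\#\si'$ and let $M\ra^{R_1}M_1\fl^{\si'}M'\ra^R M''$.

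First I would handle the cases where $M$ does not have an outermost redex of the form that $R$ can interfere with: the abstraction cases $M=\l x.M_1$, $M=[\a]M_1$ (non-$\rho$), $M=\m\a.M_1$ (non-$\th$), and the head-variable case $M=(x\;M_1\ldots M_n)$. In each of these, $\si$ and $R$ act inside the immediate subterms, so the induction hypothesis applied to the relevant subterm (with strictly smaller $comp$ or, via the first-subterm-to-last decomposition of Definition \ref{ch2:stlmrt}, strictly smaller $|\si|$) yields the bound; reassembling and using Lemma \ref{ch2:sthdrdx} where a head reduction gets prepended keeps standardness. The more delicate cases are the redex cases $M=(\l x.M_1)M_2\ldots M_n$ and $M=(\m\a.M_1)M_2\ldots M_n$: here either $\si$ begins by firing the head $\b$- or $\m$-redex, in which case the induction hypothesis applies to the contractum with $|\si'|<|\si|$, or $\si$ reduces strictly inside the components $M_i$, in which case one applies the induction hypothesis componentwise. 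The interesting subcase is when the extra redex $R$ being appended is itself the head redex of $M'$ created by the reductions in $\si$ --- this is exactly where $m(R,M')$ enters, and the substitution lemmas (Lemmas \ref{ch2:comp}, \ref{ch2:comput}, \ref{ch2:cmp}) must be invoked to bound the length of the standardized sequence: firing $R$ duplicates a subterm $|P|_\a$ or $2\,comp-2$ times, each copy carrying (a standardized version of) part of $\si$, which accounts for the multiplicative factor $max(m(R,M'),2)$.

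The hard part will be the bookkeeping around the critical pairs, namely the $\m/\th$-overlap and the $\rho/\th$-overlap that Definition \ref{ch2:stlmrt} legislates, and in particular the cases $M=\m\a.M_1$ and $M=[\a]M_1$ where $\si$ passes through an outermost $\th$- or $\rho$-redex. There one must split $\si$ at the first $\th$- (resp. $\rho$-)redex occurrence, apply the induction hypothesis to the prefix and to the suffix, and check that the recombined sequence still satisfies the first-$\th$-redex (resp. first-$\rho$-redex) clause of Definition \ref{ch2:stlmrt}; Lemma \ref{ch2:sthdrdx2} is what guarantees the recombination stays standard. One also has to verify that appending $R$ does not retroactively create a forbidden overlap, which is handled because the definition always resolves $\m/\th$ in favour of $\m$ and forbids the mutually-destructive $\rho/\th$ pairs, so the newly added redex $R$ can always be taken in its ``legal'' reading.

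Finally, for the lower bound $1+|\si|\leq|\tau|$ when $M$ is a $\lm$I-term, I would observe that in the $\lm$I-calculus no reduction erases a subterm (every bound variable occurs, every $\m$-bound name occurs), so no reduction in $\si$ can become vacuous after prepending or inserting other reductions, and no component gets discarded; hence the standardized sequence $\tau$ retains at least one reduction step for each of the $|\si|$ steps of $\si$ plus the final step corresponding to $R$. This is checked in each case of the induction in parallel with the upper bound, using that each of Lemmas \ref{ch2:comp}, \ref{ch2:comput}, \ref{ch2:cmp}, and the head-reduction insertions of Lemma \ref{ch2:sthdrdx}, can only lengthen (never shorten, and never drop below the additive contribution) the relevant subsequences in the $\lm$I case.
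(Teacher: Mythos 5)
Your plan matches the paper's proof essentially step for step: induction on the pair $(|\si|,comp(M))$, case analysis following Definition \ref{ch2:stlmrt}, the substitution lemmas (\ref{ch2:comp}, \ref{ch2:comput}, together with \ref{ch2:sbtrm}) supplying the multiplicative factor when $R$ is the head redex created by $\si$, splitting at the first $\th$- or $\rho$-redex with Lemma \ref{ch2:sthdrdx2} to recombine, and checking $1+|\si|\leq|\tau|$ for $\lm$I-terms in parallel within each case. The only slight imprecision is your intuition for the factor in the overlapping $\m/\th$ case: there $m(R,M')=2n-1$ arises not from duplication but from replacing the $\th$-step by $n-1$ prior $\m$-steps plus the $\rho$-expansion cost of Lemma \ref{ch2:sbtrm}, which your case analysis would uncover anyway.
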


\begin{proof}
The proof goes by induction $( |\si |,comp(M))$. The
case of $|\si|=0$ is obvious, thus we may assume $|\si|>0$. We examine the points of Definition \ref{ch2:stlmrt}.
We treat some of the more interesting cases.
\begin{enumerate}
\item $M=[\a]M_1$. If $M_1\fl^{\si}M_1'$ such that $M'=[\a]M_1'$ and there are no $\rho$-redexes as head redexes
in $\si$ including $M''$, then the induction hypothesis applies. Assume $M\fl^{\si_1}[\a]\m\b. M_2$ such that
 $\si_1\in St$ and $[\a]\m\b. M_2$ is the first $\rho$-redex in the sequence.
Let us suppose, according to point 3 (a) of Definition \ref{ch2:stlmrt}, $[\a]\m\b. M_2 \ra_{\rho}M_2[\b:=\a]\fl^{\si_2}M'$,
where $\si=\si_1\#[[\a]\m\b. M_2]\#\si_2$ and $\si_i\in St$ $(i\in \{1,2\})$. By the induction hypothesis applied to $\si_2$,
we obtain a $\tau'\in St$ such that $|\tau'|\leq 1+max(m(R,M'),2)\cdot|\si_2|^*$. Then let $\tau=\si_1\#[[\a]\m\b. M_2]\#\tau'$.
Hence $|\tau|=1+|\si_1|+|\tau'|\leq 1+|\si_1|+1+max(m(R,M'),2)\cdot|\si_2|^*\leq 1+max(m(R,M'),2)\cdot|\si|^*$.
Assume we have $[\a]\m\b. M_2 \fl^{\si_2} [\a]\m\b. M_2'$ with $M'=[\a]\m\b. M_2'$ and $M_2\fl^{\si_2}M_2'$,
by reason of point 3 (b) of Definition \ref{ch2:stlmrt}. If $R\leq M_2'$, then we obtain the result by the induction hypothesis.
Assume $R=M'$. Then  $\tau=\si_1\#[[\a]\m\b. M_2]\#\si_2'$ is appropriate,
where $\si_2'$ is $M_2[\b:=\a]\fl^{\si_2[\b:=\a]}M_2'[\b:=\a]$. The estimation for $|\tau|$ follows easily,
since $|\si_2'|=|\si_2|$.
Finally, if $M$ is a $\l\m$I-term, the result follows from the induction hypothesis by inspection of the various subcases.
For example, consider the case when $[\a]\m\b. M_2 \ra_{\rho} M_2[\b:=\a]\fl^{\si_2}M'$,
where $\si=\si_1\#[[\a]\m\b. M_2]\#\si_2$, that is, the case described by point 3 (a) of Definition \ref{ch2:stlmrt}.
If $\tau'$ is the standard reduction sequence corresponding to $\si_2$ by the induction hypothesis and
$\tau=\si_1\#[[\a]\m\b. M_2]\#\tau'$, then $1+|\si_2|\leq |\tau'|$ and we obtain the result.
\item $M=(\m\a. M_1)M_2\ldots M_n$. Let $\si$ be standard by virtue of point 5.(a) of Definition \ref{ch2:stlmrt}.
Then $(\m\a. M_1)M_2\ldots M_n\ra_{\mu}(\m\a. M_1[\a:=_rM_2]\ldots M_n)\fl^{\si'}M'$ with $\si'\in St$.
The induction hypothesis applied to $\si'$ provides us with a standard $\tau'$ with appropriate length such that
$(\m\a. M_1[\a:=_rM_2]\ldots M_n)\fl^{\tau'}M''$. By this the result follows.
Assume $\si$ is standard by reason of point 5.(b) of Definition \ref{ch2:stlmrt}.
Then $(\m\a. M_1)M_2\ldots M_n \ra^{\si_1}(\m\a. M_1')M_2\ldots M_n\fl^{\si_2}\ldots\fl^{\si_n}(\m\a. M_1')M_2'\ldots M_n'=M'$,
where $\si=\si_1\#\ldots\#\si_n$. If $R\leq M_i'$, then the induction hypothesis gives the result.
Let $R=(\m\a. M_1')M_2'$. Then $\tau$ can be chosen as
$(\m\a. M_1)M_2\ldots M_n \ra_{\m}(\m\a. M_1[\a:=_rM_2]\ldots M_n)\fl^{\tau_1}
(\m\a. M_1'[\a:=_rM_2']\ldots M_n)\fl^{\si_3}\ldots\fl^{\si_n}
(\m\a. M_1'[\a:=_rM_2']\ldots M_n')$, where $\tau_1$ is obtained from Lemma \ref{ch2:comput}. Moreover,
 $|\tau|=1+|\tau_1|+\sum_{i=3}^n|\si_i|=1+|\si_1|+\lan\si_1\ran_{(\rho,\a)}+|M_1'|_{\a }\cdot|\si_2|+\sum_{i=3}^n|\si_i|\leq 1+
2|\si_1|+|M'|_{\a }\cdot|\si_2|+\sum_{i=3}^n|\si_i|\leq 1+max(m(R,M'),2)\cdot|\si|^*.$
Assume $R=\m\a. [\a]M_1''$ is a $\th$-redex. In this case $\si_1$ is standard by virtue of point 2 (b) (ii)
of Definition \ref{ch2:stlmrt}. Let $\m\a. [\a]M_1^*$ be the first $\th$-redex such that an initial segment
$\si_1'$ of $\si_1$ produces $\m\a. [\a]M_1^*$ starting from $\m\a. M_1$. Let $\si_1=\si_1'\#\si_1''$.
Then $(\m\a. M_1)M_2\ldots M_n \ra_{\m}^{n-1}\m\a. M_1[\a:=_rM_2]\ldots [\a:=_rM_n]\fl^{\tau_1}
\m\a.[\a](M_1^*\;M_2\ldots M_n) \ra_{\th}(M_1^*\;M_2\ldots M_n)\fl^{\si_1''}(M_1'\;M_2\ldots M_n)
\fl^{\si_2}\ldots\fl^{\si_n}(M_1'\;M_2'\ldots M_n')=M'$ is standard, where $\tau_1$ is obtained from $\si_1'$
by Lemma \ref{ch2:sbtrm}. As to the length of $\tau$, we have $|\tau|=1+(n-1)+|\tau_1|+|\si_1''|+
\sum_{i=2}^n|\si_i|=1+(n-1)+|\si_1'|+(n-1)\cdot\lan\si_1'\ran_{(\rho,\a)}+|\si_1''|+\sum_{i=2}^n|\si_i|\leq 1+
|\si|+(n-1)\cdot(1+|\si|)=1+n\cdot|\si|+(n-1)\leq 1+max(m(R,M'),2)\cdot|\si|^*$. When $M$ is a $\l\m$I-term, we obtain
the result by the induction hypothesis. Let us only treat the last case, where
$(\m\a. M_1) M_2\ldots M_n \ra^{\si_1}(\m\a. M_1')M_2\ldots M_n \fl^{\si_2}\ldots\fl^{\si_n}(\m\a. M_1')M_2'\ldots M_n'=M'$
and $R=\m\a. M_1'=\m\a.[\a]M_1''$. If $\m\a.[\a]M_1^*$ is the first $\th$-redex in $\si_1$ such that
$\si_1=\si_1'\#\si_1''$ and $\tau_1$ is obtained from $\si_1'$ by Lemma \ref{ch2:sbtrm} and $\tau$ is defined as above,
then $1+|\si|=1+|\si_1'|+|\si_1''|+\sum_{i=2}^{n}|\si_i|\leq |\tau|=(n-1)+|\tau_1|+1+|\si_1''|+\sum_{i=2}^{n}|\si_i|$,
where $|\si_1'|\leq |\tau_1|$ by Lemma \ref{ch2:sbtrm}.\qedhere
\end{enumerate}
\end{proof}

\begin{defi}
Let $\si$ be the reduction sequence $M_1\ra^{R_1}M_2\ra^{R_2}\dots
\ra^{R_n}M_{n+1}$. Denote by $\ma{M}(\si )$ (the measure of \,$\si$)
the number $\prod^{n}_{i=1}(1+max(m(R_i,M_i),2))$.
\end{defi}

\begin{thm} \label{ch2:stlm}
Let $\si$ be the reduction sequence $M=M_1\ra^{R_1}M_2\ra^{R_2}\dots
\ra^{R_n}M_{n+1}$. Then there is a standard reduction
sequence $st(\si )$ such that $M_1\fl ^{st(\si )}M_{n+1}$ and
$|st(\si )|\leq  \ma{M}(\si )$. Moreover, if $M$ is a $\l\m$I-term, then $|\si|\leq |st(\si)|$ also holds.
\end{thm}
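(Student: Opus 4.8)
The plan is to prove Theorem~\ref{ch2:stlm} by induction on $n=|\si|$, using Lemma~\ref{ch2:sta} as the single-step engine. For $n=0$ the statement is trivial: the empty reduction sequence is standard, $\ma{M}(\si)=1$ (empty product), and $|\si|=0\leq 0=|st(\si)|$. For the inductive step, write $\si$ as $M=M_1\fl^{\si_0}M_n\ra^{R_n}M_{n+1}$, where $\si_0$ is the reduction sequence $M_1\ra^{R_1}\dots\ra^{R_{n-1}}M_n$ of length $n-1$. By the induction hypothesis there is a standard reduction sequence $st(\si_0)$ with $M_1\fl^{st(\si_0)}M_n$ and $|st(\si_0)|\leq\ma{M}(\si_0)=\prod_{i=1}^{n-1}(1+max(m(R_i,M_i),2))$, and, in case $M$ is a $\l\m$I-term, $|\si_0|=n-1\leq|st(\si_0)|$.

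Next I would apply Lemma~\ref{ch2:sta} to the composite $M_1\fl^{st(\si_0)}M_n\ra^{R_n}M_{n+1}$: since $st(\si_0)$ is standard, the lemma produces a standard reduction sequence $st(\si)$ with $M_1\fl^{st(\si)}M_{n+1}$ and $|st(\si)|\leq 1+max(m(R_n,M_n),2)\cdot|st(\si_0)|^*$. I would then bound the right-hand side: since $|st(\si_0)|^*=max(|st(\si_0)|,1)$ and $|st(\si_0)|\leq\ma{M}(\si_0)$ while $\ma{M}(\si_0)\geq 1$, we get $|st(\si_0)|^*\leq\ma{M}(\si_0)$, hence
$$|st(\si)|\leq 1+max(m(R_n,M_n),2)\cdot\ma{M}(\si_0)\leq (1+max(m(R_n,M_n),2))\cdot\ma{M}(\si_0)=\ma{M}(\si),$$
using $1\leq\ma{M}(\si_0)$ for the middle inequality. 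This establishes the upper bound. For the lower bound in the $\l\m$I-case, Lemma~\ref{ch2:sta} additionally gives $1+|st(\si_0)|\leq|st(\si)|$; combining with the induction hypothesis $n-1\leq|st(\si_0)|$ yields $n=1+(n-1)\leq 1+|st(\si_0)|\leq|st(\si)|$, i.e.\ $|\si|\leq|st(\si)|$, as required. (Here one should note that $M$ being a $\l\m$I-term means all $M_i$ are $\l\m$I-terms, so Lemma~\ref{ch2:sta} applies with its extra conclusion at each step; this follows from the remark that $\l\m$I-terms are closed under reduction.)

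The proof is essentially a bookkeeping induction once Lemma~\ref{ch2:sta} is in hand, so I do not expect a serious obstacle here; the genuine difficulty has already been absorbed into Lemma~\ref{ch2:sta}, whose proof handles the critical pairs and the interaction of $\rho$-, $\th$- and $\m$-redexes. The only point requiring a little care is the arithmetic: one must use $|\si|^*$ rather than $|\si|$ throughout (to make the $|\si|=0$ base case of Lemma~\ref{ch2:sta} consistent), and check that replacing $|st(\si_0)|^*$ by the full product $\ma{M}(\si_0)$, and then absorbing the additive $1$ into one extra factor, does not break the bound --- both steps rely only on $\ma{M}(\si_0)\geq 1$, which holds because it is a product of factors each $\geq 1+2=3$. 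I would also explicitly invoke that the first step of the decomposition keeps us, in the $\l\m$I-case, within $\l\m$I-terms so that the strengthened conclusion of Lemma~\ref{ch2:sta} is available at every stage of the induction.
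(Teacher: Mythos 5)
Your proof is correct and follows essentially the same route as the paper: induction on $|\si|$, peeling off the last redex $R_n$ and applying Lemma~\ref{ch2:sta} to the standardized prefix, with the same arithmetic absorbing the additive $1$ into the extra factor $1+max(m(R_n,M_n),2)$ via $\ma{M}(\si_0)\geq 1$. Your treatment is in fact slightly more explicit than the paper's on the base case $|\si|=0$ and on propagating the $\l\m$I lower bound through the induction.
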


\begin{proof}
The statement of the theorem is proved by induction on $|\si |$.
\begin{enumerate}
\item If $|\si |=1$, then our claim follows directly from Lemma \ref{ch2:sta}.
\item Let $\si =\si'\# [R_{n}]$, where $|\si'|\geq 1$.
By the induction hypothesis, we can find a standard
$st(\si')$ with appropriate length such that
$M_1\fl^{st(\si')}M_n$. Moreover, $|st(\si')|^*=|st(\si')|$.
Then, by Lemma \ref{ch2:sta}, there is a standard $M_1\fl^{\tau}M_{n+1}$ such
that $|\tau |\leq 1+max(m(R_n,M_n),2)\cdot |st(\si')|^*\leq
(1+max(m(R_n,M_n),2))\cdot |st(\si')|^*$, which yields the
result.\qedhere
\end{enumerate}
\end{proof}

\begin{thm}\label{ch2:lftmtheorem}
If $M$ is a $\l\m$I-term, then a standard reduction sequence starting from $M$ and leading to the normal form of $M$
is the leftmost reduction sequence and it is a reduction sequence of maximal length.
\end{thm}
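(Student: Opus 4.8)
The plan is to prove the two assertions separately: (a) every standard reduction sequence from $M$ to its normal form $NF(M)$ coincides with the leftmost reduction sequence $lrs(M\fl NF(M))$, so in particular it is unique; and (b) this sequence has the maximal length $\eta(M)$ among all reduction sequences from $M$. Recall that, the calculus here being simply typed, $M$ is strongly normalizing (Theorem \ref{ch2:snforlm}), so $\eta(M)$ is defined and, by the Church--Rosser property, $NF(M)$ is the unique normal form of $M$; moreover any reduction sequence from $M$ that cannot be prolonged ends at $NF(M)$. Granting (a), I would get (b) quickly: take a reduction sequence $\tau$ from $M$ with $|\tau|=\eta(M)$ (it exists by K\"onig's lemma, as noted after the definition of $\eta$); being longest it ends at a normal form, hence at $NF(M)$. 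By Theorem \ref{ch2:stlm} there is a standard $st(\tau)$ with $M\fl^{st(\tau)}NF(M)$ and, since $M$ is a $\l\m$I-term, $|\tau|\le|st(\tau)|$; but $|st(\tau)|\le\eta(M)=|\tau|$, so $|st(\tau)|=\eta(M)$. By (a), $st(\tau)=lrs(M\fl NF(M))$, so the leftmost reduction sequence --- which by (a) is the only standard one from $M$ to $NF(M)$ --- attains the length $\eta(M)$.

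For (a) I would argue by well-founded induction on the pair $(\eta(M),comp(M))$ ordered lexicographically, distinguishing whether $M$ has a head-redex. If $M\in NF$ the sequence is empty and equals the leftmost one. Suppose $hr(M)$ exists. Then $hr(M)$ must be involved in $\si$: if it were not, Lemma \ref{ch2:hdrdxlem} would give a head-redex of $NF(M)$, which is absurd. Hence, by Lemma \ref{ch2:invhdrdx}, $\si=[hr(M)]\#\si'$; and an inspection of the clauses of Definition \ref{ch2:stlmrt} shows that when a standard sequence begins with the head-redex its tail $\si'$ is again standard. Writing $M\ra^{hr(M)}M_2\fl^{\si'}NF(M)$ (with $M_2$ again a $\l\m$I-term), we have $\eta(M_2)<\eta(M)$, so the induction hypothesis gives $\si'=lrs(M_2\fl NF(M))$; since $hr(M)=lr(M)$ by Lemma \ref{ch2:hdlm}, we conclude $\si=[lr(M)]\#lrs(M_2\fl NF(M))=lrs(M\fl NF(M))$.

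The remaining case, $hr(M)$ undefined, is the core of the proof. By Lemma \ref{ch2:repr} and the definition of $hr$, such an $M$ is a (possibly empty) prefix of $\l$-abstractions, of $\m$-abstractions that are not $\th$-redexes, and of namings $[\cdot]$ that are not $\rho$-redexes, applied to a term $(x\;M_1\ldots M_n)$; as $M\notin NF$ we have $n\ge1$ and some $M_i\notin NF$. The crucial observation is that, $M$ being a $\l\m$I-term, reduction preserves the set of free variables of a term (a routine verification for each of the four rules, using that $\l\m$I-abstractions bind a variable that actually occurs, so no subterm is ever erased). Consequently, along $\si$ no $\m$-abstraction of the prefix can turn into a $\th$-redex, no naming can turn into a $\rho$-redex, and the head of the inner application remains the variable $x$. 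Therefore clauses 1, 2(a) and 3(a) of Definition \ref{ch2:stlmrt} strip the prefix off, and clause 6 then decomposes $\si$ as $\si_1\#\cdots\#\si_n$ with $M_i\fl^{\si_i}N_i$ standard, where $NF(M)$ is the same prefix over $(x\;N_1\ldots N_n)$ and every $N_i\in NF$. Since $comp(M_i)<comp(M)$, the induction hypothesis gives $\si_i=lrs(M_i\fl N_i)$, and comparing with Definition \ref{ch2:lmredex} --- which on such a term runs the leftmost sequences of the components from left to right --- yields $\si=lrs(M\fl NF(M))$. This finishes (a).

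I expect the main obstacle to be exactly this last case: one has to be sure the prefix/head-variable structure is stable under the \emph{whole} standard reduction $\si$, which is precisely where the $\l\m$I hypothesis is indispensable (invariance of free variables, hence no new $\th$-redex or $\rho$-redex is ever created at the top of the prefix), and one has to reconcile the decomposition forced by Definition \ref{ch2:stlmrt} with the left-to-right order of Definition \ref{ch2:lmredex}. The subsidiary claim that a standard sequence beginning with the head-redex has a standard tail is routine, but it has to be threaded through the induction so that the hypotheses of Lemmas \ref{ch2:hdrdxlem} and \ref{ch2:invhdrdx} remain available at every step.
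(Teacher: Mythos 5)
Your proposal is correct and follows essentially the same route as the paper: the same induction (head redex absent from $\si$ would survive into the normal form by Lemma \ref{ch2:hdrdxlem}, hence it is involved and Lemma \ref{ch2:invhdrdx} peels it off as the first step; otherwise $M$ is in head-normal form and Definition \ref{ch2:stlmrt} forces the left-to-right decomposition into standard sequences of the components), with maximality obtained from the second part of Theorem \ref{ch2:stlm}. Your additional remarks (preservation of free variables in $\l\m$I-terms to keep the prefix stable, and the explicit standardize-the-longest-sequence argument for maximality) merely spell out details the paper's proof leaves implicit.
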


\begin{proof}
Let $M$ be a $\l\m$I-term. Assume $M\fl^{\si}M'$ where $M'$ is the normal form of $M$. The proof goes
by induction on $( |\si|, comp(M))$.
We may assume $M\in HNF$. Otherwise, by Lemma \ref{ch2:hdrdxlem}, the head redex of $M$ is involved in $\si$,
then Lemma \ref{ch2:invhdrdx} yields that $\si=[hd(M)]\#\si'$. That is, if $M\notin HNF$, then the induction
hypothesis applies. Let $M=(x\;M_2\ldots M_n)$. By Definition \ref{ch2:stlmrt} there exist
$\si_i\in St$ $(2\leq i\leq n)$ such that $\si=\si_2\#\ldots\#\si_n$ and
$(x\;M_2\ldots M_n)\fl^{\si_2}(x\;M_2'\ldots M_n)\fl^{\si_3}\ldots\fl^{\si_n}(x\;M_2'\ldots M_n')$.
Then the induction hypothesis applied to $\si_i$ $(2\leq i\leq n)$ gives the result.
The leftmost reduction has a maximal length by Theorem \ref{ch2:stlm}.
\end{proof}

\section{The estimation for the lengths of the reduction sequences of the \texorpdfstring{$\lmrt$}{lambda-mu-rho-theta}-calculus}\label{section:3}

In this section we present an application of Theorem \ref{ch2:stlm} which, through the standardization, provides us
with a bound for the length of the standard reduction sequence. Making use of the fact that, by Theorem \ref{ch2:lftmtheorem},
the standard reduction sequence for a $\lm$I-term is unique and the Church-Rosser property is valid for the $\lm$I-calculus,
it does not make a difference which normalizing reduction sequence we start from and obtain its standardization. Hence, we choose a
normalization sequence $\si$ the measure of which, $\mathcal{M}(\si)$, can easily be estimated, which is, at the same time, an upper
bound for the standardization of $\si$. By Theorem \ref{ch2:stlm}, we have thus obtained an upper bound for $|\si|$.
We extend this result to the general case by finding a translation $[\![M]\!]_k$ of $M$ with an appropriate $k$,
where $[\![M]\!]_k$ is a $\lm$I-term such that lengths of the types of the redexes in $M$ is the same as those
of $[\![M]\!]_k$ and $\eta(M)\leq \eta([\![M]\!]_k)$ and the complexity of $[\![M]\!]_k$ is bounded by a linear function of
the complexity of $M$.

\subsection{The estimation the lengths of the reduction sequences of the \texorpdfstring{$\lmrt I$}{lambda-mu-rho-theta I}-calculus} \label{ch2:lmri}

In this subsection we give an estimation for the lengths of the
reduction sequences in the $\lmrt$I-calculus. To this end we define a normalization strategy such that the lengths of
reduction sequences obeying that strategy can be assessed easily and we can even establish bounds for the sizes of
the developments. Prior to this, we need the rank of a redex. Intuitively, the rank of a redex is the length of type of
the $\l$- or $\mu$-abstraction of the redex. This is exactly the quantity that can decrease by a reduction.

\begin{defi}\label{ch1:lhtype}
\hfill
\begin{enumerate}
\item The rank of a redex $R$ in a term $M$ is defined as follows.
\begin{itemize}
\item If $R=(\l x.M_1)M_2$, then $rank(R,M) =lh(type(\l x.M_1))$.
\item If $R=(\mu \a. M_1) M_2$, then $rank(R,M) =lh(type(\mu \a. M_1))$.
\item If $R=[\a]\mu \b. M$, then $rank(R,M) =lh(type(\mu \b. M))$.
\item If $R=\m \a. [\a]M$, then $rank(R,M) =lh(type(\mu \a. (\a\;M)))$.
\end{itemize}
\item The rank of a term $M$ is $rank(M)=max\{rank(R,M)\;|\;R\textrm{ is a redex in }M\}$.
\item Define $NF_k=\{M\;|\;rank(M)<k\}$.
\end{enumerate}
\end{defi}

The following lemma states that reductions do not increase the rank.

\begin{lem}\label{ch1:sbstrn}\label{ch1:typm}\label{ch2:rank}
Let $M$,$N$ be terms.
\begin{enumerate}
\item We have $rank(M[x:=N])\leq max\{rank(M),rank(N),lh(type(x))\}$ and

$rank(M[\a :=_rN])\leq max\{ rank(M), rank(N), lh(type^*(\a ))\}$,\\
where $type^*(\a )=A$ if $type (\a )=\neg A$.
\item If $M\fl M'$, then $rank(M)\geq rank(M')$.
\end{enumerate}
\end{lem}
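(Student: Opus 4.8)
The plan is to prove the two parts essentially in sequence, part (1) by induction on $comp(M)$ and part (2) by using part (1) redex by redex. For part (1), I would do an induction on the complexity of $M$, examining each term-formation clause from Definition \ref{int:termsdegroote}. The base case $M=x$: if $x$ is the substituted variable then $M[x:=N]=N$ and $rank(N)$ bounds everything; if $M$ is a different variable then $M[x:=N]=M$ has no redexes, so $rank=0$. For $M=\l y.M_1$, $M=\m\a.M_1$, $M=[\a]M_1$, or $M=(M_1\,M_2)$, the set of redexes of $M[x:=N]$ splits into (i) redexes that are residuals of redexes already present in $M_1$ (or $M_2$) after substitution, handled by the induction hypothesis, (ii) copies of redexes inside $N$, whose rank is $\le rank(N)$, and (iii) newly created redexes. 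Case (iii) is the delicate one: a new $\b$-redex can appear when some occurrence of $x$ in $M$ was in argument position and $N$ is (or reduces in head position to, but here just \emph{is}) an abstraction $\l z.N'$, so the new redex $(\l z.N')P$ is formed; its rank is $lh(type(\l z.N')) = lh(type(x))$ since $N$ has the type of $x$. Similarly a new $\m$-redex $(\m\a.N')P$ is created with rank $lh(type(\m\a.N'))=lh(type(x))$, a new $\rho$-redex $[\g]\m\b.N'$ has rank $lh(type(\m\b.N'))\le lh(type(N))=lh(type(x))$, and a $\mu$-substitution $M[\a:=_rN]$ replacing $[\a]U$ by $[\a](U\,N)$ can create a $\th$-redex or affect the type bookkeeping, where the relevant bound becomes $lh(type^*(\a))$. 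So in every creation case the new rank is bounded by $lh(type(x))$ (resp. $lh(type^*(\a))$) — this is exactly the extra term in the max.

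The $\mu$-substitution case of part (1) deserves separate care: $M[\a:=_rN]$ is built by recursively replacing every $[\a]U$ with $[\a](U\,N)$, and the induction must be set up so that the rank of the modified subterm $(U\,N)$ is controlled. Here a freshly formed application $(U\,N)$ is a $\mu$-application whose function part $U$ has type $\neg A = A\ra\bot$ where $type(\a)=\neg A$, hence $type^*(\a)=A$; the type of the new redex (if $U$ starts reducing) has length $lh(type^*(\a))$ at most. I would phrase the induction hypothesis on $comp(M)$ so that both substitution statements are proved simultaneously, since they interact through nested abstractions. Throughout, I lean on the type preservation property (the Proposition earlier) only implicitly — the typing of $N$ matching the declared type of $x$ (resp. $\a$) is what makes the bound $lh(type(x))$ legitimate.

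For part (2), it suffices to treat a single reduction step $M\ra M'$ (the general $M\fl M'$ follows by transitivity of $\ge$) and, within that, to check that $rank(R',M')\le rank(M)$ for each redex $R'$ of $M'$. Redexes of $M'$ that are residuals of redexes of $M$ disjoint from the contracted redex $R$ keep their rank, so are $\le rank(M)$. For the contractum: a $\b$-step $(\l x.P)Q\ra P[x:=Q]$ has $lh(type(x))<lh(type(\l x.P))\le rank(M)$ and $lh(type(Q))=lh(type(x))<rank(M)$, so part (1) gives $rank(P[x:=Q])\le rank(M)$; the $\m$-step is analogous using $type^*(\a)$ against $lh(type(\m\a.P))$; the $\rho$-step $[\g]\m\b.P\ra P[\b:=\g]$ only renames a $\mu$-variable, so ranks inside $P$ are unchanged and all $\le rank(M)$; and the $\th$-step $\m\a.[\a]P\ra P$ just erases a prefix. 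The one point requiring a genuine check is that a reduction does not \emph{create} a redex of larger rank than $rank(M)$, and this is precisely what part (1) buys us, since in each case the "new variable type" appearing in the max is strictly smaller than the rank of the contracted redex, which in turn is $\le rank(M)$. I expect the main obstacle to be getting the creation cases in part (1) exhaustively right — especially the $\mu$-substitution, where the reshaping $[\a]U\mapsto[\a](U\,N)$ can create $\b$-, $\m$-, $\rho$-, and $\th$-redexes all at once — and making sure the bound $lh(type^*(\a))$ (rather than $lh(type(\a))$) is the correct one there.
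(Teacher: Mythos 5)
Your overall strategy coincides with the paper's: part (1) by induction on $comp(M)$, part (2) reduced to a single step and closed using part (1). The paper gives no details at all here, and your elaboration of part (1) --- splitting the redexes of the substituted term into residuals of redexes of $M$, copies of redexes of $N$, and freshly created redexes at the interface, the last having rank bounded by $lh(type(x))$ (resp.\ $lh(type^*(\a))$) --- is correct and is exactly the content being left implicit. The $\b$- and $\m$-cases of part (2) are also sound: there the created redexes (including those formed by the contractum with its surrounding context) carry a type that is a subexpression of the type of the contracted abstraction, so part (1) and a short context check finish the argument.

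The gap is in your treatment of the $\rho$- and $\th$-steps of part (2), which you dismiss as ``renaming'' and ``erasing a prefix''. These contractions also delete syntactic material, so the contractum can combine with its context to form a new redex; this creation is not covered by part (1), which only speaks about substitution, and must be checked case by case. For $\th$, and for most $\rho$-configurations, the new redex inherits the type of the contracted one and the bound survives; but a $\rho$-redex sitting directly under a $\m$-binder is genuinely problematic. Take $x:A$ with $lh(A)$ large, $B$ atomic, $\d:\neg A$, $\b,\g:\neg B$, and $M=\m\d.[\g]\m\b.[\d]x$. The only redex of $M$ is the $\rho$-redex $[\g]\m\b.[\d]x$, of rank $lh(B)=0$, so $rank(M)=0$; the $\rho$-step yields $M'=\m\d.[\d]x$, a $\th$-redex of rank $lh(A)$. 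As you have set the argument up (and, as far as I can see, as the lemma is literally stated), $rank(M)\geq rank(M')$ fails here. So the $\rho$-case needs an explicit creation analysis rather than the remark that renaming preserves ranks, and in this particular configuration you would have to either restrict the reductions considered (compare the paper's standardness convention, which forbids a $\rho$-step from destroying an enclosing $\th$-redex) or revisit how the rank of a created $\th$-redex is accounted for; neither your proposal nor the paper's one-line proof addresses this.
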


\begin{proof}
\begin{enumerate}
\item By induction on $comp(M)$.
\item It is enough to prove if $M\ra^{R}M'$, then $rank(M)\geq rank(M')$. The proof goes by
induction on $comp(M)$ and we use the first item.\qedhere
\end{enumerate}
\end{proof}

We are now in a position to define the notion of a $k$-reduction sequence, which will denote a specific normalization strategy in what follows.

\begin{defi}\label{ch2:dgood}\label{def:ngood}
\hfill
\begin{enumerate}
\item We say that a reduction sequence $\nu$ is a $k$-reduction
sequence, if every redex in $\nu$ is of rank $k$.
\item A reduction sequence $M\fl^\nu M'$ is a $k$-normalization for a given term $M$, if it is a
$k$-reduction sequence and $M'\in NF_k$.
\item
A reduction sequence $\xi$ starting from a term is good, if, at each reduction step, it chooses the
leftmost, innermost redex of maximal rank, that is, the redex containing no
other redexes of maximal rank and stands in the leftmost position
among these redexes.
\end{enumerate}
\end{defi}

Let $\si$ be a good reduction sequence starting from $M$, assume $rank(M)=k$. Then $\si$ starts with the leftmost,
innermost redex of rank $k$ and chooses the leftmost, innermost redex of maximal rank every time.
Since $M$ is strongly normalizable, $\si$ is necessarily finite. By Lemma \ref{ch2:rank},
the ranks of the redexes involved in $\si$ form a monotone decreasing sequence.
Thus, if $\si$ is a good normalizing sequence, then the sequence of redexes of rank $k$ in $\si$ comes
to an end and $\si$ continues with a leftmost, innermost redex of maximal rank, which is less than $k$. Hence, $\si$ is the concatenation
of $l_i$-normalization sequences ($ 1 \leq i \leq s$) with $l_1 = k > l_2 > ... > l_s \geq 1$.

The next two lemmas show that good k-normalization sequences can be dissected easily so that we
are able to estimate their lengths in the proof of Lemma \ref{ch2:sizet}.

\begin{lem} \label{ch2:sep}\label{ch2:sepl}~
\hfill
\begin{enumerate}
\item Let $rank((\m \a. P) Q)=k$ and $x\notin Fv(P)$.
If $(\m \a. P) Q {\fl}^{\nu} U$ and $\nu$ is a good $k$-normalization
sequence, there are terms $P',Q',U'$ and good $k$-normalization sequences ${\nu}_1, {\nu}_2,
\nu_3$ such that $P{\fl}^{\nu_1} P'$, $Q{\fl}^{\nu_2}
Q'$, $(\m\a.P') x{\fl}^{\nu_3} U'$, $U=U'[x=Q']$ and $\nu
=\nu_1\#\nu_2\# \nu_3[x:=Q']$.
\item Let $rank((\l y. P) Q)=k$ and $x\notin Fv(P)$.
If $(\l y. P) Q{\fl}^{\nu} U$ and $\nu$ is a good $k$-normalization
sequence, there are terms $P',Q',U'$ and good
$k$-normalization sequences ${\nu}_1, {\nu}_2$ such that
$P{\fl}^{\nu_1} P'$, $Q{\fl}^{\nu_2} Q'$, $(\l y.P')
x{\ra}^{\nu_3} P'[y:=x]=P''$, $U=P''[x:=Q']$ and $\nu =\nu_1\#
\nu_2\# \nu_3[x:=Q']$.
\end{enumerate}
\end{lem}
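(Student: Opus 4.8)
The plan is to argue by induction on $|\nu|$, reading off the three blocks $\nu_1,\nu_2,\nu_3$ directly from the good strategy. In $(\m\a. P)Q$ the redexes of rank $k$ are exactly those inside $P$, those inside $Q$, and the outer $\m$-redex itself; the last of these is \emph{innermost} precisely when $P\in NF_k$ and $Q\in NF_k$. Consequently the first redex fired by a good $k$-normalization $\nu$ falls into exactly one of three cases: \textbf{(A)} it lies in $P$; \textbf{(B)} $P\in NF_k$ and it lies in $Q$; \textbf{(C)} $P,Q\in NF_k$ and it is $(\m\a. P)Q$ itself. These are the three successive phases that generate $\nu_1$, $\nu_2$, $\nu_3$, and the degenerate situations $P\in NF_k$ (resp.\ $Q\in NF_k$) are just the cases in which $\nu_1$ (resp.\ $\nu_2$) is empty and $P'=P$ (resp.\ $Q'=Q$). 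Part~2 is treated identically, the only difference being that in case (C) firing the $\b$-redex $(\l y.P')x\ra_{\b}P'[y:=x]$ already lands in $NF_k$, so $\nu_3$ collapses to this single step.

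Cases (A) and (B) are the routine inductive steps. In case (A), if $P\ra^{R}P_1$ is the first step of $\nu$, then $R$ is also the leftmost, innermost rank-$k$ redex of $P$ considered on its own (positions inside $P$ keep their relative order inside $(\m\a. P)Q$, and everything in $P$ lies to the left of $Q$), and $(\m\a. P_1)Q\fl^{\nu'}U$ is again a good $k$-normalization with $|\nu'|=|\nu|-1$, still $rank((\m\a. P_1)Q)=k$ and $x\notin Fv(P_1)$ --- types are preserved under reduction and ranks do not increase by Lemma~\ref{ch2:rank}. Applying the induction hypothesis to $\nu'$ and prepending $R$ to its first block splits $\nu$ as required; case (B) is the same with $Q$ in the role of $P$ and $\nu_1$ empty. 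The verification that $\nu_1,\nu_2,\nu_3$ are good $k$-normalizations with $P',Q',U'\in NF_k$ is inherited unchanged through the induction.

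The heart of the lemma is case (C): here $\nu=[(\m\a. P)Q]\#\nu'$ with $(\m\a. P)Q\ra_{\m}\m\a. P[\a:=_r Q]\fl^{\nu'}U$, and what has to be shown is that this reduction \emph{commutes with freezing} $Q$, i.e.\ that there is a good $k$-normalization $\m\a. P[\a:=_r x]\fl^{\nu_3'}U'$ with $\nu'=\nu_3'[x:=Q]$, $U=U'[x:=Q]$ and $U'\in NF_k$; then $\nu_1=\nu_2=\emptyset$, $P'=P$, $Q'=Q$ and $\nu_3=[(\m\a. P)x]\#\nu_3'$ complete the decomposition. The decisive quantitative fact is a type-length bound: $x$, hence $Q$, carries the argument type $B$, where $\m\a. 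P:B\ra C$ and $k=rank((\m\a. P)Q)=lh(B\ra C)>lh(B)=lh(type(Q))$. Inspecting the four redex shapes one checks that substituting $Q$ for the free variable $x$ (recall $x\notin Fv(P)$, so $x$ occurs only in argument positions created by $\m$-steps and, after $\b$-steps, possibly in head positions, but never inside a $\m$-binder) destroys no redex and creates no redex of rank $\geq k$: every newly created redex has $Q$ as its abstraction and hence rank $lh(type(Q))<k$. Since along the $k$-reduction $\nu'$ every fired redex has rank exactly $k$ and ranks never increase, each step of $\nu'$ therefore fires a redex already present, at the very same position, in the corresponding reduct of the $x$-variant, where it is moreover still the leftmost, innermost rank-$k$ redex; reading these steps back yields $\nu_3'$ with $\nu'=\nu_3'[x:=Q]$, $U=U'[x:=Q]$ and $U'\in NF_k$ (a rank-$k$ redex of $U'$ would survive into $U\in NF_k$). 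Turning this slogan ``$Q$ is inert below rank $k$'' into the exact correspondence of reduction sequences is the main obstacle; once it is established, the identity $\nu=\nu_1\#\nu_2\#\nu_3[x:=Q']$ and the goodness of the three blocks are immediate, and the same type-length observation, now applied to the single $\b$-step, gives $P'[y:=Q']\in NF_k$ as needed for part~2.
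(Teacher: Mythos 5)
Your proposal is correct and follows essentially the same route as the paper: the good strategy forces $\nu$ to first normalize $P$ to $P'\in NF_k$, then $Q$ to $Q'\in NF_k$, and the remaining phase commutes with freezing $Q'$ as a fresh variable $x$ because the argument type has length strictly below $k$, so the substitution $[x:=Q']$ creates only redexes of rank $<k$ and every rank-$k$ redex fired after the $\mu$-step (namely the $\b$-redexes $(\l y.V[\a:=_rQ'])Q'$ with $[\a]\l y.V\leq P'$) already exists in the $x$-variant. Your explicit induction on $|\nu|$ for phases (A) and (B) is just a more detailed rendering of the paper's one-line appeal to the left-to-right, innermost order of the good strategy.
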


\begin{proof}
\begin{enumerate}
\item The algorithm proceeds by eliminating the innermost $k$-redexes
from left to right, that is we have (possibly empty) $\nu_1$ and
$\nu_2$- both being $k$-normalization sequences such that
$\nu_1\# \nu_2$ is an initial subsequent of $\nu$ and $P\fl^{\nu_1}P'\in {NF}_k$,
$Q\fl^{\nu_2}Q'\in {NF}_k$. Then $\nu$ continues with reducing
$(\m\a. P') Q'$ and the redexes created by this reduction. It is
immediate to check that when reducing $(\m \a. P') Q'$, the created $k$-redexes
can only be redexes of the
form $(\l y.V[\a :=_rQ']) Q'$ for some $\l y.V$ of rank $k$ such
that $[\a]\l y.V \leq P'$, so for every $k$-redex $R$ in $\m \a. P'[\a :=_rQ']$
there is an $R'$ in $\m \a. P'[\a :=_rx]$ such that $R=R'[x:=Q']$. Reducing with these
$\b$-redexes in $\m \a. P'[\a :=_rQ']$, no more $k$-redexes are
created. This proves our assertion.
\item Analogous to the first point. \qedhere
\end{enumerate}
\end{proof}

\begin{lem}\label{ch2:sepnf}
Let $rank((\m \a. P) x)=k$, $\m \a. P\in {{NF}}_k$ and
$x\notin Fv(P)$. If $(\m \a. P)x {\fl}^{\nu} U$,
$\nu$ is a good $k$-normalization sequence,
and $U\in {{NF}}_k$,
then $|\nu | \leq comp(P)$ and $comp(U) \leq 2\cdot comp(P)$.
\end{lem}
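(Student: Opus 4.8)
The plan is to run the good $k$-normalization sequence $\nu$ explicitly and read both quantities off its shape.

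First, note that $\m\a. P$ is not a $\th$-redex: if $\m\a. P=\m\a.[\a]M$ with $\a\notin Fv(M)$, then by Definition \ref{ch1:lhtype} its rank equals $lh(type(\m\a. P))=rank((\m\a. P)x)=k$, contradicting $\m\a. P\in NF_k$. Since also $P\le\m\a. P$ gives $rank(P)<k$, the only redex of rank $k$ in $(\m\a. P)x$ is the $\m$-redex $(\m\a. P)x$ itself, so, being a $k$-normalization sequence, $\nu$ must begin with $(\m\a. P)x\ra_{\m}\m\a. P[\a:=_r x]$.

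Next I would describe the rank-$k$ redexes of $\m\a. P[\a:=_r x]$. By Lemma \ref{ch2:rank}(1), $rank(\m\a. P[\a:=_r x])\le\max\{rank(P),\,rank(x),\,lh(type^*(\a))\}=k$, so no redex of rank $>k$ ever appears, and by Lemma \ref{ch2:rank}(2) this persists along $\nu$. For an occurrence of a subterm $[\a]U\le P$, simple typing forces $U$ to have type $type(\m\a. P)$, which is an arrow type; and $\m\a. P\in NF_k$ forbids $U$ from being a $\m$-abstraction (else $[\a]U$ would be a $\r$-redex of rank $k$) and from having a redex at its root (else $U$ would contain a redex of rank $k+1$). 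Hence $U$ is a variable, a head-variable application, or a $\l$-abstraction, and the $\m$-step, which replaces $[\a]U$ by $[\a](U\,x)$, creates a redex exactly when $U=\l y. V$ is a $\l$-abstraction, namely the $\b$-redex $(\l y. V[\a:=_r x])\,x$ of rank $lh(type(\l y. V))=k$ (the $\m$-step preserves the type of $U$). This is the observation already used inside the proof of Lemma \ref{ch2:sepl}, specialized to $Q'=x$. Writing $m$ for the number of occurrences of such subterms $[\a]\l y. V\le P$, we have $m\le|P|_\a$, and these $m$ $\b$-redexes are precisely the rank-$k$ redexes of $\m\a. P[\a:=_r x]$. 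Crucially, since the argument is the \emph{variable} $x$, each reduction $(\l y. W)\,x\ra_{\b}W[y:=x]$ is a pure renaming: it creates no redex of any rank and decreases complexity by exactly $2$. Therefore $\nu$ is the initial $\m$-step followed by exactly $m$ such $\b$-steps, and $|\nu|=1+m$.

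The two bounds then follow by bookkeeping. From the definition of $comp$ one gets $|P|_\a\le comp(P)-1$ (every occurrence of $\a$ in $P$ lies inside an $[\a]$-bracket, which adds $1$ to $comp$, and $P$ has at least one variable leaf), so $|\nu|=1+m\le 1+|P|_\a\le comp(P)$. For the complexity, $comp((\m\a. P)x)=comp(P)+2$; the $\m$-step turns each of the $|P|_\a$ subterms $[\a]U$ into $[\a](U\,x)$, so $comp(\m\a. P[\a:=_r x])=comp(P)+|P|_\a+1$; and the $m$ renaming steps remove $2m$ in total. Hence $comp(U)=comp(P)+|P|_\a+1-2m\le comp(P)+(comp(P)-1)+1=2\cdot comp(P)$. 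The main obstacle is the third paragraph: identifying exactly the rank-$k$ redexes produced by the $\m$-step and checking that reducing them spawns nothing new — this is where all three hypotheses ($\m\a. P\in NF_k$, simple typing, and the argument being a variable) are genuinely needed; everything else is elementary arithmetic on complexities.
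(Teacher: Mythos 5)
Your proof is correct and follows essentially the same route as the paper's: identify the rank-$k$ redexes of $\m\a. P[\a:=_r x]$ as the $\b$-redexes $(\l y.V[\a:=_r x])\,x$ arising from occurrences $[\a]\l y.V\le P$, count at most $|P|_\a+1\le comp(P)$ steps, and use the fact that $\b$-steps with a variable argument do not increase complexity to get $comp(U)\le 1+comp(P)+|P|_\a\le 2\cdot comp(P)$. One small overstatement: a step $(\l y.W)x\ra_{\b}W[y:=x]$ sitting inside a bracket $[\a](\cdot)$ \emph{can} create a $\rho$-redex (of rank $<k$) when $W[y:=x]$ is a $\m$-abstraction, so it is not quite true that ``no redex of any rank'' is created --- but since a good $k$-normalization sequence only contracts rank-$k$ redexes and $NF_k$ only excludes redexes of rank $\ge k$, your conclusion is unaffected.
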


\begin{proof}
Since $\m \a. P\in {{NF}}_k$, in $\m \a. P[\a :=_rx]$ $k$-redexes of the form
$(\l y.Q[\a :=_rx])x$ can only occur, where $[\a]\l y.Q \leq P$
and $rank(\l y.Q)=k$. Subsequently reducing these redexes gives
$U$, which means that $U$ can be obtained in at most
$|P|_{\a}+1\leq comp(P)$ steps.
Considering the above argument, since $x$ is a variable, the $\b$-reduction steps in
$\nu$ does not increase the size of the term, so $comp(U)\leq
comp(\m \a. P[\a :=_rx])=1+comp(P)+|P|_{\a }\leq 2\cdot comp(P)$.
\end{proof}

The lemma below gives estimations for good $k$-normalization sequences. We may observe that
the obtained bounds does not depend on $k$.

\begin{lem}\label{ch2:sizet}
Let $M$ be a term such that $rank(M)=k$. If $M\fl^{\nu} M'$ and $\nu$ is a
good $k$-normalization sequence, then $comp(M')\leq 2^{comp(M)-1}$
and  $|\nu |\leq 2^{comp(M)-1}$.
\end{lem}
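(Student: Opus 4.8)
The plan is to argue by induction on $comp(M)$, splitting into cases according to the shape of $M$ supplied by Lemma~\ref{ch2:repr}. All the numerical work is done by two elementary facts: $\sum_i 2^{a_i-1}\le 2^{(\sum_i a_i)-1}$ whenever every $a_i\ge 1$ (the case of one summand being $comp(L)\le 2^{comp(L)-1}$), and $1+2^{c-1}\le 2^{c}$ for $c\ge 1$. Recall also that a good $k$-normalization only contracts redexes of rank $k$ and that, by Lemma~\ref{ch1:sbstrn}, reduction never raises the rank; in particular every immediate subterm $L$ of $M$ has $rank(L)\le k$, so we may apply the induction hypothesis when $rank(L)=k$ and, when $rank(L)<k$, $L\in NF_k$ and the subsequence relative to $L$ is empty.

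When $M$ is a variable, an abstraction, a term $[\a]M_1$ that is not a $\r$-redex, or a variable-headed application $(x\,M_1\ldots M_n)$, no rank-$k$ redex can ever be created at the head (the head is blocked or a variable), so $\nu$ is exactly the concatenation of good $k$-normalizations of the pairwise disjoint immediate subterms; since $comp(M)=1+\sum_i comp(M_i)$ in each of these cases, the induction hypothesis applied to each $M_i$ together with the two arithmetic facts yields both $comp(M')\le 2^{comp(M)-1}$ and $|\nu|\le 2^{comp(M)-1}$. The simplification-redex cases $M=[\a]\m\b. M_1$ and $M=\m\a.[\a]M_1$ go the same way: the good strategy first normalizes $M_1$ and then, if the outer redex has rank $k$, contracts it in one step, producing $M_1'[\b:=\a]$ resp.\ $M_1'$, which is again in $NF_k$ and of complexity $\le comp(M_1')$ resp.\ $comp(M_1')$; if the outer redex has rank $<k$ it is never contracted. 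Finally, a $\b$- or $\m$-headed application $(\l x.M_1)M_2\,\ora{P}$ or $(\m\a. M_1)M_2\,\ora{P}$ whose head redex has rank $<k$ is again of this type, because that head redex is never contracted and its contraction would be the only way to create a top-level redex.

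The substantive case is a $\b$- or $\m$-headed application whose head redex has rank exactly $k$. If $\ora{P}$ is non-empty, write $M=(N\,\ora{P})$ with $N=(\l x.M_1)M_2$ or $N=(\m\a. M_1)M_2$; then $comp(N)<comp(M)$ and $rank(N)=k$, so the induction hypothesis applies to $N$, and a comparison of type lengths (the rank of the whole head redex equals $lh(type(\l x.M_1))$ resp.\ $lh(type(\m\a. M_1))$, which strictly exceeds $lh(type(N'))$) shows that the head of $(N'\,\ora{P})$ carries no rank-$k$ redex; hence $\nu=\nu_N\#\nu_{P_1}\#\cdots$ and we finish using $comp(M)=comp(N)+\sum_j comp(P_j)$. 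It remains to treat $M=(\l x.M_1)M_2$ and $M=(\m\a. M_1)M_2$ with head redex of rank $k$, where I invoke the separation lemmas. In the $\b$-case, Lemma~\ref{ch2:sepl} gives $\nu=\nu_1\#\nu_2\#\nu_3[x:=M_2']$ with $M_1\fl^{\nu_1}M_1'\in NF_k$ and $M_2\fl^{\nu_2}M_2'\in NF_k$ good $k$-normalizations (induction hypothesis; $comp(M_i)<comp(M)$) and $\nu_3$ the single contraction of $(\l x.M_1')M_2'$; since $k=lh(type(\l x.M_1))=1+lh(type(x))+lh(type(M_1))$, one has $lh(type(x))<k$, so by Lemma~\ref{ch1:sbstrn} the reduct $M'=M_1'[x:=M_2']$ is already in $NF_k$ and nothing further is contracted. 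Then $|\nu|=|\nu_1|+|\nu_2|+1$ and $comp(M')\le comp(M_1')\cdot comp(M_2')$, and $comp(M)=comp(M_1)+comp(M_2)+1$ closes both bounds.

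The $\m$-case is where I expect the real obstacle to lie, because $\m$-substitution, unlike $\b$-substitution, can manufacture new rank-$k$ redexes (here $lh(type^*(\a))=k$, so Lemma~\ref{ch1:sbstrn} gives no drop in rank). This is precisely what Lemma~\ref{ch2:sep} is designed for: it yields $\nu=\nu_1\#\nu_2\#\nu_3[x:=M_2']$ with $M_1\fl^{\nu_1}M_1'\in NF_k$, $M_2\fl^{\nu_2}M_2'\in NF_k$ as before, but now $\nu_3$ is a good $k$-normalization of the auxiliary term $(\m\a. M_1')x$ with $x$ fresh, whose only freshly created rank-$k$ redexes are $\b$-redexes applied to the variable $x$; Lemma~\ref{ch2:sepnf} then bounds this by $|\nu_3|\le comp(M_1')$ and $comp(U')\le 2\cdot comp(M_1')$ for its result $U'$, so that $M'=U'[x:=M_2']$ satisfies $comp(M')\le 2\,comp(M_1')\,comp(M_2')$ and $|\nu|\le|\nu_1|+|\nu_2|+comp(M_1')$; substituting the inductive bounds for $comp(M_i')$ and $|\nu_i|$ and using $comp(M)=comp(M_1)+comp(M_2)+1$ finishes the argument. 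A secondary delicacy, which I would either absorb into Lemmas~\ref{ch2:sep} and~\ref{ch2:sepnf} or dispatch as a short extra sub-case, is that normalizing $M_1$ may expose a $\th$-redex at the head that the good strategy, being leftmost--innermost, contracts before the head $\m$-redex; contracting it leads to the same situation with one fewer $\m$-abstraction, and is therefore absorbed by the induction.
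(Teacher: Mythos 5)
Your proof follows essentially the same route as the paper's: induction on $comp(M)$ with a case split on the shape of $M$, the non-redex applications handled by showing that no rank-$k$ redex can be created at the head (via the type-length comparison), and the rank-$k$ $\b$- and $\m$-redexes dispatched through Lemmas \ref{ch2:sep} and \ref{ch2:sepnf} with the arithmetic $2\cdot 2^{a-1}\cdot 2^{b-1}=2^{a+b-1}$. The one small slip is filing ``$[\a]M_1$ that is not a $\rho$-redex'' (and likewise a $\m$-abstraction that is not a $\th$-redex) under ``no head redex is ever created'' --- such a term can \emph{reduce} to a $\rho$- (resp.\ $\th$-) redex at the head --- but the same one-extra-step accounting you already use for the simplification redexes covers this, exactly as the paper does.
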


\begin{proof}
The proof of $comp(M')\leq 2^{comp(M)-1}$ goes by induction on $comp(M)$.
\begin{enumerate}
\item The case $M=x$ or $M=\l x.M_1$ is obvious.
\item Let $M=\m \a. M_1$.
\begin{enumerate}
\item If $M=\m \a.[\a]M_1$ is a $\th$-redex of rank $k$, then,
since the algorithm eliminates $k$-redexes from bottom to up and
from left to right, we have a $\nu'\leq
\nu$ such that $\mu \a.[\a]M_1 \fl^{\nu'}\mu \a.[\a]M_1'
\ra^{R}M_1'=M'$. But in this case $M\ra_{\th}M_1\fl^{\nu'}M'$
is valid as well, thus by the induction hypothesis
$comp(M')\leq 2^{comp((M_1)-1)}<2^{comp(M)-1}$.
\item If $\m \a. M_1$ is not a $\th$-redex, but reduces to a $\th$-redex
of rank $k$ in the course of the process, then a reasoning
analogous to the above one works.
\item If $\m \a. M_1$ is not a
$\th$-redex and it neither reduces to a $\th$-redex, then the
induction hypothesis applies.
\end{enumerate}
\item Let $M=(M_1\;M_2)$.
\begin{enumerate}
\item If $M$ is not a $k$-redex, then we prove that $M$ cannot reduce to a $k$-redex.

Suppose on the
contrary that there is some initial subsequent of $\nu$ such that
it reduces $M$ to a $k$-redex, take $\nu'$ as the shortest such
reduction sequence. Suppose $M$ reduces to a $\m$-redex (the case
of a $\b$-redex is similar). In this case we have $M\fl^{\nu'}(\m
\b. N_1) N_2$, where $M_1\fl \m \b. N_1$ and $M_2\fl N_2$. Then
$M\fl^{\nu''}(N_3\;N_2)\ra^{R'}(\m \b. N_1) N_2$ must hold for
some $R'$, $\nu''$ such that $\nu'=\nu''\# [R']$ and for some
$N_3$, $N_3$ not beginning with a $\m$. This means $N_3=R'$ would
be again a $k$-redex, but a straightforward examination of the
possible cases shows it is impossible.

Hence we have
$M'=(M_1'\;M_2')$, $\nu =\nu_1\#
\nu_2$ for some $k$-reduction sequences sequences $\nu_1$, $\nu_2$ and
$M_i\fl^{\nu_i}M_i'$ $(i\in \{ 1,2\})$. Thus by the induction
hypothesis
$comp(M')= comp(M_1')+comp(M_2')\leq
2^{comp(M_1)-1}+2^{comp(M_2)-1} \leq  2^{comp(M)-1}$.

\item If $M$ is a $k$-redex and $M=(\m \a. M_1) M_2$, then $M$ is involved in $\nu$ as a
$\mu$-redex. By Lemma
\ref{ch2:sep}, we have $M_1'$, $M_2'$, $M''$ and $\nu_1$, $\nu_2$,
$\nu_3$ such that
$M_1{\fl}^{\nu_1} M_1'$, $M_2{\fl}^{\nu_2} M_2'$,
$(\m \a. M_1') x{\fl}^{\nu_3} M''$,
$M'=M''[x:=M_2']$ and $\nu =\nu_1\# \nu_2\# \nu_3[x:=M_2']$, provided
$x\notin Fv(M_1)$. From this, by Lemma \ref{ch2:sepnf} and by the
induction hypothesis,
$comp(M')=comp(M''[x:=M_2'])
=comp(M'')+|M''|_x\cdot (comp(M_2')-1)
<comp(M'')\cdot comp(M_2')\leq 2\cdot comp(M_1')\cdot
comp(M_2') \leq  2\cdot 2^{comp(M_1)-1}\cdot
2^{comp(M_2)-1}<2^{comp(M)-1}$.
The case $M=(\l x.M_1)M_2$ is similar.
\end{enumerate}
\item Let $M=[\a]M_1$.
\begin{enumerate}
\item If $M$ does not reduce to a $k$-redex, then the result is obvious.
\item If $M$ is either a $k$-redex, or reduces to a $k$-redex, then
there is a $\nu'$ and a $\m \b. M_2\in {{NF}}_k$ such that
$[\a]M_1 \fl^{\nu'}[\a]\m \b. M_2 \ra^R M_2[\b :=\a ]$ and
$\nu'\# [R]=\nu$. The induction hypothesis for $M_1$ gives the
result.
\end{enumerate}
\end{enumerate}

We prove $|\nu |\leq
2^{comp(M)-1}$ by induction on $comp(M)$. The only interesting case is when $M$ is a redex
of rank $k$. Let, for example, $M=(\mu \a. M_1)M_2$. Since $\nu$
is a $k$-normalization sequence we can assume again that $M$ is
involved in $\nu$. By Lemma \ref{ch2:sep}, we have $M_1'$, $M_2'$
and $k$-normalization sequences ${\nu}_1, {\nu}_2,
\nu_3$ such that $M_1{\fl}^{\nu_1} M_1'$,
$M_2{\fl}^{\nu_2} M_2'$, $(\m \a. M_1)'x{\fl}^{\nu_3} M''$,
$M'=M''[x=M_2']$ and $\nu =\nu_1\# \nu_2\#
\nu_3[x:=M_2']$, provided $x\notin Fv(M_1)$. Then, using
Lemma \ref{ch2:sepnf} and the induction hypothesis, we obtain
$|\nu | = |\nu_1|+|\nu_2|+|\nu_3[x:=M_2']|=|\nu_1|+|\nu_2|+|\nu_3|
\leq 2^{comp(M_1)-1}+2^{comp(M_2)-1}+2^{comp(M_1)-1} =
2^{comp(M_1)}+2^{comp(M_2)-1}\leq 2^{comp(M)-1}.$
\end{proof}

\begin{defi} Let $\emph{tower}$ defined by
$\emph{tower}(n,m)=\left\{ \begin{array}{ll} m & \tx{ if
}n=0,\\ 2^{{\emph{tower}}(n-1,m)} &
\tx{ if }n>0.\end{array}\right.$

In other words, the integer $\emph{tower}(n,m)$ is $2^{\cdot^{\cdot^{\cdot^{2^{m}}}}}$, where $2$ is repeated $n$ times.
\end{defi}

\begin{thm}\label{ch2:msg}
Let $M$ be a term such that $rank(M)=k$. If $M\fl^{\si}N$,
$\si$ is a good reduction sequence and $N\in
NF$, then $\ma{M}(\si )<\emph{tower}(k+1,comp(M))$.
\end{thm}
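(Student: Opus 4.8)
The plan is to argue by induction on $k=rank(M)$. Since by Lemma~\ref{ch2:rank} the ranks of the redexes reduced along a good sequence form a non-increasing sequence, the initial block of $\si$ consisting of its rank-$k$ redexes is precisely the good $k$-normalisation of $M$ (see the discussion after Definition~\ref{def:ngood}). So write $M\fl^{\nu_1}M_1$ with $M_1\in NF_k$; then $\si=\nu_1\#\si'$, where $M_1\fl^{\si'}N$ is a good reduction sequence to $NF$ and $rank(M_1)<k$ (if $M_1\in NF$, then $\si'$ is empty). Since the measure of a concatenation is the product of the measures, $\ma{M}(\si)=\ma{M}(\nu_1)\cdot\ma{M_1}(\si')$, and it suffices to bound the two factors.

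For $\ma{M_1}(\si')$ the induction hypothesis gives $\ma{M_1}(\si')<\emph{tower}(rank(M_1)+1,comp(M_1))\le\emph{tower}(k,comp(M_1))$; combining this with the bound $comp(M_1)\le 2^{comp(M)-1}$ of Lemma~\ref{ch2:sizet} and the elementary identity $\emph{tower}(n,2^m)=\emph{tower}(n+1,m)$ gives $\ma{M_1}(\si')<\emph{tower}(k+1,comp(M)-1)$. For $\ma{M}(\nu_1)$ the main ingredient is the estimate $\ma{M}(\nu_1)<\emph{tower}(2,comp(M))$, which I would prove by induction on $comp(M)$ along the same case analysis as the proof of Lemma~\ref{ch2:sizet} (stated for an arbitrary good $j$-normalisation, so that the recursion into subterms is available). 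There Lemma~\ref{ch2:sta0} bounds each factor $1+max(m(R_i,M_i),2)$ by $2\,comp(M_i)$, and one must carry the complexities of the intermediate terms alongside the measure, so that in the application cases the measure visibly factors over the sub-sequences provided by Lemmas~\ref{ch2:sep} and~\ref{ch2:sepnf}; in particular, in the $\m$-redex case the steps of $(\m\a.P')x\fl^{\nu_3}U'$ are $\b$-reductions that substitute the \emph{variable} $x$, so $|\nu_3|$ and the complexities occurring there remain small, and replacing $x$ by $Q'$ affects the measure only through those already-controlled complexities.

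It then remains to combine the two bounds. When $k=1$ we have $NF_k=NF$, so $\si=\nu_1$ and $\ma{M}(\si)=\ma{M}(\nu_1)<\emph{tower}(2,comp(M))=\emph{tower}(k+1,comp(M))$. When $k\ge 2$ we have $\ma{M}(\si)<\emph{tower}(2,comp(M))\cdot\emph{tower}(k+1,comp(M)-1)$, and after taking one base-$2$ logarithm it suffices to verify $2^{comp(M)}+\emph{tower}(k,comp(M)-1)<\emph{tower}(k,comp(M))$. This holds because any term containing a redex has complexity at least $3$, whence $comp(M)<2^{comp(M)-1}$ and so $2^{comp(M)}<2^{2^{comp(M)-1}}$, while for $k\ge 2$ the difference $\emph{tower}(k,comp(M))-\emph{tower}(k,comp(M)-1)$ is itself already $\ge 2^{2^{comp(M)-1}}$.

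The genuine obstacle is the auxiliary estimate $\ma{M}(\nu_1)<\emph{tower}(2,comp(M))$ for a good $j$-normalisation. The crude bound $\ma{M}(\nu_1)\le F^{|\nu_1|}$, where $F$ is the largest factor in $\nu_1$, is far too weak, since both $F$ and $|\nu_1|$ may be of size about $2^{comp(M)}$; one has to exploit the recursive structure of good normalisations exactly in the spirit of Xi's argument for the $\l$-calculus, so that a reduction contributing a large factor is compensated by the correspondingly small number of steps that can follow it. The additional $\m$-, $\r$- and $\th$-redexes, and especially the need to account for the arguments of those $\th$-redexes that are simultaneously $\m$-redexes (which is precisely what forced the enlarged value of $m(R,M)$ in Definition~\ref{ch2:mr}), make this counting the delicate part.
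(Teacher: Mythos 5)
Your skeleton --- induction on $k$, splitting $\si$ into the good $k$-normalisation block $\nu_1$ followed by a good normalising sequence from a term of smaller rank, multiplicativity of $\ma{M}$ over concatenation, then the tower arithmetic --- is exactly the paper's, and your closing arithmetic is sound \emph{given} your two intermediate bounds. The genuine gap is the bound $\ma{M}(\nu_1)<\textrm{tower}(2,comp(M))$ on the measure of the $k$-normalisation block: you correctly identify it as the crux, but you only announce a plan for it (a refined induction on $comp(M)$ re-running the case analysis of Lemma~\ref{ch2:sizet} while carrying complexities alongside measures, factoring over the decompositions of Lemmas~\ref{ch2:sep} and~\ref{ch2:sepnf}) and then explicitly defer it as ``the delicate part''. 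Since everything in the theorem beyond Lemma~\ref{ch2:sizet} lives in precisely this estimate, the proposal as written reduces the theorem to an unproven claim.

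Moreover, the reason you give for needing that delicate argument is the point where you part company with the paper, and it is overstated. The paper bounds the block by exactly the ``crude'' product estimate you reject: by Lemmas~\ref{ch2:sta0} and~\ref{ch2:sizet} every factor satisfies $1+\max(m(R_i,M_i),2)\le 2\cdot comp(M_i)-1<2^{comp(M)}$ and the block has at most $2^{comp(M)-1}$ steps, whence $\ma{M}(\nu_1)<2^{comp(M)\cdot 2^{comp(M)-1}}$. This is not ``far too weak'': it exceeds $\textrm{tower}(2,comp(M))$ only by a factor of roughly $comp(M)/2$ in the topmost exponent (it is at most $\textrm{tower}(2,2\,comp(M)-2)$), not by a tower level, and such an excess is meant to be absorbed in the slack of the final comparison $\textrm{tower}(1,comp(M))+\sum_{i=2}^{k}\textrm{tower}(i,comp(M)-1)\le\textrm{tower}(k,comp(M))$ --- which is why the paper carries its induction hypothesis in the looser additive form $\ma{M}(\si)<\textrm{tower}(1,\textrm{tower}(1,comp(M))+\sum_{i=2}^{k}\textrm{tower}(i,comp(M)-1))$ rather than in your tighter per-level form. (Your tighter claim $\ma{M}(\nu_1)<\textrm{tower}(2,comp(M))$ is exactly what the crude computation fails to deliver once $comp(M)\ge 3$; insisting on it is what manufactures the need for the unproved refined counting.) To close the gap, either carry out the refined induction you sketch, or --- far more economically --- adopt the crude per-block bound, keep the induction hypothesis in the paper's additive form, and check that the extra $\log_2 comp(M)$ in the top exponent is swallowed by the final tower inequality.
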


\begin{proof}
We first prove by induction on $k$ that\\
$\ma{M}(\si )<{\textrm{tower}}(1,{\textrm{tower}}(1,comp(M))+
\sum_{i=2}^{k}{\textrm{tower}}(i,comp(M)-1))$.
\begin{enumerate}
\item If $k=1$, then $\si$ is a 1-normalization sequence. Suppose $\si$
is $M=M_1\ra^{R_1}M_2\ra^{R_2}\dots \ra^{R_{n-1}}M_n\ra^{R_n}M_{n+1}$ for
some $n\geq 1$. We have, by Lemmas \ref{ch2:sta0} and \ref{ch2:sizet}, $1+max(m(R_i,M_i),2)\leq 2\cdot
comp(M_i)-1\leq 2\cdot 2^{comp(M)-1}-1<2^{comp(M)}$, then
$\ma{M}(\si )=\prod^{n}_{i=1}(1+max(m(R_i,M_i),2))\;<$\\
$\; \prod^{n}_{i=1}2^{comp(M)} =2^{n\cdot comp(M)}$.
Again, by Lemma \ref{ch2:sizet}, we obtain $n=|\si |\leq
2^{comp(M)-1}$, so $\ma{M}(\si )<2^{comp(M)\cdot
2^{comp(M)-1}}\leq 2^{2^{comp(M)}}=$\\
$\textrm{tower}(1,\textrm{tower}(1,comp(M)))$.

\item Let $rank(M)=k+1$ and $k\geq 1$. Assume
$M\fl^{\si'}M'\fl^{\si''}N\in {NF}$, where $\si'$ is a
$k+1$-normalization sequence starting from $M$. By the induction
hypothesis, we have
$\ma{M}(\si '')<{\textrm{tower}}(1,{\textrm{tower}}(1,comp(M'))+
\sum_{i=2}^{k}{\textrm{tower}}(i,comp(M')-1))$.
As above, we obtain again $\ma{M}(\si')<2^{2^{comp(M)}}$. Then,
using the multiplicity of $\ma{M}$ and Lemma
\ref{ch2:sizet}, we can assert
$\ma{M}(\si ) =
\ma{M}(\si')\cdot \ma{M}(\si'')<
2^{2^{comp(M)}}\cdot{\textrm{tower}}\Big(1,{\textrm{tower}}(1,comp(M'))\;+\;
\sum_{i=2}^{k}{\textrm{tower}}(i,comp(M')-1)\Big) <
2^{2^{comp(M)}}\cdot {\textrm{tower}}\Big(1 ,{\textrm{tower}}(1,\textrm{tower}(1,comp(M)-1)) {} +$ \\
$\sum_{i=2}^{k}{\textrm{tower}}(i,\textrm{tower}(1,comp(M)-1))\Big) =$\\
$2^{2^{comp(M)}}\cdot 2^{\underbrace{{2^{2^{comp(M)-1}}+\dots +
2^{\cdot^{\cdot^{\cdot^{2^{comp(M)-1}}}}}}}_k} =$\\
${\textrm{tower}}(1,{\textrm{tower}}(1,comp(M))+ \sum_{i=2}^{k+1}{\textrm{tower}}(i,comp(M)-1))$.

\end{enumerate}
Finally, we prove, by induction on $k$, that\\
${\textrm{tower}}(1,comp(M))+
\sum_{i=2}^{k}{\textrm{tower}}(i,comp(M)-1)\leq
\textrm{tower}(k,comp(M))$.\\
The case $k=1$ is obvious.
Let $k=n+1$ and $n\geq 1$. Applying the induction hypothesis, we
obtain
${\textrm{tower}}(1,comp(M))+
\sum_{i=2}^{n+1}{\textrm{tower}}(i,comp(M)-1) {} =$\\
$\;\underbrace{{2^{comp(M)}+2^{2^{comp(M)-1}}+\dots
+2^{\cdot^{\cdot^{\cdot^{2^{comp(M)-1}}}}}}}_{n+1} {} \leq$\\
$\textrm{tower}(n,comp(M))+\textrm{tower}(n+1,comp(M)-1)
{} <\;\textrm{tower}(n+1,comp(M))$.
\end{proof}

\begin{cor}\label{ch2:bfi}
Let $M$ be a $\l \m I$-term of rank $k$. Every reduction
sequence starting from $M$ has length less than $\twe
(k+1,comp(M))$.
\end{cor}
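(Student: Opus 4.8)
The plan is to combine the machinery already built up in the paper. Let $M$ be a $\lm$I-term of rank $k$, and let $\sigma$ be an arbitrary reduction sequence starting from $M$. I would first invoke Theorem \ref{ch2:stlm}: since $M$ is a $\lm$I-term, there is a standard reduction sequence $st(\sigma)$ with $M\fl^{st(\sigma)}M_{n+1}$ and, crucially, $|\sigma|\leq|st(\sigma)|\leq\ma{M}(\sigma)$. So it suffices to bound $\ma{M}(\sigma)$ for a cleverly chosen $\sigma$. Here is where the uniqueness results pay off: by the Church-Rosser property of the $\lm$I-calculus and by Theorem \ref{ch2:lftmtheorem}, the normal form of $M$ is unique and the leftmost (standard) normalizing reduction sequence is the longest one. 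So without loss of generality I may take $\sigma$ to be a \emph{good} reduction sequence terminating in a normal form $N\in NF$ (a good normalizing sequence exists and is finite by strong normalization, Theorem \ref{ch2:snforlm}, together with Lemma \ref{ch1:typm} which guarantees ranks do not increase).

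Next I would apply Theorem \ref{ch2:msg} directly to this good reduction sequence: since $rank(M)=k$, $\sigma$ is good, and $N\in NF$, we get $\ma{M}(\sigma)<\twe(k+1,comp(M))$. Stitching the two inequalities together: for the fixed good normalizing $\sigma$ we have $|st(\sigma)|\leq\ma{M}(\sigma)<\twe(k+1,comp(M))$, and since $st(\sigma)$ is the standard normalizing sequence for $M$, by Theorem \ref{ch2:lftmtheorem} it is the leftmost reduction sequence and has maximal length among all reduction sequences starting from $M$. Hence \emph{any} reduction sequence $\tau$ starting from $M$ satisfies $|\tau|\leq|st(\sigma)|<\twe(k+1,comp(M))$, which is exactly the claim.

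I do not expect any genuine obstacle here; the corollary is a straightforward harvesting of Theorems \ref{ch2:stlm}, \ref{ch2:lftmtheorem}, and \ref{ch2:msg}. The only point requiring a moment's care is the logical flow: one must present the argument in the right order so that "maximal length" is only invoked for $\lm$I-terms (where Theorem \ref{ch2:lftmtheorem} applies) and so that the good reduction sequence $\sigma$ is fixed once and used both as the input to Theorem \ref{ch2:msg} and, via its standardization, as the maximal-length witness. One should also note that an arbitrary $\tau$ need not itself be good or standard, so the comparison $|\tau|\leq|st(\sigma)|$ must be routed through the maximality statement of Theorem \ref{ch2:lftmtheorem} rather than through any direct bound on $\tau$. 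With that ordering in place the proof is essentially a two-line citation.
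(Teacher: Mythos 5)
Your proposal is correct and follows essentially the same route as the paper: fix a good normalizing reduction sequence $\si$, bound $\ma{M}(\si)$ by Theorem \ref{ch2:msg}, standardize it via Theorem \ref{ch2:stlm}, and conclude by the maximality of the standard normalizing sequence from Theorem \ref{ch2:lftmtheorem}. Your added remark that an arbitrary reduction sequence must be compared through the maximality statement rather than through its own measure is exactly the (implicit) point of the paper's final sentence.
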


\begin{proof}
Let $N$ be the normal-form of $M$. By Definition
\ref{def:ngood} and Theorem \ref{ch2:msg}, there exists a $\si$
such that $M\fl^{\si}N$ and $\ma{M}(\si )<\tw (k+1,comp(M))$. By
Theorem \ref{ch2:stlm}, there is a standard $\si'$ such
that $M\fl^{\si'}N$ and $|\si'|<\ma{M}(\si )$. The result follows
now from Theorem \ref{ch2:lftmtheorem}.
\end{proof}

\subsection{Some properties of the function \texorpdfstring{$\eta$}{eta}}\label{ch2:subseceta}

In the next subsection we undertake the task of estimating the lengths of reduction sequences
starting from an arbitrary term by transforming the starting term into a $\lm$I-term and estimating
an upper bound for the reduction sequences of the $\lm$I-term. In order to make the estimation work,
we have to prove that the longest reduction sequences of the transformed terms are at least as long
as those of the original terms. To this end, we perform some calculations concerning
longest reduction sequences of terms and their reducts. This subsection prepares the treatment of
the general case. The lemmas of the subsection compare the lengths of the longest reduction sequences starting
from a redex and from one of its reducts.

\begin{lem}\label{ch2:lhm}
Let $M,N$  and $\ora{P}$ be $\l \m I$-terms. If $\a \notin Fv(N)$, then \\
$\eta ((\m \a. \langle M,[\a]z\rangle)\ora{P})+ \eta (N)\leq \eta ((\m \a. \langle M,[\a](z\;N)\rangle)\ora{P})$.
\end{lem}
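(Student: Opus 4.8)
The plan is to isolate and prove a slightly more flexible \emph{simulation} statement, and read off the lemma as a special case. Fix the variable $z$ and a $\lm$I-term $N$, and -- using that $\a\notin Fv(N)$ is assumed precisely so that the occurrence of $\a$ inside $N$ stays free (not captured by the $\m\a$) -- agree by a harmless $\a$-conversion that \emph{every} variable bound anywhere in the terms under consideration is chosen outside $Fv(N)\cup\{z\}$. For a $\lm$I-term $T$ and a nonempty set $\mathcal{O}$ of (free) occurrences of $z$ in $T$, write $T^{\sharp}$ for the term obtained from $T$ by replacing each occurrence in $\mathcal{O}$ by $(z\;N)$. The claim I would prove is
$$\eta(T)+\eta(N)\leq\eta(T^{\sharp}).$$
Taking $T:=(\m\a.\lan M,[\a]z\ran)\ora{P}$, $T^{\sharp}:=(\m\a.\lan M,[\a](z\;N)\ran)\ora{P}$ (both $\lm$I-terms) and letting $\mathcal{O}$ be the singleton formed by the displayed occurrence of $z$ gives exactly the lemma. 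Note that the argument uses no property of $\lan\cdot,\cdot\ran$ at all: it only tracks a marked variable occurrence.

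I would prove the claim by induction on $\eta(T)$ (every $\lm$I-term is strongly normalizing). If $\eta(T)=0$, then one copy of $N$ is a subterm of $T^{\sharp}$, so carrying out a longest reduction of $N$ at its position inside $T^{\sharp}$ shows $\eta(N)\leq\eta(T^{\sharp})$, which is the claim. For the induction step, pick a step $T\ra^{R}T_1$ that begins a reduction of $T$ of maximal length, so that $\eta(T_1)=\eta(T)-1$ and $T_1$ is again a $\lm$I-term. By a case analysis on the kind of $R$ (using Lemma~\ref{ch2:repr} to locate it), the subterm of $T^{\sharp}$ at the position of $R$ is again a redex $R^{\sharp}$ of the same kind: decorating leaves $z$ by $(z\;N)$ leaves the patterns $(\l x.U)V$, $(\m\g.U)V$ and $[\g]\m\d.U$ untouched, and for a $\th$-redex $\m\g.[\g]U$ the side condition $\g\notin Fv(U)$ survives because $\g\notin Fv(U^{\sharp})$, since $\g\notin Fv(N)$ by the convention (and by the hypothesis when $\g=\a$). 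Contracting, $T^{\sharp}\ra^{R^{\sharp}}T_1^{\sharp}$, and one checks $T_1^{\sharp}$ is exactly $T_1$ with the residuals of $\mathcal{O}$ re-decorated by $N$; the only step needing attention is the $\m$-step, where the $\m$-substitution appends an argument to each $[\g]W$ and thus turns $[\a](z\;N)$ into $[\a](z\;N\;\ora{P'})$ -- the $\sharp$-image of $[\a](z\;\ora{P'})$ -- while leaving $N$ itself alone, again because $\g\notin Fv(N)$ forbids any argument from slipping inside $N$. Since $\lm$I-reduction erases no subterm (in particular no variable occurrence), each occurrence in $\mathcal{O}$ has at least one residual in $T_1$, so the re-decorated set is nonempty and the induction hypothesis applies to $T_1,T_1^{\sharp}$: $\eta(T_1)+\eta(N)\leq\eta(T_1^{\sharp})$. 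Hence $\eta(T)+\eta(N)=1+\eta(T_1)+\eta(N)\leq 1+\eta(T_1^{\sharp})\leq\eta(T^{\sharp})$, the last step using $T^{\sharp}\ra^{R^{\sharp}}T_1^{\sharp}$.

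The only genuine work is the case analysis in the induction step: checking that $R^{\sharp}$ really is a redex of the same kind, and that contracting it reproduces $T_1$ with the marked occurrences carrying only the extra decoration $(z\;N)$. The hypothesis $\a\notin Fv(N)$ enters exactly there -- once through the $\th$ side condition and once through the clause of $\m$-substitution that could otherwise push arguments inside $N$ -- while the $\lm$I-hypothesis enters through the non-erasure property, which keeps the marked set nonempty so that the induction hypothesis applies in its stated form. I expect no deeper obstacle; what must be avoided is running the induction directly on terms of the rigid shape $(\m\a.\lan M,[\a]z\ran)\ora{P}$, because a head $\m$-step turns $[\a]z$ into $[\a](z\;P_1)$ and a $\th$-step deletes the outer $\m\a$, so that shape is not preserved. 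Passing to the flexible claim above is precisely what makes the recursion go through.
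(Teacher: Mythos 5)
Your proof is correct, but it takes a genuinely different route from the paper's. The paper first restricts to the case where the given reductions $\si_1$ of $U=(\m\a.\lan M,[\a]z\ran)\ora{P}$ and $\si_2$ of $N$ are standard, invoking the second half of Theorem~\ref{ch2:stlm} (standardization does not shorten reductions of $\lm$I-terms); it then uses the explicit shape of a standard reduction of $U$ involving the head redex --- the $\m$-steps consuming all of $\ora{P}$ turn $[\a]z$ into $[\a](z\;M_1\dots M_n)$, after which the left component and then the arguments are reduced --- and obtains the required reduction of $V$ by performing the same steps and splicing $\si_2$ in at the point where $N$ sits as the first argument of $z$. You instead generalize the statement to an arbitrary nonempty set of marked free occurrences of $z$ and run a direct one-step simulation by induction on $\eta(T)$, never appealing to standardization. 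The generalization is exactly the right move, since, as you observe, the shape $(\m\a.\lan M,[\a]z\ran)\ora{P}$ is not closed under reduction; and the non-erasure property of $\lm$I-reduction is precisely what keeps the marked set nonempty. Your route is more self-contained (it does not lean on Theorem~\ref{ch2:stlm}, nor on the paper's unargued normalization ``we may suppose that $\m\a.\lan M,[\a]z\ran$ is reduced with all of its arguments''), at the price of the residual bookkeeping in the $\b$- and $\m$-cases; the paper's route is shorter because the standardization machinery has already been paid for. Two small corrections: strong normalization holds here because the terms are simply typed, not because they are $\lm$I-terms (untyped $\l$I-terms need not normalize), so your induction on $\eta(T)$ is legitimate only in the typed setting the paper works in; and the hypothesis $\a\notin Fv(N)$ is not about an occurrence of $\a$ inside $N$ ``staying free'' --- it says $N$ contains no free $\a$ at all, so that the $\m$-substitutions $[\a:=_r\cdot]$ leave $N$ untouched, which is exactly how you do use it later in the $\m$- and $\th$-cases.
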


\begin{proof}
Let $U=(\m \a. \langle M,[\a]z\rangle)\ora{P}$, $V=(\m
\a. \langle M,[\a](z\;N)\rangle)\ora{P}$. If $\ora{P}$ is empty, the result
is trivial, so may assume $\ora{P}$ is not empty and its
components are $M_1,\dots ,M_n$. We are going to prove if
$U\fl^{\si_1}U'$, $N\fl^{\si_2}N'$ for some $\si_1$, $\si_2$,
$U'$, $N'$, then we have a reduction sequence $\nu$ of $V$ such
that $|\si_1|+|\si_2|\leq |\nu|$. By the second part of Theorem
\ref{ch2:stlm}, it is enough to restrict our attention to the case
when $\si_1$ and $\si_2$ are standard. We may assume that the
head-redex of $U$ is involved in $\si_1$, otherwise the result is
trivial. Furthermore, we may suppose that $\m \a. \langle M,[\a]z\rangle $
is reduced in $|\si_1|$ with all of its arguments
$M_1,\dots ,M_n$. Then $\si_1$ is of the form\\
$U \fl^{\xi} \m \a. \langle M[\a
:=_rM_1]\dots [\a :=_rM_n],[\a](z\;M_1\dots M_n)\rangle$\\
$\fl^{\zeta} \m \a. \langle M', [\a](z\;M_1\dots M_n)\rangle \;\fl^{\zeta^*}\;
\m \a. \langle M',[\a](z\;M_1'\dots M_n')\rangle$,\\
where $M[\a :=_rM_1]\dots [\a :=_rM_n]\fl^{\zeta}M'$
and $\zeta^*=\zeta_1\# \dots \# \zeta_n$ with
$M_i\fl^{\zeta_i}M_i'$ for $1\leq i\leq n$. Let $\xi'$ be
$V\fl^{\xi'}\m
\a. \langle M[\a :=_rM_1]\dots [\a :=_rM_n],[\a](z\;N\;M_1\dots M_n)\rangle
$, then choosing $\nu$ as
$\nu =\xi'\# \zeta \# \si_2 \# \zeta^*$ is appropriate.
\end{proof}

\begin{lem}\label{ch2:etr}\label{ch2:etr2}
Let $M=(\l x. M_1) M_2\ora{P}$ and $N=(M_1[x:=M_2]\;\ora{P'})$.
\begin{enumerate}
\item If $x\in Fv(M_1)$ and $N$ is strongly normalizable, then $M$ is also strongly normalizable
and $\eta (M)= \eta (N)+1$.
\item If $x\notin Fv(M_1)$ and $N,M_2$ are strongly normalizable, then $M$ is also strongly normalizable
and $\eta (M)= \eta (N)+\eta (M_2)+1$.
\end{enumerate}
\end{lem}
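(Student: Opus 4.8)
The plan is to treat the two cases by reducing them to already-established machinery about standard reduction sequences, together with a direct analysis of the head ($\beta$-)redex of $M$. First I would observe that $M=(\l x.M_1)M_2\ora{P}$ has a $\b$-redex $(\l x.M_1)M_2$ as its head redex, and that reducing it yields exactly $N=(M_1[x:=M_2]\ora{P})$ (I will write $\ora{P'}=\ora{P}$, as the statement does, since the arguments are carried along unchanged). So $M\ra_\b N$ in one step, which gives $\eta(M)\geq \eta(N)+1$ immediately once we know $M$ is strongly normalizable; strong normalizability of $M$ in turn follows from that of $N$ (and $M_2$ in case 2) since any infinite reduction from $M$ would, after finitely many steps, either fire this head $\b$-redex — leaving an infinite reduction from a reduct of $N$ — or live entirely inside $M_2$ and the $M_i$'s, contradicting their strong normalizability. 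The real content is the reverse inequality $\eta(M)\leq \eta(N)+1$ (case 1) resp. $\eta(M)\leq\eta(N)+\eta(M_2)+1$ (case 2): we must show no reduction sequence from $M$ can be \emph{longer} than firing the head redex first and then proceeding optimally.

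For this I would invoke Theorem \ref{ch2:stlm}: since $M$ is a $\l\m$I-term in the hypotheses (all of $M_1,M_2,\ora{P}$ are $\l\m$I-terms, and $x\in Fv(M_1)$ in case 1 makes $\l x.M_1$ a $\l\m$I-term; in case 2 the term is still a $\l\m$I-term under the modified binder formation only if $x\in Fv(M_1)$ — here I would instead argue directly, see below), it suffices to bound the length of an arbitrary \emph{standard} reduction sequence $M\fl^\si M'$ with $M'$ the normal form, because by Theorem \ref{ch2:stlm} every reduction sequence is majorized in length by a standard one reaching the same normal form, and by Theorem \ref{ch2:lftmtheorem} the standard normalizing sequence is the unique leftmost one and is of maximal length. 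Now apply the structure of Definition \ref{ch2:stlmrt}, point 4, to the standard reduction sequence starting from $M=(\l x.M_1)M_2\ldots M_n$: either (4a) it begins by firing the head $\b$-redex, $M\ra_\b (M_1[x:=M_2]\ldots M_n)=N\fl^{\si_1}M'$ with $\si_1$ standard, in which case $|\si|=1+|\si_1|\leq 1+\eta(N)$ directly; or (4b) it is of the form $M\fl^{\si_1}(\l x.N_1)M_2\ldots M_n\fl^{\si_2}\cdots\fl^{\si_n}(\l x.N_1)N_2\ldots N_n=M'$ — but this last term contains the $\b$-redex $(\l x.N_1)N_2$, so it is not in normal form, contradiction. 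Hence only case (4a) occurs, giving $\eta(M)\leq 1+\eta(N)$, and combined with the forward inequality, $\eta(M)=\eta(N)+1$ in case 1.

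For case 2, where $x\notin Fv(M_1)$, the subtlety is that $N=(M_1\ora{P})$ no longer contains $M_2$, so firing the head redex \emph{discards} the reduction potential of $M_2$; the bound must therefore add back $\eta(M_2)$. Here I would argue that a standard reduction sequence from $M$ either fires the head redex first — then $M\ra_\b N\fl^{\si_1}M'$, and additionally $M_2$ could have been reduced along the way inside the former occurrence, but since $x\notin Fv(M_1)$ the residuals of $M_2$ vanish; so I instead pick the \emph{leftmost} reduction sequence, which (by Definition \ref{ch2:lmredex}, since $(\l x.M_1)M_2$ is the leftmost redex) fires the head $\b$-redex first, reaching $N$, and this sequence is of maximal length by Theorem \ref{ch2:lftmtheorem}; thus $\eta(M)=1+\eta(N)$. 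That would give the wrong answer, so the correct reading must be that in case 2 the term is \emph{not} a $\l\m$I-term and we argue directly: any reduction from $M$ decomposes, up to reordering which does not decrease length (a standardization-type exchange argument, or simply the fact that $M_2$ occurs disjointly from $\l x.M_1$), into reductions within $M_2$ — contributing at most $\eta(M_2)$ — and the reduction of the rest, which after firing the $\b$-redex is a reduction of $N$ contributing at most $\eta(N)$, plus the one $\b$-step; conversely, reducing $M_2$ fully to its normal form, then firing the $\b$-redex, then normalizing $N$, realizes $\eta(M_2)+1+\eta(N)$, so equality holds. I expect the main obstacle to be exactly this bookkeeping in case 2: making precise that the reductions inside the (disjoint) copy of $M_2$ can always be postponed without shortening the sequence, i.e. a small permutation/standardization argument showing $\eta$ is additive over the disjoint substitution $M_1[x:=M_2]$ when $x\notin Fv(M_1)$, for which Lemma \ref{ch2:cmp}(1) and the uniqueness/maximality of leftmost normalizing sequences (Theorems \ref{ch2:stlm} and \ref{ch2:lftmtheorem}) are the natural tools.
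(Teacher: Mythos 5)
Your forward inequalities and the overall shape of your case 2 are reasonable, but the argument you give for the key inequality $\eta(M)\leq\eta(N)+1$ in case 1 rests on a hypothesis the lemma does not have. You claim the assumptions make $M$ a $\l\m$I-term; they do not. The lemma only requires $x\in Fv(M_1)$ for the \emph{outermost} abstraction and says nothing about the binders occurring inside $M_1$, $M_2$ or $\ora{P}$, so $M$ is in general an arbitrary $\l\m$-term — and it has to be, since Lemma \ref{ch2:lhtr} applies this lemma to arbitrary $\l\m$-terms. Consequently the second part of Theorem \ref{ch2:stlm} (that $|\si|\leq|st(\si)|$) and Theorem \ref{ch2:lftmtheorem} (that the standard normalizing sequence is leftmost and of maximal length) are not available: both are stated, and are only true, for $\l\m$I-terms. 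Your own case 2 shows why this matters: for $(\l x.M_1)M_2$ with $x\notin Fv(M_1)$ the leftmost normalizing sequence has length $1+\eta(N)$ while $\eta(M)=1+\eta(N)+\eta(M_2)$, and the same failure of ``leftmost equals longest'' can occur inside $M_1$ even when $x\in Fv(M_1)$.

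The paper proves case 1 by exactly the kind of direct rearrangement you reserve for case 2: take an arbitrary $\si$ from $M$ in which the head redex is involved, write it as $\si_1\#[(\l x.M_1')M_2']\#\si_2$ with $\si_1=\nu_1\#\nu_2\#\nu_3$ acting on $M_1$, $M_2$ and $\ora{P}$ respectively, and build a sequence of length at least $|\si|$ that fires the $\b$-redex \emph{first}: the steps of $\nu_1$ are transported into $M_1[x:=M_2]$ by Lemma \ref{ch2:sbtrx} (length preserved), and the steps of $\nu_2$ by Lemma \ref{ch2:cmp} — here the hypothesis $x\in Fv(M_1)$ is what guarantees that at least one copy of $M_2$ survives the substitution, so the $\nu_2$-steps are not lost. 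That is where $x\in Fv(M_1)$ is actually used, not to make $M$ a $\l\m$I-term. Your case 2 outline (pull the $M_2$-reductions to the front, fire the $\b$-step, then normalize $N$; conversely realize $\eta(M_2)+1+\eta(N)$ explicitly) matches the paper, but the citations of Theorems \ref{ch2:stlm} and \ref{ch2:lftmtheorem} there are again inapplicable; the rearrangement is elementary precisely because $M_2$ is disjoint from the rest of the term and simply disappears under the $\b$-step.
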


\begin{proof}
\begin{enumerate}
\item Let $M\fl^{\si}U$ be an arbitrary reduction sequence, we are
going to show that $|\si |\leq \eta (N)+1$, from which
the result follows. We may suppose that $(\l x. M_1) M_2$ is
involved in $\si$. Then $\si$ is of the following form for some
$\si_1$ and $\si_2$,
$M=(\l x. M_1) M_2\ora{P}
\fl^{\si_1}M'=(\l x. M_1') M_2'\ora{P'} \ra
(M_1'[x:=M_2']\;\ora{P'})\fl^{\si_2}U$
where
$M_i\fl^{\nu_i}M_i'$ $(i\in \{1,2\})$,
$\ora{P}\fl^{\nu_3}\ora{P'}$ and $\si_1=\nu_1\# \nu_2\#
\nu_3$. Let $\si'$ denote the reduction sequence
$M=(\l x. M_1) M_2 \ora{P} \ra
N=(M_1[x:=M_2] \ora{P})\fl^{\si^*}U$, where
$\si^*=\nu_1'\# \nu_2'\# \nu_3\# \si_2$ and $\nu_1'$ is constructed from
$\nu_1$ by Lemma \ref{ch2:sbtrx} with $M_1\fl^{\nu_1}M_1'$ and $M_2$ and $\nu_2'$ is obtained by applying
Lemma \ref{ch2:cmp} to $M_1$ and $M_2\fl^{\nu_2} M_2'$.
Then $|\si |\leq \eta (N)+1$, which is the desired result.
\item Let $M\fl^{\si}U$ be an arbitrary reduction sequence, it is enough to show that $|\si |\leq \eta (N)+ \eta(M_2)+1$.
We may suppose that $(\l x. M_1) M_2$ is
involved in $\si$. Then $\si$ is of the form
$M=(\l x. M_1) M_2\ora{P}
\fl^{\si_1}M'=(\l x. M_1') M_2'\ora{P'} \ra_{\beta}
(M_1'\;\ora{P'})\fl^{\si_2}U$
where
$M_i\fl^{\nu_i}M_i'$ $(i\in \{1,2\})$,
$\ora{P}\fl^{\nu_3}\ora{P'}$ and $\si_1=\nu_1\# \nu_2\#
\nu_3$. $\si$ can obviously be rearranged as
$M\fl^{\nu_2}(\l x. M_1) M'_2\ora{P} \ra_{\beta}N=(M_1\; \ora{P})\fl^{\nu_1\#\nu_3}(M'_1\; \ora{P'})$,
which yields the result. \qedhere
\end{enumerate}
\end{proof}

\begin{lem} \label{ch2:erh}\label{ch2:eth}~
\begin{enumerate}
\item Let $M=[\a]\m \b. M_1$ and $N=M_1[\b :=\a ]$. If $N$ is strongly normalizable,
then $M$ is also strongly normalizable and $\eta (M)=\eta (N)+1$.
\item Let $M=\m \a.[\a]M_1$ be a $\th$-redex. If $M_1$ is
strongly normalizable, then $M$ is also strongly normalizable and
$\eta (M)=\eta (M_1)+1$.
\end{enumerate}
\end{lem}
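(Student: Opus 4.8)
The plan is to handle both parts in the style of Lemma~\ref{ch2:etr}, exploiting a common phenomenon: every redex of $M$ lying \emph{outside} the body $M_1$ reduces, up to $\alpha$-equivalence, to the same term as the displayed single step $M\ra N$. Since neither a $\rho$- nor a $\th$-step duplicates or erases anything other than the consumed prefix, the permutation argument will produce no growth factor, so one gets the equality $\eta(M)=\eta(N)+1$, not merely an inequality; strong normalizability of $M$ is then automatic from the bound $\eta(M)\leq\eta(N)+1<\infty$.

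\textbf{Part 1.} Put $N=M_1[\b:=\a]$. The bound $\eta(N)+1\leq\eta(M)$ is immediate by prefixing $M\ra_\rho N$ to a longest reduction of $N$. For the reverse bound I would take an arbitrary $M\fl^{\si}U$. Inspecting the four redex shapes, the only redexes of $M$ not contained in $M_1$ are the outer $\rho$-redex $[\a]\m\b.(\cdot)$ and, whenever the current descendant of $M_1$ has the form $[\b]P$ with $\b\notin Fv(P)$, the $\th$-redex $\m\b.(\cdot)$; reducing either of these turns $[\a]\m\b. M_1''$ into $M_1''[\b:=\a]$. If $\si$ performs no such outer step, it acts entirely inside $M_1$, so $M_1\fl^{\si}M_1''$ and $|\si|\leq\eta(M_1)=\eta(N)$ (renaming a free $\m$-variable is a reduction-graph isomorphism). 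Otherwise write $\si=\si_1\#[R]\#\si_2$ with $R$ the first outer step; then $\si_1$ is inside $M_1$, say $M_1\fl^{\si_1}M_1'$, the step $R$ produces $M_1'[\b:=\a]$, and $\si$ may be rearranged as $M\ra_\rho N=M_1[\b:=\a]\fl^{\si_1[\b:=\a]}M_1'[\b:=\a]\fl^{\si_2}U$, which factors through $N$ with a tail of length $|\si|-1$; hence $|\si|\leq\eta(N)+1$.

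\textbf{Part 2.} Put $N=M_1$; here $\eta(N)+1\leq\eta(M)$ follows from $M\ra_\th M_1$. Given $M\fl^{\si}U$, the only redexes of $M$ outside $M_1$ are the root $\th$-redex $\m\a.[\a](\cdot)$ (still a $\th$-redex under inner reductions, as these cannot make $\a$ free) and, whenever the current descendant of $M_1$ has the form $\m\b. Q$, the $\rho$-redex $[\a]\m\b. Q$ directly below the root. Both collapse onto the current descendant $M_1'$ of $M_1$ up to $\alpha$-equivalence: the $\th$-step yields $M_1'$ directly, and the $\rho$-step yields $\m\a. Q[\b:=\a]=_{\alpha}\m\b. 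Q=M_1'$ since $\a\notin Fv(Q)$. The rest is verbatim Part~1: either $\si$ stays inside $M_1$ and $|\si|\leq\eta(M_1)=\eta(N)$, or splitting at the first outer step gives $M_1\fl^{\si_1}M_1'$ and a rearrangement $M\ra_\th N=M_1\fl^{\si_1}M_1'\fl^{\si_2}U$ of length $|\si|$ through $N$, so $|\si|\leq\eta(N)+1$. In both parts this gives $M$ strongly normalizable with $\eta(M)=\eta(N)+1$.

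The delicate point I expect is purely the bookkeeping with the overlapping outer redexes: one must check that the list of ``outer'' redexes above is exhaustive (so that the prefix $\si_1$ before the first outer step genuinely remains inside $M_1$), and that in each sub-case the outer $\rho$- and $\th$-reductions really do land on the same term $M_1'[\b:=\a]$ (resp.\ $=_{\alpha}M_1'$), which is what makes the permuted sequence well-defined. This is the manifestation, for arbitrary reductions, of the overlapping $\rho/\th$-redexes that the definition of standardness was designed to control: here we cannot forbid either order, but the computation shows the two orders produce the same reduct.
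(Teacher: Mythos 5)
Your argument is correct and follows essentially the same route as the paper's proof: split $\si$ at the first step not internal to $M_1$, observe that the overlapping $\rho$- and $\th$-steps produce the same reduct ($M_1'[\b:=\a]$, resp.\ $M_1'$ up to $\alpha$-equivalence), and permute that step to the front so the sequence factors through $N$ with unchanged length. Your uniform treatment of ``the first outer step'' is, if anything, slightly tidier than the paper's three-way case split (outer step first / no outer step / $\th$-step destroying the $\rho$-redex), but it is the same idea.
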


\begin{proof}
\begin{enumerate}
\item Assume $\si$ is a reduction sequence starting from $[\a]\m \b.M_1$.
We prove $|\si| \leq \eta (N)+1$, from which the result
follows. Let $\si =[R]\# \si'$ for some $\si'$. We distinguish the
various cases according to the form of $\si$.
\begin{enumerate}
\item If $[\a]\m \b. M_1\ra^R_{\rho}M_1[\b :=\a ]\fl^{\si'}M_2$, where $\si =[R]\# \si'$,
then the result obviously follows.
\item If $[\a]\m \b. M_1)\ra^{R} M_2\fl^{\si'}M_3$, where $M_2\neq N$ and
$\m \b. M_1$ does not disappear in $\si$, then $M_3=[\a]\m \b. M_3'$ and $M_1\fl^{\si}M_3'$,
 which yields the result.
\item If $[\a]\m \b. M_1 \ra^{R} M_2\fl^{\si'}M_3$, where $M_2\neq N$ and
$\m \b. M_1$ disappears in $\si$. Then $[\a]\m
\b. M_1 \fl^{\si''} [\a]\m \b.[\b]M_k \ra_{\th} [\a]M_k  \fl^{\si'''}M_3$, where $\m \b. M_1$
does not disappear in $\si''$. We have $[\a]\m \b.M_1 \ra_{\rho}M_1[\b :=\a ]\fl^{\si''[\b :=\a ]}([\b]M_k)[\b :=\a
]= [\a]M_k \fl^{\si'''}M_3$, and the latter reduction sequence is
equal in length to $\si$. By this the result follows.
\end{enumerate}
The reverse direction is obvious.

\item Similar to the above one. \qedhere
\end{enumerate}
\end{proof}

\begin{lem} \label{ch2:lgi}\label{ch2:etm}
Let $M=(\mu \a. M_1)M_2\ora{P}$ and $N=(\mu \a. M_1[\a
:=_rM_2])\ora{P'}$.
\begin{enumerate}
\item If $\a \in Fv(M_1)$ and $N$ is strongly normalizable, then $M$ is also strongly normalizable $\eta (M)=\eta(N)+1$.
\item If $\a \notin Fv(M_1)$ and $N,M_2$ are strongly normalizable, then $M$ is also strongly normalizable and $\eta (M)=
\eta (N)+\eta (M_2)+1$.
\end{enumerate}
\end{lem}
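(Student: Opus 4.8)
The plan is to prove each part via two inequalities, in the style of Lemmas \ref{ch2:etr} and \ref{ch2:erh}. The easy direction is the lower bound. For part (1), contracting the head $\mu$-redex $(\mu\a.M_1)M_2$ of $M$ in one step reaches $N$, so $\eta(M)\geq\eta(N)+1$. For part (2), since $\a\notin Fv(M_1)$ we have $M_1[\a:=_rM_2]=M_1$, hence $N=(\mu\a.M_1)\ora{P}$, and the head $\mu$-step merely discards the argument $M_2$; so first reducing $M_2$ to its normal form ($\eta(M_2)$ steps), then contracting the head $\mu$-redex (one step, landing exactly at $N$), then running a maximal reduction of $N$ shows $\eta(M)\geq\eta(N)+\eta(M_2)+1$. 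Once the matching upper bounds are in hand, these reductions together with strong normalizability of $N$ (and of $M_2$ in part (2)) give strong normalizability of $M$.

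For the upper bound I would take an arbitrary reduction $M\fl^{\si}U$ and bound $|\si|$. If the head $\mu$-redex of $M$ is never contracted in $\si$, then every step of $\si$ occurs in one of the pairwise disjoint subterms $M_1$, $M_2$, or the components of $\ora{P}$, so after reordering commuting steps $\si$ rearranges to $\nu_1\#\nu_2\#\nu_3$ with $M_i\fl^{\nu_i}M_i'$ and $\ora{P}\fl^{\nu_3}\ora{P'}$, and this case reduces to the main one. Otherwise write $\si=\si_1\#[R]\#\si_2$, where $R$ is the residual head $\mu$-redex $(\mu\a.M_1')M_2'$, $\si_1$ rearranges to $\nu_1\#\nu_2\#\nu_3$ as above, and $\si_2$ is a reduction of $(\mu\a.M_1'[\a:=_rM_2'])\ora{P'}$, so that $|\si|=|\nu_1|+|\nu_2|+|\nu_3|+1+|\si_2|$.

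For part (1) I would rebuild a reduction out of $N=(\mu\a.M_1[\a:=_rM_2])\ora{P}$. Because $\a\in Fv(M_1)$, this term contains $|M_1|_\a\geq 1$ disjoint copies of $M_2$, so replaying $\nu_2$ on each of them uses at least $|\nu_2|$ steps and reaches $(\mu\a.M_1[\a:=_rM_2'])\ora{P}$; then replaying $\nu_1$ inside (the substitution $[\a:=_rM_2']$ not shortening it, each $\nu_1$-step inducing at least one step, as in Lemma \ref{ch2:sbtrm}) reaches $(\mu\a.M_1'[\a:=_rM_2'])\ora{P}$; then $\nu_3$, then $\si_2$, finish at $U$. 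This reduction of $N$ has length at least $|\nu_1|+|\nu_2|+|\nu_3|+|\si_2|=|\si|-1$, whence $|\si|\leq\eta(N)+1$. The genuinely delicate point is exactly here: if $\a$ disappears from $M_1$ during $\nu_1$, so that $|M_1'|_\a=0$, then appealing to Lemma \ref{ch2:comput} in its stated order — replaying $\nu_1$ first and $\nu_2$ only on the surviving copies — would drop the $|\nu_2|$ contribution. Contracting the $M_2$-copies of $N$ \emph{before} replaying $\nu_1$, and using $|M_1|_\a$ rather than $|M_1'|_\a$, is what makes the count go through; I expect this to be the main obstacle.

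For part (2) there is nothing to substitute: $\a\notin Fv(M_1)$ gives $N=(\mu\a.M_1)\ora{P}$, and $\a\notin Fv(M_1')$ as well, so the head $\mu$-step in $\si$ discards $M_2'$ and $\si_2$ is already a reduction of $(\mu\a.M_1')\ora{P'}$. Hence from $N$ I can replay $\nu_1$ directly, then $\nu_3$, then $\si_2$, a reduction of $N$ of length $|\nu_1|+|\nu_3|+|\si_2|$, so $|\nu_1|+|\nu_3|+|\si_2|\leq\eta(N)$; combined with $|\nu_2|\leq\eta(M_2)$ this gives $|\si|\leq\eta(N)+\eta(M_2)+1$. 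Together with the lower bounds of the first paragraph this yields the two equalities, and the uniform bound on $|\si|$ yields strong normalizability of $M$. Everything except the $\mu$-substitution bookkeeping of part (1) runs parallel to the $\b$-redex case of Lemma \ref{ch2:etr}.
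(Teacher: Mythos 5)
Your overall strategy is the paper's: prove each equality as a pair of inequalities, obtain the lower bound from the one-step head $\mu$-reduction (plus $\eta(M_2)$ in part 2), and obtain the upper bound by rerouting an arbitrary $\si$ through $N$, replaying the $M_2$-part on the copies created by $[\a:=_rM_2]$ \emph{before} replaying the $M_1$-part under the substitution, so that the count uses $|M_1|_\a\geq 1$ together with Lemmas \ref{ch2:cmp} and \ref{ch2:sbtrm}. You correctly identify that ordering as the delicate point, and you resolve it exactly as the paper does.

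There is, however, a genuine gap in your case analysis for part 1. You split $\si$ into ``the head $\mu$-redex is contracted'' versus ``it is never contracted, hence every step lives in one of the pairwise disjoint subterms $M_1$, $M_2$, $\ora{P}$''. This dichotomy misses a third possibility. The lemma must hold for arbitrary $\lm$-terms (it is applied to the original term $M$ in Lemma \ref{ch2:lhtr}), so reduction may erase occurrences of $\a$ even though $\a\in Fv(M_1)$; hence $\m\a. M_1$ can reduce to a $\th$-redex $\m\a.[\a]M_1''$ with $\a\notin Fv(M_1'')$, which is then consumed by the $\th$-rule. That $\th$-step is not a reduction inside $M_1$ (it rewrites the whole abstraction, not its body), and after it the term becomes $(M_1''\;M_2'\ora{P''})$, where $(M_1''\;M_2')$ may be a brand-new $\b$- or $\m$-redex; the remaining steps of $\si$ are then no longer confined to the originally disjoint subterms, so neither of your two cases applies and the rearrangement into $\nu_1\#\nu_2\#\nu_3$ breaks down. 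The paper devotes two of its four subcases to precisely this situation (``$\m\a. M_1$ disappears in $\si$''): starting from $N$ one replays $\nu_2$ on the copies, then $\nu_1$ under $[\a:=_rM_2']$, landing on $\m\a.\bigl(([\a]M_1'')[\a:=_rM_2']\bigr)=\m\a.[\a](M_1''\;M_2')$, which is again a $\th$-redex contracting to the very same term $(M_1''\;M_2'\ora{P'})$, after which the tail of $\si$ is replayed verbatim; the length count then goes through as in your main case. Part 2 is immune to this, since $\a\notin Fv(M_1)$ guarantees that no residual of $\m\a. M_1$ can ever become a $\th$-redex, as the paper notes. Adding this ``disappearance'' subcase (in both the involved and non-involved variants) would complete your argument.
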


\begin{proof}
\begin{enumerate}
\item Let $M\ra^{\si}M^*$. We prove $|\si| \leq \eta (N)+1$, from this
$\eta (M)\leq \eta (N)+1$ follows.
\begin{enumerate}
\item The redex $R=(\m \a. M_1)M_2$ is involved in $\si$.
\begin{enumerate}
\item If $\m \a. M_1$ does not disappear in $\si$,
$(\mu \a. M_1)M_2\ora{P}\ra^{\si'}(\mu \a. M_1')M_2'\ora{P'}
\ra_{\m}(\mu \a. M_1'[\a:=_rM_2'])\ora{P'}\fl^{\si''}M^*$. Then,
since $\a \in Fv(M_1)$, by Lemmas \ref{ch2:cmp} and
\ref{ch2:sbtrm}, the reduction sequence $(\mu
\a. M_1)M_2\ora{P} \ra^r(\mu \a. M_1[\a
:=_rM_2])\ora{P} \fl (\mu \a. M_1[\a :=_rM_2'])\ora{P} \fl (\mu \a.
M_1'[\a:=_rM_2'])\ora{P'} \fl^{\si''}M^*$ has length at least
$|\si |$, by which the assertion follows.
\item If $\m \a. M_1$ disappears in $\si$,
$(\mu \a. M_1)M_2\ora{P} \ra^{\si'}(\mu \a.
[\a]M_1')M_2'\ora{P'}\ra_{\th}$\\
$(M_1'\;M_2'\ora{P'})\fl^{\si''}M^*$. Then, since $\a
\in Fv(M_1)$, by Lemmas \ref{ch2:cmp} and \ref{ch2:sbtrm}, the
sequence $(\mu \a. M_1)M_2\ora{P} \ra_{\m}(\mu
\a. M_1[\a :=_rM_2])\ora{P} \fl (\m \a.([\a]M_1')[\a :=_rM_2'])\ora{P'}
=(\m \a.[\a](M_1'\;M_2'))\ora{P'})
\ra_{\th}(M_1'\;M_2'\ora{P'})\fl^{\si''}
M^*$ has length at least $|\si|$, which yields the result.
\end{enumerate}
\item The redex $R=(\m \a. M_1)M_2$ is not involved in $\si$.
\begin{enumerate}
\item If $\m \a. M_1$ does not disappear in $\si$, that is, $(\mu \a. M_1)M_2\ora{P} \fl M^*=$\\
$(\m \a. M_1')M_2'\ora{P'}$. Then, since $\a \in
Fv(M_1)$, we can apply Lemmas \ref{ch2:cmp} and \ref{ch2:sbtrm} to
assert that $(\mu \a. M_1)M_2\ora{P} \ra^R(\mu \a. M_1[\a
:=_rM_2])\ora{P} \fl (\mu \a. M_1'[\a :=_rM_2'])\ora{P}'$ has
length at least $|\si |+1$.
\item If $\m \a. M_1$ disappears in $\si$,
$(\mu \a. M_1) M_2\ora{P} \fl (\m \a.[\a]M_1')M_2'\ora{P'} \ra_{\th}$\\
$ (M_1'\;M_2'\ora{P'})\fl
M^*$. By Lemmas \ref{ch2:sbtrm} and \ref{ch2:cmp}, the sequence\\
$(\mu \a. M_1)M_2\ora{P} \ra^R (\mu \a. M_1[\a
:=_rM_2])\ora{P} \fl (\m
\a. ([\a]M_1')[\a :=_rM_2'])\ora{P'} =(\m
\a.[\a](M_1'\;M_2'))\ora{P'} \ra_{\th}(M_1'\;M_2' \; \ora{P'})\fl
M^*$ has length at least $|\si|+1$, which proves the assertion.
\end{enumerate}
\end{enumerate}
The reverse direction is obvious.

\item The proof of $\eta (M)\leq \eta (N)+\eta (M_2)+1$ is similar to
the first part of the proof of Lemma \ref{ch2:lgi}. In this case
the verification is made easier by the fact that, since $\a \notin
Fv(M_1)$, $\m \a. M_1$ does not disappear
in a reduction sequence starting from $M$.
For the converse, let $N\fl^{\si}N'$ and $M_2\fl^{\nu}M_2'$.
Then $(\m \a. M_1) M_2\ora{P} \ra^{\nu}(\m \a.
M_1)M_2'\ora{P} \ra_{\m}(\m \a. M_1)\ora{P} \ra^{\si}N'$ is a
reduction sequence starting from $M$, which means that $\eta
(N)+\eta (M_2)+1\leq \eta(M)$. \qedhere
\end{enumerate}
\end{proof}

\subsection{The general case}

In what follows we transform every $\l \m$-term $M$ into a $\l
\m I$-term $[\![M]\!]_k$ with some $k\geq 0$ such that $\eta (M)\leq \eta ([\![M]\!]_k)$,
by which, using Corollary \ref{ch2:bfi}, we can obtain a bound for
$\eta (M)$.

At this point our presentation slightly differs from that of Xi \cite{Xi}. We have reformulated the translation in \cite{Xi},
hence we were able to avoid the minor mistake of Xi when computing the complexity of the obtained $\l\m$I-terms.
For a detailed explanation see \cite{Batt}.
The interesting fact for Theorem \ref{ch2:bflmr}, which is the main result of the paper, is, however,
that we get the same bound for the simply typed $\l \m$-calculus as Xi obtained for the $\l$-calculus,
mutatis mutandis. Namely, if we restrict the notion of the rank of a term in Definition \ref{ch1:lhtype}
by taking into consideration the $\b$-redex only, we get the result of Xi for the $\l$-calculus as
a special case of Theorem \ref{ch2:bflmr}. This suggests that the addition of the classical variables,
together with the new rules, does not increase the computational complexity of the calculus.
The idea of the translation is to introduce new variables of appropriate types in order to ensure
that each bounded variable appears in the terms. The only difficult case is that of the $\l$-abstraction.
We explain below the difficulties lying behind the definition for the case of the abstraction.

\begin{defi}\label{ch2:tr_lm_lmi}
\hfill
\begin{enumerate}
\item Let $\ma{V}=\{v_{(A,B)}\;|\;A, B\textrm{ are types}\}$ be a set
of distinguished variables such that for all $A$, $B$ we have
$v_{(A,B)}:A\ra(B\ra A)$, where $v_{(A,B)}$ are either constants or new variables.
Let $M:A$ and $N:B$ be typed $\l \m$-terms. We denote the term $((v_{(A,B)}\;M)\;N)$ by $\lan M ,
N\ran$.
\item Let $M$ be a term and $k\geq 0$. The $\l \m$-term $[\![M]\!]_k$ assigned to $M$ is defined as
follows.
\begin{itemize}
\item $[\![M]\!]_k=M$, if $M$ is a variable,
\item $[\![M]\!]_k=\l x.\l y_1.\dots \l y_m.\langle ([\![M_1]\!]_k\;y_1\dots
y_m) , x\rangle$, if $M=\l x.M_1$ such that $lh(type(M))\leq k$ and
$type(M_1)=A_1\ra \dots \ra A_m\ra B$, $type(y_i)=A_i$ $(1\leq
i\leq m)$ and $B$ is atomic,
\item $[\![M]\!]_k=\l x.\langle [\![M_1]\!]_k , x\rangle$, if $M=\l x.M_1$ and
$lh(type(M))> k$,
\item $[\![M]\!]_k=\m \a.\langle [\![M_1]\!]_k , [\a] z\rangle$, if $M=\m \a. M_1$,
where $\a \notin Fv(M_1)$ and $z$ is a new variable such that
$type(M)=type(z)$,
\item $[\![M]\!]_k=\mu \a. [\![M_1]\!]_k$, if $M=\m \a. M_1$ and $\a \in
Fv(M_1)$,
\item $[\![M]\!]_k=[\a][\![M_1]\!]_k$, if $M=[\a]M_1$,
\item $[\![M]\!]_k=([\![M_1]\!]_k\;[\![M_2]\!]_k)$, if $M=(M_1\;M_2)$.
\end{itemize}
\item For each term $M$ and each $k \geq 0$, we define the contexte $\G_{M,k}$ wich containes
the constants $v_{(A,B)}$ of $[\![M]\!]_k$ with their type $A\ra(B\ra A)$.
\end{enumerate}
\end{defi}

Observe that in the definition above the translations for $\l$- and $\m$-abstractions differ.
The underlying reason is the fact that in a $\b$-reduction the $\l$-abstraction disappears while
this is not the case concerning a $\m$-reduction. Hence, in order to ensure the validity of Lemma \ref{ch2:lhtr},
we must make sure that the translation of a term with $\b$-redex as head redex can be continued even after
the reduction with the head redex. The main aim with the translation is to produce a $\lm$I-term $[\![M]\!]_k$ from
$M$ such that the relation $\eta(M)\leq \eta([\![M]\!]_k)$ should be valid, which is the statement of Lemma \ref{ch2:lhtr}.
To achieve this, we reproduce the original $M$ inside its translation $[\![M]\!]_k$ in a sense, since, in general,
the translation does not respect reduction, that is, if $M\ra N$, then it is not necessarily the case that $[\![M]\!]_k\ra [\![N]\!]_k$.\\

The next four lemmas describe some intuitively clear properties of the translation.

\begin{lem} \label{ch2:lIvar}
 Let $M$ be a term and $k\geq 0$.
\begin{enumerate}
\item $[\![M]\!]_k$ is a $\l \m I$-term.
\item $\a \in Fv(M)$ iff $\a \in Fv([\![M]\!]_k)$ and
if $x \in Fv(M)$, then $x \in Fv([\![M]\!]_k)$.
\end{enumerate}
\end{lem}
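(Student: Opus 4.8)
The plan is to prove items 1 and 2 simultaneously by induction on $comp(M)$, following the seven clauses of Definition \ref{ch2:tr_lm_lmi}. Proving them together is necessary: in the clause $M=\m\a.M_1$ with $\a\in Fv(M_1)$ we have $[\![M]\!]_k=\m\a.[\![M_1]\!]_k$, and to see that this is a $\lm$I-term we need $\a\in Fv([\![M_1]\!]_k)$, which is exactly item 2 of the inductive hypothesis. Throughout I will use the convention that the auxiliary variables $z$ and $y_1,\dots,y_m$ created by the translation are chosen fresh — in particular distinct from every free variable of $M$ — and that the distinguished variables $v_{(A,B)}$ count as ($\l$-)variables; unfolding $\lan P,Q\ran$ as $((v_{(A,B)}\;P)\;Q)$, a term of that shape is a $\lm$I-term whenever $P$ and $Q$ are, since applications of $\lm$I-terms are $\lm$I-terms.

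For item 1 the base case $M=x$ is immediate, and the clauses $M=(M_1\;M_2)$, $M=[\a]M_1$, and $M=\m\a.M_1$ with $\a\in Fv(M_1)$ follow at once from the inductive hypothesis and the formation rules of the $\lm$I-calculus (the last using item 2, as noted). For $M=\m\a.M_1$ with $\a\notin Fv(M_1)$ we have $[\![M]\!]_k=\m\a.\lan[\![M_1]\!]_k,[\a]z\ran$; here $[\a]z$ is a $\lm$I-term, hence so is the body, and $\a$ occurs in it (inside $[\a]z$), so the $\m$-abstraction is legitimate. For the two $\l$-abstraction clauses the body of $[\![M]\!]_k$ is obtained by applying $[\![M_1]\!]_k$ (a $\lm$I-term by induction) to the fresh variables $y_1,\dots,y_m$ and pairing the result with $x$; reading the $\l$-prefixes from the inside out, each $\l y_i$ is legitimate because $y_i$ occurs in $([\![M_1]\!]_k\;y_1\dots y_m)$ and $\l x$ is legitimate because $x$ occurs in the outermost pair (the case $lh(type(M))>k$ is the same with $m=0$).

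For item 2 one computes $Fv([\![M]\!]_k)$ in terms of $Fv([\![M_1]\!]_k)$ (and $Fv([\![M_2]\!]_k)$) clause by clause and compares with the corresponding computation of $Fv(M)$, invoking the inductive hypothesis. Since the auxiliary variables $z$, $y_1,\dots,y_m$, $v_{(A,B)}$ are fresh $\l$-variables, passing to the translation can only add the $v_{(A,B)}$ to the set of free $\l$-variables and bind the $y_i$ and $x$, while the set of free $\m$-variables is left unchanged apart from the binding of $\a$ in the $\m$-abstraction clauses; together with the hypothesis this yields the equivalence $\a\in Fv(M)\Leftrightarrow\a\in Fv([\![M]\!]_k)$ for $\m$-variables and the implication $x\in Fv(M)\Rightarrow x\in Fv([\![M]\!]_k)$ for $\l$-variables (only an implication, because the translation may genuinely introduce new free $\l$-variables, namely the $v_{(A,B)}$).

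I do not expect a genuinely difficult step. The only points requiring care are the free-variable bookkeeping in the $\l$-abstraction clause with $lh(type(M))\le k$, where the body carries the extra binders $\l y_1\dots\l y_m$ and the pairing variable, and the fact that items 1 and 2 must be carried through the induction together so that the $\m$-abstraction clause of item 1 can appeal to item 2.
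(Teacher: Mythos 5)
Your proof is correct and follows exactly the paper's approach: the paper proves this lemma by a (stated but not detailed) induction on $comp(M)$, and your case analysis over the clauses of Definition \ref{ch2:tr_lm_lmi}, with items 1 and 2 carried simultaneously so that the $\m$-abstraction clause can use item 2, is precisely the intended argument.
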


\begin{proof}
By induction on $comp(M)$.
\end{proof}

Observe that, in the case of $\l$-variables, $Fv(M)\subseteq Fv([\![M]\!]_k)$, since $M$ can contain free variables of
the form $v_{(A,B)}$ besides its original parameters.

\begin{lem} \label{ch2:rankeq}
If $M$ is a term and $k \geq 0$, then $rank([\![M]\!]_k)=rank(M)$.
\end{lem}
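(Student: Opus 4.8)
The plan is to prove the equality by induction on $comp(M)$, after first observing that the translation preserves types, i.e. $type([\![M]\!]_k)=type(M)$ for every term $M$ and every $k$; this is an immediate induction using $type(\langle P,Q\rangle)=type(P)$ together with the inductive hypothesis on the immediate subterms of $M$. Granting this, it is enough to prove the two inequalities $rank(M)\leq rank([\![M]\!]_k)$ and $rank([\![M]\!]_k)\leq rank(M)$ separately.

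For the first inequality I would check that every redex $R\leq M$ yields a redex $R'$ of $[\![M]\!]_k$ of the same kind with $rank(R',[\![M]\!]_k)=rank(R,M)$. Since the translation is compositional it suffices to look at a redex sitting at the root of a subterm: a $\b$-redex $(\l x.P)Q$ is sent to $([\![\l x.P]\!]_k\,[\![Q]\!]_k)$, and in either clause for $\l$-abstraction $[\![\l x.P]\!]_k$ is again a $\l$-abstraction of type $type(\l x.P)$; a $\m$-redex $(\m\a.P)Q$ is sent to $([\![\m\a.P]\!]_k\,[\![Q]\!]_k)$, where in either clause $[\![\m\a.P]\!]_k$ is a $\m$-abstraction of type $type(\m\a.P)$; a $\r$-redex $[\a]\m\b.P$ is sent to $[\a][\![\m\b.P]\!]_k$, again a $\r$-redex; and a $\th$-redex $\m\a.[\a]P$ with $\a\notin Fv(P)$ is sent, via the clause $[\![\m\a.M_1]\!]_k=\m\a.[\![M_1]\!]_k$ (applicable since $\a\in Fv([\a]P)$), to $\m\a.[\a][\![P]\!]_k$, which is again a $\th$-redex because $\a\notin Fv([\![P]\!]_k)$ by Lemma \ref{ch2:lIvar}. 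In each case the rank is computed from the type of the abstraction involved, which is preserved, so $rank(M)\leq rank([\![M]\!]_k)$.

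For the converse I would classify the redexes of $[\![M]\!]_k$. By the shape of the clauses of Definition \ref{ch2:tr_lm_lmi}, any redex of $[\![M]\!]_k$ either lies entirely inside some occurrence $[\![M_i]\!]_k$ with $M_i$ an immediate subterm of $M$, in which case the induction hypothesis gives rank $\leq rank(M_i)\leq rank(M)$ (every redex of $M_i$ being a redex of $M$); or it involves the scaffolding introduced by the clause. The pairing subterms $\langle P,Q\rangle=((v_{(A,B)}\,P)\,Q)$ are headed by the distinguished variable $v_{(A,B)}$, hence are never redexes and create no redex with their surroundings --- in particular $\m\a.\langle P,[\a]z\rangle$ is not a $\th$-redex, its body not being of the form $[\a](\cdot)$, and $[\a]z$ is not a $\r$-redex, $z$ being a variable; likewise the terms $([\![M_1]\!]_k\,[\![M_2]\!]_k)$, $[\a][\![M_1]\!]_k$ and $\m\a.[\![M_1]\!]_k$ produced by the remaining clauses are redexes only when the subterm of $M$ they translate already was one, and then with the same rank, which is $\leq rank(M)$. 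The single source of a genuinely new redex is the first $\l$-clause $[\![\l x.M_1]\!]_k=\l x.\l y_1.\dots \l y_m.\langle([\![M_1]\!]_k\,y_1\dots y_m),x\rangle$, which is used precisely when $\l x.M_1$ occurs as the left-hand side of a $\b$-redex of $M$: there, if $[\![M_1]\!]_k$ begins with a $\l$- or a $\m$-abstraction (equivalently, if $M_1$ does), then $([\![M_1]\!]_k\,y_1\dots y_m)$ contains exactly one new redex, the outermost one, applying $[\![M_1]\!]_k$ to $y_1$; its abstraction is $[\![M_1]\!]_k$, of type $type(M_1)$, so its rank is $lh(type(M_1))=lh(type(\l x.M_1))-1$. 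Since $\l x.M_1$ heads a $\b$-redex of $M$ we have $rank(M)\geq lh(type(\l x.M_1))>lh(type(M_1))$, so this new redex does not raise the rank, and $rank([\![M]\!]_k)\leq rank(M)$ follows, whence equality.

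I expect the bookkeeping for the first $\l$-clause to be the only real point of the argument: one must check that the $\eta$-expansion pattern $\l y_1.\dots \l y_m.\langle(-\,y_1\dots y_m),x\rangle$ produces nothing beyond the single redex consuming $y_1$, of rank $lh(type(M_1))$, and that this is strictly below the rank of the $\b$-redex headed by $\l x.M_1$ --- which is exactly the reason for the case distinction, in the definition of the translation, on whether a $\l$-abstraction is the left-hand side of a $\b$-redex. The remaining subtlety is just the routine observation that the combinators $v_{(A,B)}$ are (fresh) variables rather than abstractions, so the pairing construction never conspires with the ambient $\m$-, $\r$- or $\th$-structure to create an extra redex.
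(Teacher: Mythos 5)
Your overall strategy --- induction on $comp(M)$, preservation of types under the translation, and a case analysis showing that the scaffolding introduced by Definition \ref{ch2:tr_lm_lmi} creates no redex of rank exceeding $rank(M)$ --- is the right one (the paper's own proof is just ``by induction on $comp(M)$''), and your treatment of the pairing combinators, of the $\m$-, $\rho$- and $\th$-clauses, and of the inequality $rank(M)\leq rank([\![M]\!]_k)$ is correct. The gap is in the one case you yourself single out as the crux. You assert that the first $\l$-clause is applied ``precisely when $\l x.M_1$ occurs as the left-hand side of a $\b$-redex of $M$'', and you then use the resulting inequality $rank(M)\geq lh(type(\l x.M_1))$ to conclude that the new redex $([\![M_1]\!]_k\,y_1)$, of rank $lh(type(M_1))=lh(type(\l x.M_1))-1$, does not raise the rank. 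But Definition \ref{ch2:tr_lm_lmi} does not say that: the first clause is selected whenever $lh(type(\l x.M_1))\leq k$, whether or not $\l x.M_1$ heads a redex (the remark about ``left-hand side of a $\b$-redex'' in the surrounding prose is only motivation for the case split). What the clause actually gives you is $lh(type(\l x.M_1))\leq k$, hence the new redex has rank at most $k-1$; this is bounded by $rank(M)$ only under the extra hypothesis $k\leq rank(M)$, which is not among the hypotheses of the lemma.

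This is not a mere presentational slip, because for $k>rank(M)$ the statement itself fails. Take $M=\l x.\l y.y$ with $x:A$ and $y:B$ atomic, so that $M$ is normal and $rank(M)=0$; with $k=2$ the first $\l$-clause applies to both abstractions and $[\![M]\!]_2=\l x.\l y_1.\langle(\l y.\langle y,y\rangle)\,y_1,\,x\rangle$, which contains the $\b$-redex $(\l y.\langle y,y\rangle)\,y_1$ of rank $1$, so $rank([\![M]\!]_2)=1\neq rank(M)$. The statement your argument actually proves, once the case condition is read off the formal definition rather than the motivating prose, is $rank(M)\leq rank([\![M]\!]_k)\leq\max(rank(M),k-1)$, hence equality whenever $k\leq rank(M)$; this suffices for the paper, which only ever invokes the lemma with $k=rank(M)$. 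So keep your proof, but replace the justification ``$\l x.M_1$ heads a $\b$-redex of $M$'' by ``$lh(type(\l x.M_1))\leq k\leq rank(M)$'', and record the side condition $k\leq rank(M)$ in the statement.
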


\begin{proof}
By induction on $comp(M)$.
\end{proof}

\begin{lem}\label{ch2:sizetr}
If $M$ be a term and $k \geq 0$, then $comp([\![M]\!]_k)\leq (2k+3)\cdot comp(M)$.
\end{lem}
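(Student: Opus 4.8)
The plan is a structural induction on $comp(M)$, running through the seven clauses of Definition~\ref{ch2:tr_lm_lmi} and, in each one, rewriting the explicit shape of $[\![M]\!]_k$ as a symbol count into which the induction hypothesis is fed. The base case $M\in\ma V$ is immediate, since then $[\![M]\!]_k=M$ and $comp([\![M]\!]_k)=1\le 2k+3$.

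I would then dispose of the clauses in which $[\![M]\!]_k$ is assembled from the translations of the immediate subterms with only a bounded amount of extra material. For $M=(M_1\,M_2)$ the estimate is exact: $comp([\![M]\!]_k)=comp([\![M_1]\!]_k)+comp([\![M_2]\!]_k)\le(2k+3)(comp(M_1)+comp(M_2))=(2k+3)comp(M)$. For $M=[\a]M_1$, for $M=\m\a.M_1$ with $\a\in Fv(M_1)$, and for $M=\m\a.M_1$ with $\a\notin Fv(M_1)$, the term $[\![M]\!]_k$ is $[\![M_1]\!]_k$ with one prefixed constructor plus, in the last case, the wrapper $\langle\cdot,[\a]z\rangle$; in every case the number of symbols added to $[\![M_1]\!]_k$ is a fixed constant, and one checks that this additive overhead is absorbed by the factor $2k+3$ when the induction hypothesis is applied (using $comp(M_1)\ge 1$). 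The clause $M=\l x.M_1$ with $lh(type(M))>k$ is of the same flavour, since $[\![M]\!]_k=\l x.\langle[\![M_1]\!]_k,x\rangle$ only adds the prefix $\l x$ and the pairing.

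The genuinely load-bearing case — and the only one that actually spends the factor $2k+3$ — is $M=\l x.M_1$ with $lh(type(M))\le k$. Writing $type(M_1)=A_1\ra\cdots\ra A_m\ra B$ with $B$ atomic, one has $[\![M]\!]_k=\l x.\l y_1.\cdots\l y_m.\langle([\![M_1]\!]_k\,y_1\cdots y_m),x\rangle$, so $comp([\![M]\!]_k)=comp([\![M_1]\!]_k)+2m+c$ for a small constant $c$. The crucial observation is that $m\le lh(type(M_1))=lh(type(M))-1\le k-1$; hence $2m+c$ is bounded by roughly $2k$, and feeding in $comp([\![M_1]\!]_k)\le(2k+3)comp(M_1)$ gives $comp([\![M]\!]_k)<(2k+3)(comp(M_1)+1)=(2k+3)comp(M)$. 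I expect this to be the main obstacle: one has to arrange the auxiliary abstractions $y_1,\dots,y_m$ so that their count is governed by the \emph{type} of $M$ and not by its size — this is exactly why the $\l$-abstraction clause of Definition~\ref{ch2:tr_lm_lmi} must be split according to whether $lh(type(M))\le k$, and it is the point at which Xi's original estimate was imprecise.
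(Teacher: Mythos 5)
Your proof is correct and follows the paper's argument exactly: induction on $comp(M)$, with the $\l$-abstraction clause for $lh(type(M))\le k$ singled out as the only case that genuinely spends the factor $2k+3$, via the bound $m\le lh(type(M))\le k$ giving $comp([\![M_1]\!]_k)+2m+3\le(2k+3)(comp(M_1)+1)=(2k+3)\cdot comp(M)$. One caveat you share with the paper (which dismisses all other clauses as trivial): in the clause $\m\a. M_1$ with $\a\notin Fv(M_1)$ the additive overhead is $4$, and the claim that it is ``absorbed'' by the extra $2k+3$ coming from $comp(M)=comp(M_1)+1$ only works for $k\ge 1$, so the $k=0$ case of that clause needs separate attention.
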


\begin{proof}
The only nontrivial case is $M=\l x.M_1$. Let $lh(type(\l
xM_1))=l$. If  $k<l$, then $comp([\![M]\!]_k)=comp(\l x.\lan [\![M_1]\!]_k , x\ran )=comp([\![M_1]\!]_k)+3
\leq (2k+3)\cdot comp(M)$.
If $k\geq l$, then, for some $m\leq l$, we obtain by the induction hypothesis
$comp([\![M]\!]_k) = comp(\l x.\l y_1.\dots \l y_m.\lan ([\![M_1]\!]_k\;y_1\dots y_m) , x\ran ) = comp([\![M_1]\!]_k)+2m+3\leq
(2k+3)\cdot comp(M)$.
\end{proof}

\begin{lem} \label{ch2:sbth}
Let $M,N$ be terms and $k\geq 0$.
\begin{enumerate}
\item If $\G \v M : A$, then $\G , \G_{M,k} \v [\![M]\!]_k : A$.
\item $[\![M]\!]_k[x:=[\![N]\!]_k]=[\![M[x:=N]]\!]_k$,
\item $[\![M]\!]_k[\a :=_r[\![N]\!]_k]=[\![M[\a :=_rN]]\!]_k$,
\item $[\![M]\!]_k[\b :=\a ]=[\![M[\b :=\a ]]\!]_k$.
\end{enumerate}
\end{lem}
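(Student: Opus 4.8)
The plan is to establish all four items by induction on $comp(M)$, following the clause analysis of Definition \ref{ch2:tr_lm_lmi}. Items (2)--(4) are purely syntactic identities and (1) does not depend on them, so I would treat (1) first, then (2)--(4) in one uniform sweep.

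For item (1): the clauses where $M$ is a variable, or $M=\m\a. M_1$ with $\a\in Fv(M_1)$, or $M=[\a]M_1$, or $M=(M_1\;M_2)$, are immediate from the induction hypothesis, with $\G_{M,k}$ collecting the declarations of all auxiliary constants $v_{(A,B)}$ of the sub-translations (together with the fresh variables $z$, which Definition \ref{ch2:tr_lm_lmi} does not list explicitly but which must be carried in $\G_{M,k}$ as well). For $M=\m\a. M_1$ with $\a\notin Fv(M_1)$: here $M:A$ forces $\a:\neg A$ and $M_1:\bot$, so $[\![M_1]\!]_k:\bot$, and $[\a]z:\bot$ since $type(z)=A$, whence $\langle[\![M_1]\!]_k,[\a]z\rangle=((v_{(\bot,\bot)}\,[\![M_1]\!]_k)\,[\a]z):\bot$ and $\m\a.\langle\ldots\rangle:A$. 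For $M=\l x.M_1$ with $lh(type(M))>k$, the pair $\langle[\![M_1]\!]_k,x\rangle$ has type $type(M_1)$ and $\l x.\langle\ldots\rangle$ has type $type(M)$. The one genuinely delicate clause is $M=\l x.M_1$ with $lh(type(M))\leq k$: writing $type(x)=C$ and $type(M_1)=A_1\ra\cdots\ra A_m\ra B$ with $B$ atomic, one checks $([\![M_1]\!]_k\,y_1\cdots y_m):B$ under $y_i:A_i$, so $\langle([\![M_1]\!]_k\,y_1\cdots y_m),x\rangle=((v_{(B,C)}\,([\![M_1]\!]_k\,y_1\cdots y_m))\,x):B$; then $\l y_1.\cdots\l y_m.\langle\ldots\rangle$ re-curries to $type(M_1)$ and $\l x.\l y_1.\cdots:C\ra type(M_1)=type(M)$, and $v_{(B,C)}$ is added to $\G_{M,k}$.

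For items (2)--(4) I would argue clause by clause using two preliminary remarks. First, $\l$-substitution, $\mu$-substitution, and renaming of $\mu$-variables all preserve types (the type preservation property, or directly $type(M_1[x:=N])=type(M_1)$, and analogously for the other two operations); hence in the $\l$-abstraction clause the arity $m$ and the types $A_1,\dots,A_m,B$ — which are read off $type(M_1)$ alone — coincide for $M$ and for its substituted or renamed instance, so the same branch of Definition \ref{ch2:tr_lm_lmi} fires on both sides. Second, the auxiliary variables $y_i$ and $z$ are fresh, hence (after the usual $\a$-conventions) disjoint from and unaffected by the variable being substituted or renamed, and they may be chosen literally the same on the two sides of each identity, the translation being determined only up to their renaming. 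Granting these, every clause except the interaction of $\mu$-substitution with a translated $[\a]$-subterm is a routine push of the operation through the sub-translations followed by the induction hypothesis. For item (3) on $M=[\a]M_1$ one unfolds the definition of $\mu$-substitution: $([\a][\![M_1]\!]_k)[\a:=_r[\![N]\!]_k]=[\a]\big([\![M_1]\!]_k[\a:=_r[\![N]\!]_k]\;[\![N]\!]_k\big)$, which by the induction hypothesis equals $[\a]\big([\![M_1[\a:=_rN]]\!]_k\;[\![N]\!]_k\big)=[\![[\a](M_1[\a:=_rN]\;N)]\!]_k=[\![([\a]M_1)[\a:=_rN]]\!]_k$; when the outer binder is $\m\b$ with $\b\neq\a$ one additionally uses that $\b\in Fv(M_1)$ iff $\b\in Fv(M_1[\a:=_rN])$, so again the same clause applies.

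The main obstacle is precisely the $\l$-abstraction clause with $lh(type(M))\leq k$: it is the only non-homomorphic clause of the translation, so one must (i) carry out the fiddly typing of $\l x.\l y_1.\cdots\l y_m.\langle([\![M_1]\!]_k\,y_1\cdots y_m),x\rangle$ for item (1), and (ii) verify for items (2)--(4) that pushing a substitution or renaming of a variable distinct from $x,y_1,\dots,y_m$ through this term commutes with the induction hypothesis — the key being that the data $m$ and $A_1,\dots,A_m,B$ extracted from $type(M_1)$ are invariant under the operation. A subsidiary nuisance, best disposed of by a single remark, is the bookkeeping of the fresh variables $z$ and of whether they (and not only the $v_{(A,B)}$) belong in $\G_{M,k}$.
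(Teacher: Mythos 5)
Your proof is correct and follows exactly the route the paper takes, namely induction on $comp(M)$ through the clauses of the translation (the paper's own proof is stated in one line, "By induction on $comp(M)$", and your write-up supplies the details, including the genuinely delicate $\l$-abstraction clause and the observation that the fresh variables $z$ should also be recorded in $\G_{M,k}$). No gaps.
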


\begin{proof}
By induction on $comp(M)$.
\end{proof}

Our aim is to prove $\eta (M)\leq \eta ([\![M]\!]_k)$. The assertions of Subsection \ref{ch2:subseceta} and Lemma
\ref{ch2:etrd} prepare the proof of that statement, which is the claim of Lemma \ref{ch2:lhtr}.

\begin{lem} \label{ch2:etrd}
If $M\ra M'$ and $rank(M)\leq k$, then $\eta ([\![M']\!]_k)+1\leq \eta ([\![M]\!]_k)$.
\end{lem}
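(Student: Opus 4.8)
## Proof Proposal for Lemma \ref{ch2:etrd}

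The plan is to proceed by case analysis on the redex $R$ contracted in the step $M \ra M'$, following the structure of the translation $[\![\cdot]\!]_k$ in Definition \ref{ch2:tr_lm_lmi} and leaning on the $\eta$-calculations already established in Subsection \ref{ch2:subseceta} (Lemmas \ref{ch2:etr}, \ref{ch2:eth}, \ref{ch2:etm}, \ref{ch2:lhm}) together with the substitution compatibility of the translation (Lemma \ref{ch2:sbth}). The argument is really an induction on $comp(M)$: if the contracted redex $R$ lies strictly inside $M$ (not at the root), then $M = C[R]$ for some nontrivial context, the translation commutes with that context up to the auxiliary $\langle\cdot,\cdot\rangle$-wrappers and new variables, and the induction hypothesis applied to the immediate subterm containing $R$ gives the inequality, since $\eta$ is monotone under the term-forming operations and the wrappers $v_{(A,B)}$, $[\alpha]z$, $z$ contribute no redexes. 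So the heart of the matter is the case where $R$ is the head redex of $M$, i.e. $M$ itself is a $\beta$-, $\mu$-, $\rho$- or $\theta$-redex.

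For the root cases I would handle each redex form in turn. If $M = (\mu\alpha. M_1) M_2 \ora{P}$ with $\alpha \in Fv(M_1)$: then $[\![M]\!]_k = (\mu\alpha.[\![M_1]\!]_k)[\![M_2]\!]_k \ora{[\![P]\!]_k}$, and by Lemma \ref{ch2:sbth}(3) this reduces in one $\mu$-step to $[\![M']\!]_k$; so Lemma \ref{ch2:etm}(1) gives $\eta([\![M]\!]_k) = \eta([\![M']\!]_k)+1$, which is even stronger than required. The case $\alpha\notin Fv(M_1)$ is the one where the translation genuinely intervenes: $[\![\mu\alpha. M_1]\!]_k = \mu\alpha.\langle [\![M_1]\!]_k,[\alpha]z\rangle$, so $[\![M]\!]_k = (\mu\alpha.\langle[\![M_1]\!]_k,[\alpha]z\rangle)\ora{[\![P]\!]_k}$; here $[\![M']\!]_k = [\![\mu\alpha. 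M_1[\alpha:=_r M_2]]\!]_k = \mu\alpha.\langle[\![M_1]\!]_k[\alpha:=_r[\![M_2]\!]_k],[\alpha](z\;[\![M_2]\!]_k)\rangle$, and Lemma \ref{ch2:lhm} (applied with $N=[\![M_2]\!]_k$, after peeling off the $\mu$-step) yields the bound. For $M=(\l x.M_1)M_2\ora P$ I use the two clauses of the $\l$-translation: one must check that $[\![M]\!]_k$ still contracts its head $\beta$-redex and that the residual term is exactly $[\![M']\!]_k$ (this is precisely why the $\l$-abstraction translation is designed to survive a $\beta$-step), then invoke Lemma \ref{ch2:etr}. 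The $\rho$-case $M=[\alpha]\mu\beta. M_1$ uses $[\![M]\!]_k=[\alpha]\mu\beta.[\![M_1]\!]_k$, a single $\rho$-step (Lemma \ref{ch2:sbth}(4) for the renaming), and Lemma \ref{ch2:eth}(1); the $\theta$-case $M=\mu\alpha.[\alpha]M_1$ with $\alpha\notin Fv(M_1)$ gives $[\![M]\!]_k=\mu\alpha.[\alpha][\![M_1]\!]_k$, a $\theta$-step, and Lemma \ref{ch2:eth}(2).

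The step I expect to be the main obstacle is the $\l$-abstraction case when $M=\l x.M_1$ is itself the left-hand side of the head $\beta$-redex, i.e. $M$ occurs as $(\l x.M_1)M_2\ora P$: the translation replaces $\l x. M_1$ either by $\l x.\langle[\![M_1]\!]_k,x\rangle$ or by $\l x.\l y_1\ldots\l y_m.\langle([\![M_1]\!]_k\;y_1\ldots y_m),x\rangle$ depending on whether $lh(type(\l x.M_1))>k$, and one must verify carefully that in both sub-cases contracting the outermost $\beta$-redex of $[\![M]\!]_k$ produces a term whose further reduction behaviour dominates $[\![M']\!]_k$ — in the second sub-case extra $\l y_i$-abstractions and a pairing get introduced, and one has to argue (using $rank(M)\le k$, hence $lh(type)\le k$, so the second clause applies with $m\ge 0$) that the $y_i$'s can be fed the residuals of the arguments in $\ora P$ and the count still works out via Lemma \ref{ch2:etr}. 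This is exactly the "minor imprecision of Xi" flagged in the introduction, so it deserves the most care; the remaining cases are routine once the substitution lemma and the $\eta$-lemmas of Subsection \ref{ch2:subseceta} are in hand.
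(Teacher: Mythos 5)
Your proposal follows essentially the same route as the paper: induction on $comp(M)$ with the inner-redex cases discharged by the induction hypothesis, Lemma \ref{ch2:sbth} to commute the translation with substitution in the root $\beta$-, $\mu$- (with $\alpha\in Fv(M_1)$), $\rho$- and $\theta$-cases, Lemma \ref{ch2:lhm} for the vacuous $\mu$-case, and the hypothesis $rank(M)\leq k$ to force the argument-absorbing clause of the $\lambda$-translation at a contracted $\beta$-redex. One small correction: in the case $\alpha\notin Fv(M_1)$ the term $\mu\alpha.\langle[\![M_1]\!]_k,[\alpha](z\;[\![M_2]\!]_k)\rangle$ is the one-step reduct of $[\![M]\!]_k$, not $[\![M']\!]_k$ (which keeps $[\alpha]z$ and drops $M_2$ altogether); this is precisely the discrepancy that Lemma \ref{ch2:lhm} is there to absorb, as you in fact use it.
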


\begin{proof}
By induction on $comp(M)$.
\begin{enumerate}
\item If $M=\l x.M_1$, the induction hypothesis applies.
\item If $M=(\l x.M_1) M_2\dots M_n$. We have $lh(type(\l x.M_1))\leq k$ by virtue of the assumption $rank(M)\leq k$.
Let $type(M_1)=A_1\ra \dots A_m\ra B$, where $B$ is atomic. Let
$M'=(M_1[x:=M_2]\dots M_n)$, otherwise the induction hypothesis
applies. Since $B$ is atomic, $m\geq n-2$ holds. Then
$[\![M]\!]_k \ra \l y_1.\dots \l y_m.\langle ([\![M_1]\!]_k[x:=[\![M_2]\!]_k]\;y_1\dots
y_m),[\![M_2]\!]_k \rangle \dots [\![M_n]\!]_k$\\
$\fl \l y_{n-1}.\dots \l y_m.\langle ([\![M_1]\!]_k[x:=[\![M_2]\!]_k]\dots
[\![M_n]\!]_k\;y_{n-1}\dots y_m),[\![M_2]\!]_k\rangle$.
Lemma \ref{ch2:sbth} gives
$([\![M_1]\!]_k[x:=[\![M_2]\!]_k]\;[\![M_3]\!]_k\dots [\![M_n]\!]_k)=
([\![M_1[x:=M_2]]\!]_k\;[\![M_3]\!]_k\dots [\![M_n]\!]_k) =$\\
$([\![M_1[x:=M_2]\;M_3\dots M_n]\!]_k)=[\![M']\!]_k$, by which the result follows.
\item If $M=(\mu \a. M_1) M_2\dots M_n$, we may assume again that
$M\ra^{R}M'$, where $R=(\m \a. M_1) M_2$.
\begin{enumerate}
\item If $\a \in Fv(M_1)$, let $M'=(\mu \a. M_1[\a :=_rM_2]\dots M_n)$.
We have, by Lemma \ref{ch2:sbth},
$[\![M]\!]_k = (\mu \a. [\![M_1]\!]_k)[\![M_2]\!]_k\dots [\![M_n]\!]_k \;\ra\;(\mu
\a. [\![M_1]\!]_k[\a :=_r[\![M_2]\!]_k]\dots [\![M_n]\!]_k) = (\mu \a. [\![M_1[\a
:=_rM_2]]\!]_k\dots [\![M_n]\!]_k)\;=\;[\![M']\!]_k$.
\item If $\a \notin Fv(M_1)$, then $M'=(\m \a. M_1)M_3\dots M_n$ and $[\![M]\!]_k \ra$\\
$(\m \a. \langle [\![M_1]\!]_k, [\a](z\;[\![M_2]\!]_k)\rangle)[\![M_3]\!]_k\dots [\![M_n]\!]_k$.
We may assume $\a\notin Fv(M_i)$ $(1\leq i\leq k)$. Then
Lemma \ref{ch2:lhm} gives \\
$\eta ((\m \a. \langle [\![M_1]\!]_k, [\a](z\;[\![M_2]\!]_k)\rangle)[\![M_3]\!]_k\dots [\![M_n]\!]_k)\geq$ \\
$\eta((\m \a. \langle [\![M_1]\!]_k,
[\a]z\rangle)[\![M_3]\!]_k\dots [\![M_n]\!]_k)+\eta([\![M_2]\!]_k)+1$.
Moreover, by induction on $n$, we obtain that $\eta ([\![M']\!]_k)\leq$
$\eta ((\m \a. \langle [\![M_1]\!]_k,
[\a]z\rangle)[\![M_3]\!]_k\dots [\![M_n]\!]_k)$, by which the result
follows.
\end{enumerate}
\item If $M=[\a]M_1$, the only interesting case is $M=[\a]\m \b. M_1' \ra M_1'[\b :=\a
]$. If $\b \in Fv(M_1')$, then $[\![M]\!]_k=(\a\;\m \b. [\![M_1']\!]_k)$.
Otherwise, $[\![M]\!]_k=[\a]\m \b.\lan [\![M_1']\!]_k ,[\b]z\ran$.
Applying Lemma \ref{ch2:sbth}, in both cases we obtain the result.
\item The case $M=\m \a. M_1$ is analogous to the previous one.
\item If $M=(x\;M_1\overrightarrow{P})$, the induction hypothesis applies. \qedhere
\end{enumerate}
\end{proof}

Prior to proving the next lemma, we demonstrate with an example that the hypothesis $rank(M)\leq k$ was
indeed necessary for the validity of Lemma \ref{ch2:etrd}.

\begin{exa}
Let $M=(\l x.\l y.y\;x)\;y$. Then $M'=(\l y.y)y$.
Assume $x,y:A$. Then $rank(\lambda x.\lambda y.y)=2$, which means $rank(M)=2$. Let $k=1$.
Then

$[\![M]\!]_1=((\l x.\lan [\![\l y.y]\!]_1,x\ran) x)y$, and
$[\![M']\!]_1=(\l y.\lan y,y\ran)y$.

Since $[\![\l y.y]\!]_1=\l y.\lan y,y\ran$ is not a redex, we have $\eta([\![M]\!]_1)=\eta([\![M']\!]_1)=1$,
thus the statement of Lemma \ref{ch2:etrd} is not valid for $M$.
\end{exa}

\begin{lem} \label{ch2:lhtr}
If $M$ is a $\l \mu$-term such that $rank(M)\leq k$, then $\eta
(M)\leq \eta ([\![M]\!]_k)$.
\end{lem}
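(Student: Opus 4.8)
The plan is to argue by a simple induction on $\eta(M)$. This quantity is well defined: $M$ is a simply typed $\l\m$-term, hence strongly normalizing by Theorem \ref{ch2:snforlm}; and $[\![M]\!]_k$ is a $\l\m$I-term (Lemma \ref{ch2:lIvar}) which is typable in $\G,\G_{M,k}$ by Lemma \ref{ch2:sbth}(1), hence strongly normalizing as well, so $\eta([\![M]\!]_k)$ is also defined. The base case $\eta(M)=0$ is trivial, since then $\eta(M)=0\leq\eta([\![M]\!]_k)$.

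For the induction step, suppose $\eta(M)>0$. I would pick a reduction sequence out of $M$ of maximal length and let $M\ra M'$ be its first step. Then $\eta(M')=\eta(M)-1$: the remainder of the chosen sequence witnesses $\eta(M')\geq\eta(M)-1$, while prepending $M\ra M'$ to an arbitrary reduction sequence starting from $M'$ shows $\eta(M')\leq\eta(M)-1$. By Lemma \ref{ch2:rank} we have $rank(M')\leq rank(M)\leq k$, so the induction hypothesis applies to $M'$ and gives $\eta(M')\leq\eta([\![M']\!]_k)$. On the other hand, since $rank(M)\leq k$, Lemma \ref{ch2:etrd} yields $\eta([\![M']\!]_k)+1\leq\eta([\![M]\!]_k)$. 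Combining the two inequalities, $\eta(M)=\eta(M')+1\leq\eta([\![M']\!]_k)+1\leq\eta([\![M]\!]_k)$, which is exactly the claim.

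There is no genuine obstacle remaining in this step: the real work has already been done in Lemma \ref{ch2:etrd}, which says that each single reduction of $M$ is faithfully reflected, with a strict drop of at least one, in the translation $[\![M]\!]_k$. The role of the rank non-increase Lemma \ref{ch2:rank} here is precisely to guarantee that the hypothesis $rank(\cdot)\leq k$ survives along the chosen reduction path, so that both the induction hypothesis and Lemma \ref{ch2:etrd} can be invoked for $M'$. The only point I would take care to spell out, rather than leave implicit, is the equality $\eta(M')=\eta(M)-1$ for the first step of a reduction sequence of maximal length; everything else is immediate chaining of inequalities.
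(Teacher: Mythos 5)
Your argument is correct, but it takes a genuinely different and much shorter route than the paper's. The paper proves the lemma by induction on $(\eta([\![M]\!]_k),comp(M))$ with a full structural case analysis on $M$, re-deriving in each branch the precise relation between $\eta(M)$ and $\eta$ of the relevant reduct via the lemmas of Subsection \ref{ch2:subseceta} (for instance the additive identities $\eta(M)=\eta(N)+\eta(M_2)+1$ of Lemmas \ref{ch2:etr} and \ref{ch2:lgi} in the erasing cases), invoking Lemma \ref{ch2:etrd} only in some of the branches. You instead observe that Lemma \ref{ch2:etrd}, as stated, applies to an \emph{arbitrary} one-step reduction $M\ra M'$, not only to the contraction of the head redex, so the whole lemma follows by walking down a maximal reduction sequence: $\eta(M)=\eta(M')+1\le\eta([\![M']\!]_k)+1\le\eta([\![M]\!]_k)$, with Lemma \ref{ch2:rank} ensuring that the hypothesis $rank(\cdot)\le k$ survives so that both Lemma \ref{ch2:etrd} and the induction hypothesis may be invoked. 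This is in fact exactly the pattern the paper itself uses in the case $M=\m\a.M_1$ with $\a\in Fv(M_1)$ (``$M'$ was arbitrary and $\eta(M)=\max\{\eta(M')+1\mid M\ra M'\}$''), applied uniformly instead of case by case. The one point your proof leans on, and which is worth stating explicitly, is that Lemma \ref{ch2:etrd} is indeed proved for every one-step reduct and not merely for the head reduct; its proof does dispatch the non-head steps by its own induction hypothesis, so this reliance is legitimate. What the paper's longer proof buys is the explicit quantitative bookkeeping for each syntactic shape; what yours buys is the recognition that all the combinatorial content already sits in Lemma \ref{ch2:etrd}, making the present lemma a two-line corollary.
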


\begin{proof}
By induction on $( \eta ([\![M]\!]_k),comp(M))$.
\begin{enumerate}
\item If $M=\l x.M_1$, then, by the induction hypothesis we have the result.
\item If $M=(x\;M_1\dots M_n)$, then, by the induction hypothesis,
$\eta (M)=\eta (M_1)+\dots +\eta (M_n)\leq \eta ([\![M_1]\!]_k)+\dots +
\eta ([\![M_n]\!]_k)=\eta ([\![M]\!]_k)$.
\item If $M=(\l x.M_1) M_2\dots M_n$, let $M'=(M_1[x:=M_2]\dots
M_n)$. If follows from Lemma \ref{ch2:rank} that $rank(M')\leq
k$.
\begin{itemize}
\item[-] $x\in Fv(M_1)$: By Lemmas \ref{ch2:etrd} and \ref{ch2:etr} and the
induction hypothesis,
$\eta (M)= \eta (M')+1\leq \eta ([\![M']\!]_k)+1\leq \eta ([\![M]\!]_k)$.
\item[-] $x\notin Fv(M_1)$: $[\![M]\!]_k\ra_{\beta}\l y_1.\dots \l y_m.\langle ([\![M_1]\!]_k\;y_1\dots
y_m) , [\![M_2]\!]_k\rangle\ldots [\![M_n]\!]_k=U$. By Lemma \ref{ch2:etr2}, we are ready, if we prove $\eta([\![(M_1\;M_3\ldots M_n)]\!]_k\leq \eta(U)$.
By the choice of $m$, we have $m\geq n-2$, hence\\
$U\fl \langle ([\![M_1]\!]_k\;[\![M_3]\!]_k\dots
[\![M_m]\!]_k) , [\![M_2]\!]_k\rangle\ldots [\![M_n]\!]_k$, from which the conclusion follows.
\end{itemize}
\item Let $M=(\mu \a. M_1)M_2\dots M_n$.
\begin{itemize}
\item[-] If $\a \in Fv(M_1)$, let $M'=(\mu \a. M_1[\a :=_rM_2]\dots M_n)$.
Then $rank(M')\leq k$ by
Lemma \ref{ch2:rank} again. We have, by Lemmas
\ref{ch2:etrd}, \ref{ch2:lgi} and the induction hypothesis,
$\eta (M)=\eta (M')+1\leq \eta ([\![M']\!]_k)+1=\eta ([\![M]\!]_k)$.
\item[-] If $\a \notin Fv(M_1)$, let $M'=(\m \a. M_1)M_3\dots M_n$.
We have\\
$[\![M]\!]_k \ra  (\m \a. \langle [\![M_1]\!]_k,
[\a](z\;[\![M_2]\!]_k)\rangle)[\![M_3]\!]_k\dots
[\![M_n]\!]_k$, which, together with Lemmas
\ref{ch2:etm}, \ref{ch2:lhm}, \ref{ch2:rank} and the induction hypothesis, yields that
$\eta (M)\leq \eta (M')+\eta (M_2)+1\leq \eta ([\![M']\!]_k)+\eta ([\![M_2]\!]_k)+1\leq \eta ([\![M]\!]_k)$.
\end{itemize}
\item Let $M=\m \a. M_1$.
\begin{itemize}
\item[-] Assume $\a \in Fv(M_1)$. If $\mu \a. M_1=\m \a. [\a]M_2$ is a
$\th$-redex, then, by Lemmas \ref{ch2:eth}, \ref{ch2:rank} and
the induction hypothesis, $\eta (M)=\eta(M_2)+1\leq
\eta([\![M_2]\!]_k)+1=\eta([\![M]\!]_k)$. Otherwise, let $\m \a. M_1\ra M'$.
Since $\m \a. M_1$ is not a $\th$-redex, we have $M'=\m \a. M_1'$
together with $rank(M')\leq k$. By Lemma \ref{ch2:etrd}, we can
apply the induction hypothesis to $M'$, that is,
$\eta (\m \a. M_1')+1\leq \eta ([\![\m \a. M_1']\!]_k)+1\leq \eta ([\![\m \a. M_1]\!]_k)$.
But $M'$ was arbitrary and $\eta (M)=max\{\eta (M')+1\;|\;M\ra
M'\}$, which proves our assertion.
\item[-] If $\a \notin Fv(M_1)$, then we can apply the induction hypothesis to $M_1$.
\end{itemize}
\item Let $M=[\a]\m \b. M'$. Similar to the previous case by using Lemma \ref{ch2:erh}. \qedhere
\end{enumerate}
\end{proof}

The following theorem is the main result of our paper. Interestingly, as mentioned before, we obtain the same
bound for the $\lmrt$-calculus as that for the $\l$-calculus \cite{Xi}.

\begin{thm} \label{ch2:bflmr}
If $M$ is a $\l \m$-term such that $rank(M)=k$, then every
$\bmrt$-reduction sequence starting from $M$ is of length less than
$tower(k+1,(2k+3)\cdot comp(M))$.
\end{thm}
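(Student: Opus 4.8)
The plan is to deduce the bound directly from Corollary \ref{ch2:bfi} by pushing $M$ through the translation $[\![\,\cdot\,]\!]_k$ with $k=rank(M)$. Since we work with simply typed terms, $M$ is typable, hence strongly normalizing by Theorem \ref{ch2:snforlm}, so $\eta(M)$ is defined and every $\bmrt$-reduction sequence $\si$ starting from $M$ satisfies $|\si|\leq\eta(M)$; it therefore suffices to bound $\eta(M)$.

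First I would apply Lemma \ref{ch2:lhtr}: since $rank(M)\leq k$ holds trivially for $k=rank(M)$, we get $\eta(M)\leq\eta([\![M]\!]_k)$. Next I would record the three properties of the translated term that are needed: $[\![M]\!]_k$ is a $\l\m$I-term by Lemma \ref{ch2:lIvar}, it is typable --- hence strongly normalizing --- by Lemma \ref{ch2:sbth}(1) together with Theorem \ref{ch2:snforlm}, and $rank([\![M]\!]_k)=rank(M)=k$ by Lemma \ref{ch2:rankeq}. Corollary \ref{ch2:bfi}, applied to $[\![M]\!]_k$, then yields that every reduction sequence starting from $[\![M]\!]_k$ has length less than $tower(k+1,comp([\![M]\!]_k))$; in particular $\eta([\![M]\!]_k)<tower(k+1,comp([\![M]\!]_k))$.

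Finally I would control the size of the translated term: by Lemma \ref{ch2:sizetr} we have $comp([\![M]\!]_k)\leq(2k+3)\cdot comp(M)$, and since the function $tower(k+1,\cdot)$ is monotone nondecreasing in its second argument --- immediate from the definition of $tower$, which is an iterated composition of the monotone map $m\mapsto 2^m$ --- we conclude
\[
|\si|\;\leq\;\eta(M)\;\leq\;\eta([\![M]\!]_k)\;<\;tower(k+1,comp([\![M]\!]_k))\;\leq\;tower(k+1,(2k+3)\cdot comp(M)),
\]
which is exactly the assertion.

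There is no real obstacle here: all the substantive work has been done in Section \ref{section:3} (the bound for $\l\m$I-terms, Corollary \ref{ch2:bfi}) and in the preceding subsection (the translation and Lemma \ref{ch2:lhtr}). The only small points deserving a line of justification are the well-definedness of $\eta$ on both $M$ and on $[\![M]\!]_k$ --- guaranteed by preservation of typability under the translation, Lemma \ref{ch2:sbth}(1), and strong normalization, Theorem \ref{ch2:snforlm} --- and the monotonicity of $tower$ in its second argument, which is what licenses replacing the exact value $comp([\![M]\!]_k)$ by the clean upper bound $(2k+3)\cdot comp(M)$ in the final estimate.
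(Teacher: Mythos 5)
Your proof is correct and follows exactly the paper's own argument: the chain $\eta(M)\leq\eta([\![M]\!]_k)<tower(k+1,comp([\![M]\!]_k))\leq tower(k+1,(2k+3)\cdot comp(M))$ via Lemmas \ref{ch2:lhtr}, \ref{ch2:rankeq}, \ref{ch2:sizetr} and Corollary \ref{ch2:bfi} is precisely the paper's proof of Theorem \ref{ch2:bflmr}. Your extra remarks on the well-definedness of $\eta$ and the monotonicity of $tower$ are sound but only make explicit what the paper leaves implicit.
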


\begin{proof}
We obtain, by Lemma \ref{ch2:sizetr}, $comp([\![M]\!]_k)\leq (2k+3)\cdot
comp(M)$ and, by Lemma \ref{ch2:rankeq}, $rank([\![M]\!]_k)=rank(M)$.
These, together with Corollary
\ref{ch2:bfi} and Lemma \ref{ch2:lhtr}, imply
$\eta (M) \leq  \eta ([\![M]\!]_k) < tower(k+1,comp([\![M]\!]_k))
\leq tower(k+1,(2k+3)\cdot  comp(M))$.
\end{proof}

\section{Concluding remarks}\label{final:section}

In what follows, we give a short account of the other possibilities for obtaining bounds for the reduction sequences in the $\lm$-calculus.
We could have also begun our paper with these considerations, however the methods below do not give such full-fledged results as the one
discussed above (the bounds are higher and, more importantly, we were unable to treat the additional rules by the arguments presented below).
By this reason, we decided to deal with these discussions only after the main argument of the paper. We could resort to the idea of
translating the $\lm$-calculus into the $\l$-calculus by a CPS-translation such that the sizes of the translated terms and the lengths
of their reduction sequences would depend on the sizes and lengths of the original terms. Then the bound for the $\l$-calculus would
provide us with a bound for the $\lm$-calculus, too. By examining this idea, we have come to the conclusion that we were not able to simulate every
reduction rule, if we apply the already existing translations, and even the
bound would be much worse than the one appearing in our result. We investigate these questions in detail below.

\subsection{A possible attempt to compute an upper bound for the \texorpdfstring{$\lmr$}{lambda-mu-rho}-calculus}

In the following observations we confine our attention to the case
of the $\lmr$-calculus. In order to establish a bound for the lengths
of reduction sequences of the $\lmr$-calculus it seems to be a
natural idea to try to transform a reduction sequence of the
$\lmr$-calculus into a reduction sequence of the $\l$-calculus. We
go round this approach a little bit more detailed: we present the
CPS-translation from the simply-typed $\lmr$-calculus to the
simply-typed $\l$-calculus introduced by de Groote \cite{de Gro2},
and then we give an account of the possibilities of finding an
appropriate bound with this method. The notation for the
CPS-translation is taken from de Groote
\cite{de Gro2}. As to a bound for the simply-typed $\l$-calculus we
regard the one presented in Xi \cite{Xi}.

\begin{defi}
Let $o$ be some distinguished atomic type.
\begin{enumerate}
\item For every type $A$, we define the three types
$A^o$, $\sim A$ and $\overline{A}$ by : $\sim A=A\rightarrow o$, $\overline{A}=\sim
\sim A^o$, ${\perp}^o=o$, $X^o=X$, if $X$ is atomic, and $(B\rightarrow C)^o=\overline{B}\rightarrow \overline{C}$.
\item  Let $\G_{\l}$ (resp. $\G_{\mu}$) denote a $\l$-context (resp. $\m$-context),
that is, a finite (possibly empty) set of declarations
$x_1:A_1,\dots,x_n:A_n$ (resp. $\a_1:\neg B_1,\dots,\a_m : \neg B_m$).
We define $\overline{\G_{\l}}$ (resp. $\sim \G_{\m} ^o$) by
$x_1:\overline{A_1},\dots,x_n:\overline{A_n}$ (resp. $\a_1:\sim B_1^o,\dots,\a_m : \sim B_m^o$).
\end{enumerate}
\end{defi}

We suppose that the $\mu$ -variables of the $\lmr$-calculus
are also $\l$ -variables of the $\l$-calculus.

\begin{defi}
The CPS-translation $\up{M}$ of a $\lmr$-term $M$ is defined as
follows.
\begin{itemize}
\item $\up{x}=\l k.(x \;k)$,
\item $\up{\l x.M}=\l k.(k \; \l x.\up{M})$,
\item $\up{(M\;N)}=\l k.(\up{M} \;\l m. (m \; \up{N}\;k))$,
\item $\up{\mu \a. M}=\l \a. (\up{M} \;\l k. k)$,
\item $\up{[\a] M}=\l k.(\up{M} \;\a)$.
\end{itemize}
\end{defi}

\begin{lem} Let $M:A$ be a typable term with
$\l$-context $\G_{\l}$ and $\m$-context $\G_{\m}$. Then its
CPS-translation, $\overline{M}$, is typable with contexts
$\overline{\G_{\l}}$ and $\sim \G_{\m}^o$.
\end{lem}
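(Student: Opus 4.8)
The plan is to strengthen the statement so that the induction also carries the type: I will show that whenever $\G_{\l},\G_{\m}\v M:A$ holds in the $\lmr$-calculus, the judgement $\overline{\G_{\l}},\,\sim\!\G_{\m}^o\v\overline{M}:\overline{A}$ holds in the simply typed $\l$-calculus. Keeping track of $\overline{A}$ is necessary, since the clauses of $\overline{(\,\cdot\,)}$ for application and for $\m$-abstraction refer to the types of proper subterms. The argument is then a routine induction on $M$ (equivalently on $comp(M)$, or on the typing derivation), each case being discharged by unfolding the definitions of $\overline{M}$, $A^o$, $\sim\!A$ and $\overline{A}$ together with three identities that are immediate from those definitions: $\overline{A}=\sim\!\sim\!A^o$; $(B\ra C)^o=\overline{B}\ra\overline{C}$, so that $\overline{B\ra C}=\sim\!\sim\!(\overline{B}\ra\overline{C})$; and $\bot^o=o$, so that $\overline{\bot}=\sim\!\sim\! o$ and $\sim\!\bot^o=\sim\! o$. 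Throughout, the vacuous abstractions that the translation sometimes introduces (e.g.\ the $k$ in $\overline{[\a]M_1}$, or the $\a$ in $\overline{\m\a.M_1}$ when $\a\notin Fv(M_1)$) are harmless, since the target is the full simply typed $\l$-calculus.

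For $M=x$ with $x:A\in\G_{\l}$, one reads off $\overline{x}=\l k.(x\,k):(\sim\!A^o)\ra o=\overline{A}$ using $x:\overline{A}$ and $k:\sim\!A^o$. For $M=\l x.M_1$ with $\l x.M_1:A\ra B$, the induction hypothesis gives $\l x.\overline{M_1}:\overline{A}\ra\overline{B}$, whence $\overline{\l x.M_1}=\l k.(k\,\l x.\overline{M_1}):\sim\!\sim\!(\overline{A}\ra\overline{B})=\overline{A\ra B}$ for $k:\sim\!(\overline{A}\ra\overline{B})$. For $M=(M_1\,M_2)$ with $M_1:A\ra B$, $M_2:A$: by the induction hypothesis $\overline{M_1}:\sim\!\sim\!(\overline{A}\ra\overline{B})$ and $\overline{M_2}:\overline{A}$; with $k:\sim\!B^o$ and $m:\overline{A}\ra\overline{B}$ we get $(m\,\overline{M_2}\,k):o$, hence $\l m.(m\,\overline{M_2}\,k):\sim\!(\overline{A}\ra\overline{B})$, which $\overline{M_1}$ consumes to yield $o$, and abstracting $k$ gives $\overline{(M_1\,M_2)}:\sim\!\sim\!B^o=\overline{B}$. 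For $M=[\a]M_1$ with $\a:\neg A$ and $M_1:A$ (so $M:\bot$): the entry $\a:\neg A$ is translated to $\a:\sim\!A^o$, the induction hypothesis gives $\overline{M_1}:(\sim\!A^o)\ra o$, so $(\overline{M_1}\,\a):o$ and $\overline{[\a]M_1}=\l k.(\overline{M_1}\,\a):(\sim\! o)\ra o=\overline{\bot}$ with $k:\sim\!\bot^o=\sim\! o$.

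The remaining case is $M=\m\a.M_1$ with $\m\a.M_1:A$ obtained from $\G_{\m},\a:\neg A$ and $M_1:\bot$. The induction hypothesis, applied with the extended $\m$-context, gives $\overline{\G_{\l}},\,\sim\!\G_{\m}^o,\,\a:\sim\!A^o\v\overline{M_1}:\overline{\bot}=\sim\!\sim\! o$. Since $\l k.k:o\ra o=\sim\!\bot^o$, we have $(\overline{M_1}\,\l k.k):o$, and then abstracting $\a$ — which discharges the declaration $\a:\sim\!A^o$ and restores the context to $\overline{\G_{\l}},\,\sim\!\G_{\m}^o$ — yields $\overline{\m\a.M_1}=\l\a.(\overline{M_1}\,\l k.k):(\sim\!A^o)\ra o=\sim\!\sim\!A^o=\overline{A}$, as required.

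There is no real obstacle: the translation is a syntactic rendering of the double-negation embedding, and the proof is pure type-bookkeeping. The one point where the exact shape of the definitions matters — and the only thing worth double-checking carefully — is the asymmetric handling of $\m$-variables: a declaration $\a:\neg A$ becomes $\a:\sim\!A^o$, carrying a \emph{single} negation over $A^o$ rather than the double negation placed on $\l$-variables, and it is precisely this that makes the outer $\l\a$ in $\overline{\m\a.M_1}$ produce $\sim\!\sim\!A^o=\overline{A}$. Dually, one must remember that the bodies of $[\a]M_1$ and $\m\a.M_1$ have type $\bot$, so that $\overline{\bot}=\sim\!\sim\! o$ and $\sim\!\bot^o=\sim\! o$ are what make the auxiliary subterms $\a$ and $\l k.k$ typecheck there; every other case is an immediate unfolding of the definitions.
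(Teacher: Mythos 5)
Your proof is correct: the strengthened induction hypothesis $\overline{\G_{\l}},\,\sim\!\G_{\m}^o\v\overline{M}:\overline{A}$ is the right invariant, and each case checks out (in particular the single-negation typing $\a:\sim\!A^o$ of $\m$-variables and the identities $\overline{\bot}=\sim\!\sim o$, $\sim\!\bot^o=\sim o$ are handled correctly). The paper states this lemma without proof, citing it as part of de Groote's CPS-translation, and your argument is exactly the standard type-bookkeeping induction one would supply.
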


\begin{defi}
Let $=_{\l}$ (resp. $=_{\m}$) denote the relation defined as the
reflexive, symmetric, transitive closure of the $\b$-reduction
(resp. that of the union of the $\b$-, $\m$- and
$\rho$-reductions). As usual, we consider terms differing in
renaming of bound variables as equals.
\end{defi}

Then, in \cite{de Gro3}, de Groote proves the following result.

\begin{lem} \label{ch2:eq}
Let $M,N$ be $\lmr$-terms. Then $M=_{\mu}N$ iff $\overline{M}=_{\l}\up{N}$.
\end{lem}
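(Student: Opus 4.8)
The plan is to prove the two implications separately. For the ``only if'' direction I would show that the CPS-translation simulates each reduction rule of the $\lmr$-calculus up to $\b$-conversion --- that is, if $M\ra N$ by a single $\b$-, $\m$- or $\r$-step then $\up{M}=_{\l}\up{N}$ --- and then close under the compatible, reflexive, symmetric and transitive closure by a trivial induction on the length of the conversion witnessing $M=_{\m}N$. For the ``if'' direction I would introduce an ``un-CPS'' map $\natural$ sending the $\l$-terms that occur as CPS-translations back to $\lmr$-terms, show $\natural(\up{M})=_{\m}M$ and show that $\natural$ carries $\b$-convertible terms to $\m$-convertible terms; then $\up{M}=_{\l}\up{N}$ yields $M=_{\m}\natural(\up M)=_{\m}\natural(\up N)=_{\m}N$.

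The technical core of the ``only if'' direction is a small family of substitution and renaming lemmas, all proved by a routine induction on $comp(M)$. First, $\up{M[x:=N]}=_{\l}\up{M}[x:=\up{N}]$; the conversion, rather than a literal identity, is forced by the administrative $\b$-redexes generated by the clause $\up{x}=\l k.(x\;k)$. Second, a structural-substitution lemma: writing $\up{M}\{\a:=_{r}\up{N}\}$ for the term obtained from $\up{M}$ by replacing every application $(\up{U}\;\a)$ that arises from a translated subterm $[\a]U$ by $(\up{U}\;\l m.(m\;\up{N}\;\a))$, one has $\up{M[\a:=_{r}N]}=_{\l}\up{M}\{\a:=_{r}\up{N}\}$, the only nontrivial clause being $M=[\a]U$. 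Third, $\up{M}[\b:=\a]=\up{M[\b:=\a]}$, i.e.\ the translation commutes with renaming of $\m$-variables. Granting these, the three rules are dispatched by unfolding the definition and contracting the evident administrative redexes: for $(\l x.M)\,N\ra M[x:=N]$ one rewrites $\up{(\l x.M)\,N}=\l k.(\up{\l x.M}\;\l m.(m\;\up{N}\;k))$, contracts its two outermost redexes and invokes the first lemma; for $(\m\a.M)\,N\ra\m\a.M[\a:=_{r}N]$ one reaches $(\l\a.(\up{M}\;\l j.j))$ applied to the translation of $N$ and invokes the structural-substitution lemma; for $[\a]\m\b.M\ra M[\b:=\a]$ one computes $\up{[\a]\m\b.M}=\l k.\bigl((\l\b.(\up{M}\;\l j.j))\;\a\bigr)=_{\l}\l k.(\up{M}[\b:=\a]\;\l j.j)$ and rewrites this to $\up{M[\b:=\a]}$ using the renaming lemma together with the remaining administrative contractions. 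Induction on the conversion length then completes this direction.

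For the ``if'' direction the new ingredient is the map $\natural$. Fix the distinguished atomic type $o$ and call a typed $\l$-term a \emph{CPS-term} when it is $\b$-equal to some $\up{M}$. Since the simply typed $\l$-calculus is confluent and strongly normalizing, every CPS-term has a unique $\b$-normal form, and one checks that this normal form matches exactly one of the syntactic shapes produced by $\up{(\cdot)}$ --- here the typing discipline, and in particular the fact that $\m$-variables occur only in the configuration $[\a]U$, is what guarantees uniqueness. One then defines $\natural$ on $\b$-normal CPS-terms by reading off that shape ($\l k.(x\;k)\mapsto x$, $\l k.(k\;\l x.P)\mapsto\l x.\natural(P)$, $\l k.(P\;\l m.(m\;Q\;k))\mapsto(\natural(P)\;\natural(Q))$, $\l\a.(P\;\l k.k)\mapsto\m\a.\natural(P)$, $\l k.(P\;\a)\mapsto[\a]\natural(P)$) and on an arbitrary CPS-term through its $\b$-normal form, so that $\natural$ is constant on $\b$-classes by construction. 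It then remains to prove $\natural\bigl(\mathrm{nf}_{\b}(\up{M})\bigr)=_{\m}M$ by induction on $comp(M)$: $\b$-normalizing $\up{M}$ erases precisely the administrative redexes, and following how this interacts with each clause of $\up{(\cdot)}$ yields the identity, the $\m$- and $\r$-equations being exactly what is needed to absorb the leftover discrepancies. Given $\up{M}=_{\l}\up{N}$, their common $\b$-normal form $P$ is a CPS-term, whence $M=_{\m}\natural(P)=_{\m}N$.

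The main obstacle I expect is the ``if'' direction, and within it the two facts just used: that the $\b$-normal form of a CPS-term has a uniquely determined outermost shape, so that $\natural$ is well defined, and that $\natural(\mathrm{nf}_{\b}(\up{M}))=_{\m}M$. The recurring difficulty is the proliferation of administrative $\b$-redexes, which makes $\mathrm{nf}_{\b}(\up{M})$ differ structurally from the ``naive'' translation, so the inductions must be organized to track $\b$-normalization rather than the raw clauses of $\up{(\cdot)}$. By comparison the ``only if'' direction is essentially bookkeeping; its only step with no counterpart in the pure $\l$-calculus is the structural-substitution lemma used for the $\m$-rule.
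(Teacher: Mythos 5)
The paper does not actually prove this lemma: it is imported verbatim from de Groote \cite{de Gro3} (``Then, in \cite{de Gro3}, de Groote proves the following result''), and the surrounding section only uses it to argue that a CPS-based route would yield a worse bound. So there is no in-paper argument to compare yours against; I can only assess your reconstruction on its own terms. Your architecture --- soundness by simulating each rule up to $=_\l$ via substitution and renaming lemmas, completeness via an inverse of the translation factored through $\b$-normal forms --- is the standard one, and your map $\natural$ is playing the role of de Groote's modified CPS-translation $\up{\up{\cdot}}$, which the paper itself invokes right after the lemma.

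Two steps of your sketch have genuine gaps. First, in the $\rho$-case you arrive at $\l k.(\up{M}[\b:=\a]\;\l j.j)$ and assert that ``the remaining administrative contractions'' bring you to $\up{M[\b:=\a]}$. That step requires the outermost abstraction of $\up{M[\b:=\a]}$ to bind a variable absent from its body; this holds when $M$ is a named term $[\g]M'$ (Parigot's original syntax), but not in the relaxed syntax this paper uses, where the body of a $\m$-abstraction may be any term of type $\bot$. For $M=(y\;z)$ with $y:A\ra\bot$ the two sides have the distinct $\b$-normal forms $\l k.(y\;\l m.(m\;\up{z}\;\l j.j))$ and $\l k.(y\;\l m.(m\;\up{z}\;k))$, so $\b$-conversion alone does not close the gap; you must either restrict to named bodies or say what replaces this step. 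Second, in the completeness direction, $\b$-normalization destroys exactly the administrative structure your clauses for $\natural$ pattern-match on: $\up{(x\;y)}$ normalizes to $\l k.(x\;\l m.(m\;\up{y}\;k))$, which is not of the form $\l k.(P\;\l m.(m\;Q\;k))$ with $P$ itself a translated term, so $\natural$ as written is not total on normal forms of CPS-images. Handling applied variables (and, more generally, the interaction of normalization with nested applications) is precisely what the modified translation is designed for, and it needs to be worked out rather than asserted; as it stands the claim that every normal CPS-term ``matches exactly one of the syntactic shapes'' is false for the shapes you list.
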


Unfortunately, in Lemma \ref{ch2:eq},
$\overline{M}\fl_{\l}\up{N}$ does not hold in general, even if
$M\fl_{\mu}N$. So on one hand we cannot use the CPS-translation
to imitate the reduction sequences in the $\lmr$-calculus by
reduction sequences in the $\l$-calculus. On
the other hand there can be another drawback of this approach.\\
In general, we could make use of the CPS-translation for
estimating bounds of reduction sequences if, for any
$M\fl_{\mu}^{\si}NF(M)$, we could find a $\nu$ with
$\up{M}\fl_{\l}^{\nu} NF(\up{M})$ such that $|\si |\leq c.
|\nu |$ with some constant $c$, where $NF(M)$ and $NF(\up{M})$
denote the (unique) normal form of $M$ in the $\lmr$-calculus and of
$\up{M}$ in the $\l$-calculus, respectively. In fact, we even
know that $NF(\up{M})=\up{\up{NF(M)}}$, where $\up{\up{M}}$ stands
for the so called modified CPS-translation of the term $M$ ( de
Groote \cite{de Gro3}).

For the moment suppose for every reduction sequence
$M\fl^{\si}NF(M)$ we can find a reduction sequence $\nu$ such
that $\up{M}\fl_{\l}^{\nu} NF(\up{M})$ with $|\si |\leq c\cdot
|\nu |$. By the result for the $\b$-normalization in Xi
\cite{Xi}, we would have for any $\nu$ as above\\
$|\nu |<c\cdot  \textrm{tower}(rank(\up{M})+1,(2\cdot  rank(\up{M})+3)\cdot  comp(\up{M}))$.\\

On the other hand we have the following estimations.

\begin{lem}
Let $M$ be a $\lm$-term. Then $rank(\up{M})=3\cdot  rank(M)$ and \\
$2\cdot  comp(M)<comp(\up{M})$.
\end{lem}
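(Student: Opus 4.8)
The plan is to prove the two equalities by induction on $comp(M)$, after extracting the arithmetical content of the first one as a statement about types alone. First I would show, by induction on the structure of a type $A$, that $lh(A^o)=3\cdot lh(A)$, and consequently $lh(\sim A^o)=3\,lh(A)+1$ and $lh(\overline A)=lh(\sim\sim A^o)=3\,lh(A)+2$. The base case ($A$ atomic, or $A=\bot$) is immediate since then $A^o$ is atomic and both sides are $0$; for $A=B\ra C$ one unfolds $(B\ra C)^o=\overline B\ra\overline C$, counts arrows, and applies the induction hypothesis to get $lh((B\ra C)^o)=3\,lh(B\ra C)$. This is exactly where the factor $3$ of the statement is produced: the double-negation type translation turns each arrow of $A$ into three arrows.

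Second, the size inequality $2\cdot comp(M)<comp(\up M)$ follows by a routine induction on $comp(M)$, walking through the seven clauses defining $\up{(\cdot)}$. For $M=x$ one has $comp(\up x)=comp(\l k.(x\,k))=3>2$. In each compound case the translation wraps the translated immediate subterms in a bounded, fixed amount of new syntax (the binders $\l k$ or $\l\a$ or $\l m$, the occurrences of the bound variables, and one or two new applications), so $comp(\up M)$ exceeds the sum of the $comp(\up{M_i})$ by a constant which is at least $2$; combined with the induction hypotheses $2\,comp(M_i)<comp(\up{M_i})$ this preserves the strict inequality. There is nothing subtle here.

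The substantive part is $rank(\up M)=3\cdot rank(M)$, again by induction on $comp(M)$ with a case split on the shape of $M$. The pivotal structural observation is that $\up N$ is a $\l$-abstraction for every term $N$; hence the clauses for application and for $[\a]N$, $\m\a.N$ display explicit $\b$-redexes of $\up M$ that are not visible in $M$ itself, namely $(\up{M_1}\;\l m.(m\;\up{M_2}\;k))$ inside $\up{(M_1\,M_2)}$, $(\up{M_1}\;\a)$ inside $\up{[\a]M_1}$, and $(\up{M_1}\;\l k.k)$ inside $\up{\m\a.M_1}$. The argument then has three ingredients: (i) list all redexes of $\up M$, namely these administrative ones together with the redexes of the $\up{M_i}$ supplied by the induction hypothesis; (ii) compute the rank of each as the $lh$ of the type of its abstracting sub-term, using that $\up{M_i}$ carries type $\overline{type(M_i)}$ and that the typing of the CPS-translation fixes the remaining abstractions' types; (iii) invoke the type lemma to rewrite every such $lh$ as $3$ times an $lh$ of a type appearing in $M$, and verify that the maximum of all these quantities is $3\cdot rank(M)$.

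The hard part is ingredient (iii): an administrative redex such as $(\up{M_1}\;\a)$ has rank $lh(\overline{type(M_1)})=3\,lh(type(M_1))+2$, and one must establish that this, and the analogous contributions of the other administrative redexes, does not exceed $3\cdot rank(M)$, even though these redexes have no counterpart in $M$ whose rank could bound them directly. Carrying this through requires the case analysis to keep precise track of which $\l$-abstractions of $\up M$ actually occur in operator position, of the atomic-versus-arrow shape of the relevant types, and of how the additive constants $+1$ and $+2$ separating $lh(\sim A^o)$ and $lh(\overline A)$ from $3\,lh(A)$ are absorbed; this is the only place where genuine care is needed, the rest being mechanical once the type lemma and the observation that $\up N$ is always a $\l$-abstraction are available.
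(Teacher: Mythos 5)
Your second half is fine: the inequality $2\cdot comp(M)<comp(\up{M})$ does follow by the routine induction you describe, each clause of the translation adding at least two symbols beyond the translated subterms. (For what it is worth, the paper itself gives no proof of this lemma; it is asserted in the concluding remarks as a back-of-the-envelope justification, so there is no argument to compare yours against.) The problems are all in the rank half. First, your preliminary identity $lh(A^o)=3\cdot lh(A)$ already fails under the paper's stated definition of $lh$ as the \emph{number of arrows} of a type: since $lh(\up{B})=lh(B^o)+2$, the inductive step gives $lh((B\ra C)^o)=1+lh(\up{B})+lh(\up{C})=5+lh(B^o)+lh(C^o)$, so the multiplier is $5$, not $3$. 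The factor $3$ only appears if one reads $lh$ as the height, $lh(B\ra C)=1+\max(lh(B),lh(C))$, which is Xi's convention but not what the paper's definition says; you should at least flag which convention you are using, since the statement changes with it.

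Second, and decisively, the step you yourself single out as ``the hard part'' is not merely delicate, it cannot be carried out, because the exact equality is false. As you observe, every redex of $\up{M}$ has the form $(\up{P}\;Q)$ where $P$ ranges over operator subterms of applications and over bodies of $\m$- and $[\a]$-subterms of $M$; its rank is $lh(\up{type(P)})=3\cdot lh(type(P))+2$ on the height reading. A maximum of numbers congruent to $2$ modulo $3$ is never a multiple of $3$, so $rank(\up{M})=3\cdot rank(M)$ cannot hold as soon as $\up{M}$ has any redex. Concretely, for $M=(\l x.x)\,y$ with $x,y:X$ atomic, $rank(M)=1$, while the unique redex of $\up{M}$, namely $(\up{\l x.x})\,(\l m.(m\;\up{y}\;k))$, has rank $lh(\up{X\ra X})=5$ (or $7$ with the total-arrow count) --- not $3$. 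Moreover the subterms $P$ indexing the redexes of $\up{M}$ need not be abstractions of redexes of $M$ at all (in the example, $\l x.x$ happens to be one, but the bodies of $\m$- and $[\a]$-subterms generally are not), so even a corrected statement such as $rank(\up{M})=3\cdot\max_P lh(type(P))+2$ would still require an argument relating that maximum to $rank(M)$, which your sketch does not supply. In short: the $comp$ part of your proposal is correct, but the $rank$ part proves a statement that is false with the paper's definitions, and the additive constants you hope to ``absorb'' are precisely the obstruction.
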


This means that the best estimation for the lengths of the
reductions with this method would be greater than $c\cdot
\textrm{tower}(3\cdot  rank(M)+1,(12\cdot  rank(M)+6)\cdot  comp(M))$, and
by the direct method this upper bound is
$\textrm{tower}(rank(M)+1,(2\cdot  rank(M)+3)\cdot  comp(M))$. At
present, no CPS-translation which could yield a significantly
better estimation is known to the authors.

\subsection{A translation of the \texorpdfstring{$\lm$}{lambda-mu}-calculus into the \texorpdfstring{$\l_c^*$}{lambda-c-star}-calculus}

Some years ago David and Nour \cite{Dav-Nou5} discovered a translation of the $\lm$-calculus into the $\l$-calculus with recursive
equations for types. This is somewhat simpler than the
CPS-translation and provides an easy method for finding an
estimation for the lengths of the reduction sequences in the
$\lm$-calculus. We present a version of their translation establishing a
connection between the $\lm$-calculus and a variant of the
$\l$-calculus enlarged with some constants.  The method traces back to
Krivine \cite{Kri1,Kri2}, where he supplemented the typed calculus with
a constant of type $\forall X(\neg \neg X\ra X)$.

\begin{defi}
Enhance the set of types of the simply typed $\l$-calculus with an element $\bot$ and
define $\neg A$ as $A\ra \bot$. Let $X$ be an atomic type, add for
each $X$ a new constant $c_X$ of type $\neg \neg X\ra X$. Let us
call the new calculus as $\l_c^*$.
We define for each type $A$ a closed $\l_c^*$-term $T_A$ such
that $T_A$ has the type $\neg \neg A\ra A$.
\begin{itemize}
\item $T_{\bot}=\l y.(y \; I)$, where $I=\l x.x$,
\item $T_X=c_X$, where $X$ is atomic,
\item $T_{A\ra B}=\l x.\l y.(T_B\;\l z.(x\;\l t.(z\;(t\;y))))$.
\end{itemize}
\end{defi}

We suppose that the $\mu$ -variables of the $\lm$-calculus
are also $\l$ -variables of the $\l$-calculus.

\begin{defi}
 Let $k\geq 0$. We define a $k$-translation $\|.\|_k$ of the set of $\lm$-terms into the
set of terms of the $\l_c^*$-calculus as follows.
\begin{itemize}
\item $\|x\|_k=x$,
\item $\|\l x.M\|_k=\l x.\|M\|_k$,
\item $\|(M\;N)\|_k=(\|M\|_k\;\|N\|_k)$,
\item $\|\m \a. M\|_k=(T_A \;\l \a.\|M\|_k)$, if $\a$ has type $\neg A$ and $lh(A)\leq k$,
\item $\|\m \a. M\|_k=(z \; \|M\|_k)$, if $\a$ has type $\neg A$ and $lh(A)>k$ and where
$z:\bot \ra A$ is a new variable,
\item $\|[\a]M\|_k=(\a\;\|M\|_k)$.
\end{itemize}
\end{defi}

In the above definition the $\m$-variables and its translated
counterparts were denoted with the same letters. Let $\v_{\lm}$
and $\v_{\l_c^*}$ denote the typing relations in the $\lm$- and in
the $\l^*$-calculus, respectively. We have the following
assertions.

\begin{lem} \label{ch2:trtypepres}
Let $k\geq 0$ and $M$ a $\lm$-term. If $\G\v_{\lm}M:A$, then $\G\v_{\l_c^*}\|M\|_k:A$.
\end{lem}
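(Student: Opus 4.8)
The plan is to argue by induction on $comp(M)$ (equivalently, on the typing derivation of $\G\v_{\lm}M:A$), inspecting each clause of the definition of $\|\cdot\|_k$. Before starting the induction, I would first record the auxiliary fact — implicit in the definition of the terms $T_A$ — that for every type $A$ the closed $\l_c^*$-term $T_A$ has type $\neg\neg A\ra A$. This is a short separate induction on $A$: for $A=\bot$ one has $I=\l x.x:\neg\bot$, hence $T_\bot=\l y.(y\;I):\neg\neg\bot\ra\bot$; for atomic $X$ the constant $c_X:\neg\neg X\ra X$ is declared so by fiat; and for $A=B\ra C$, assuming $x:\neg\neg(B\ra C)$, $y:B$ and $z:\neg B$, one checks $\l t.(z\;(t\;y)):\neg(B\ra C)$, whence $(x\;\l t.(z\;(t\;y))):\bot$, whence $\l z.(x\;\l t.(z\;(t\;y))):\neg\neg B$; feeding this to $T_B:\neg\neg B\ra B$ (inductive hypothesis) gives a term of type $B$, and abstracting $x,y$ yields $T_{B\ra C}:\neg\neg(B\ra C)\ra(B\ra C)$.

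With this in hand the main induction is essentially a direct reading of the typing rules against the translation clauses. For $M=x$ the statement is the axiom. For $M=\l x.M_1$ and $M=(M_1\;M_2)$ the translation commutes with the constructor, so the inductive hypothesis together with the $\ra$-introduction, resp. $\ra$-elimination, rule gives the result. For $M=[\a]M_1$ we have $A=\bot$ and $\G,\a:\neg A'\v_{\lm}M_1:A'$; the inductive hypothesis gives $\G,\a:\neg A'\v_{\l_c^*}\|M_1\|_k:A'$, and since $\a$ is carried over as a $\l_c^*$-variable of type $\neg A'=A'\ra\bot$ we obtain $(\a\;\|M_1\|_k):\bot$. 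The two cases for $M=\m\a.M_1$ with $\a:\neg A$ differ only by the wrapper applied to $\|M_1\|_k$. If $lh(A)\le k$: from $\G,\a:\neg A\v_{\lm}M_1:\bot$ the inductive hypothesis gives $\|M_1\|_k:\bot$, so $\l\a.\|M_1\|_k:\neg\neg A$, and composing with $T_A:\neg\neg A\ra A$ yields $(T_A\;\l\a.\|M_1\|_k):A$. If $lh(A)>k$: the same inductive hypothesis gives $\|M_1\|_k:\bot$, and applying the fresh variable $z:\bot\ra A$ produces $(z\;\|M_1\|_k):A$.

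One point of bookkeeping should be flagged, exactly parallel to the role of the constant-context $\G_{M,k}$ in Lemma \ref{ch2:sbth}: strictly speaking the context on the right is $\G$ enlarged with the atomic constants $c_X$ occurring in the relevant $T_A$'s and with the fresh variables $z:\bot\ra A'$ introduced by the high-rank $\m$-clauses. This is harmless, since along the induction such declarations are only ever added, never removed, and every subterm translation is typed in a context extending the one in which the host is typed. The only genuinely non-mechanical ingredient of the proof is the auxiliary typing of $T_A$ above; the rest is routine verification, and this is the step I would expect to take the most care to get the $\neg\neg(\cdot)$-bookkeeping right.
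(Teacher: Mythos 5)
Your proof is correct and follows the only natural route --- induction on the term with a preliminary verification that $T_A:\neg\neg A\ra A$ --- which is exactly what the paper compresses into the single word ``Straightforward''; your remark about enlarging the context on the right with the constants $c_X$ and the fresh variables $z:\bot\ra A'$ is a legitimate point of bookkeeping that the paper glosses over. One small slip to fix in the $A=B\ra C$ case of the auxiliary induction: the bound variable $z$ must have type $\neg C$ (not $\neg B$), so that $(z\;(t\;y))$ with $(t\;y):C$ is well typed, the abstraction $\l z.(x\;\l t.(z\;(t\;y)))$ has type $\neg\neg C$, and it is $T_C$ (not $T_B$) that is applied to it to produce a body of type $C$, whence abstracting $y:B$ and $x:\neg\neg(B\ra C)$ gives $T_{B\ra C}:\neg\neg(B\ra C)\ra(B\ra C)$ as required.
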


\begin{proof}
Straightforward.
\end{proof}

\begin{lem} \label{ch2:trest}
Let $k\geq 0$, $M$, $N$ be $\lm$-terms such that $rank(M) \leq k$.\\
If $M \ra_{\lm}N$, then $\|M\|_k\fl^+_{\l} \|N\|_k$.
\end{lem}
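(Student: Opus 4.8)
The statement to prove is Lemma~\ref{ch2:trest}: if $rank(M)\leq k$ and $M\ra_{\lm}N$ (a single $\b$- or $\m$-reduction step), then $\|M\|_k\fl^+_{\l}\|N\|_k$, i.e.\ the $k$-translation at least simulates one step by one-or-more $\l$-steps. The plan is to argue by induction on $comp(M)$, following the compatible-closure structure of the reduction relation. The base of the induction is trivial since a variable contains no redex. For the inductive step, I would first dispose of the ``context'' cases, where the reduced redex lies strictly inside a subterm: if $M=\l x.M_1$, $M=(M_1\,M_2)$, $M=[\a]M_1$, or $M=\m\a.M_1$ with the redex contracted inside $M_1$ (resp.\ $M_2$), then $rank(M_1)\leq rank(M)\leq k$ (and likewise for $M_2$) by Lemma~\ref{ch2:rank}, the induction hypothesis gives $\|M_1\|_k\fl^+_\l\|N_1\|_k$, and since $\|\cdot\|_k$ is a homomorphism on all the term constructors and on application, compatibility of $\fl_\l$ immediately lifts this to $\|M\|_k\fl^+_\l\|N\|_k$. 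The only mild subtlety there is the $\m$-abstraction clause, where $\|\m\a.M_1\|_k$ is either $(T_A\;\l\a.\|M_1\|_k)$ or $(z\;\|M_1\|_k)$ depending on $lh(A)$ versus $k$; in both subcases the translated body $\|M_1\|_k$ still occurs as a subterm, so compatibility applies verbatim.

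The heart of the proof is the two cases where $M$ \emph{is} the redex. If $M=(\l x.M_1)M_2\ra_\b M_1[x:=M_2]=N$, then $\|M\|_k=(\l x.\|M_1\|_k)\,\|M_2\|_k\ra_\b \|M_1\|_k[x:=\|M_2\|_k]$, and this last term equals $\|M_1[x:=M_2]\|_k=\|N\|_k$ by the substitution compatibility of the translation (the analogue of Lemma~\ref{ch2:sbth}, item~2, which I would state or reprove for $\|\cdot\|_k$); hence one $\b$-step suffices. If $M=(\m\a.M_1)M_2\ra_\m \m\a.M_1[\a:=_rM_2]=N$, I would split on whether $lh(A)\leq k$, where $\neg A$ is the type of $\a$ --- here the hypothesis $rank(M)\leq k$ is exactly what guarantees $lh(A)\leq k$, since $lh(A)+1 = lh(type(\m\a.M_1))=rank(M,M)\le rank(M)$... more precisely $rank((\m\a.M_1)M_2,M)=lh(type(\m\a.M_1))$, and $type(\m\a.M_1)=A$ so $lh(A)=rank(M,M)\leq k$. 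Thus $\|\m\a.M_1\|_k=(T_A\;\l\a.\|M_1\|_k)$, and $\|M\|_k=(T_A\;\l\a.\|M_1\|_k)\,\|M_2\|_k$. Now I would \emph{unfold} $T_A$ along the structure of $A=B_1\ra\cdots\ra B_r\ra X$ (using $T_{B\ra C}=\l x.\l y.(T_C\;\l z.(x\;\l t.(z\;(t\;y))))$), showing by a short computation that $(T_A\;\l\a.\|M_1\|_k)\,\|M_2\|_k$ $\b$-reduces in several steps to a term of the shape $(T_{A'}\;\l\a.P)$ where $A'$ is $A$ with one arrow stripped and $P$ is obtained from $\|M_1\|_k$ by replacing each $(\a\;Q)$ by $(\a\;(Q\;\|M_2\|_k))$ --- that is, $P=\|M_1\|_k[\a:=_r\|M_2\|_k]=\|M_1[\a:=_rM_2]\|_k$ by the $\m$-substitution compatibility (analogue of Lemma~\ref{ch2:sbth}, item~3). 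Since $(T_{A'}\;\l\a.P)=\|\m\a.M_1[\a:=_rM_2]\|_k=\|N\|_k$, and the unfolding took at least one $\b$-step, we get $\|M\|_k\fl^+_\l\|N\|_k$.

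The main obstacle I expect is precisely that unfolding computation for $T_A$ applied to an argument: one must verify that $(T_{B\ra C}\;\l\a.U)\,V$ reduces to $(T_C\;\l\a.U[\a:=_r V])$ (up to the way $\a$ is rebound), using the explicit definition of $T_{B\ra C}$ and tracking where the argument $V$ is inserted under every occurrence of $\a$. This is where the defining clause $T_{A\ra B}=\l x.\l y.(T_B\;\l z.(x\;\l t.(z\;(t\;y))))$ has to be expanded carefully: the ``$\l t.(z\;(t\;y))$'' is exactly the gadget that wraps each continuation use of $\a$ with an extra application to $y$ (which will be instantiated to $V$). Once this single algebraic fact is established, iterating it along $A$ down to its atomic tail $X$ (where $T_X=c_X$, which simply gets applied with all the accumulated arguments and produces the $\m$-binder back) and invoking the $\m$-substitution lemma for $\|\cdot\|_k$ closes the case. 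All the auxiliary substitution-compatibility identities for $\|\cdot\|_k$ are routine inductions on $comp(M)$ of exactly the kind already used for $[\![\cdot]\!]_k$ in Lemma~\ref{ch2:sbth}, so I would either cite that pattern or relegate them to a short preliminary lemma.
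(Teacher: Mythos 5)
Your proof is correct and follows essentially the same route as the paper, whose own argument is just the one-line observation that it suffices to check $\|(\m\a.M_1)M_2\|_k\fl^+_\l\|\m\a.M_1[\a:=_rM_2]\|_k$ with $lh(A)\le k$ forced by $rank(M)\le k$; you supply exactly the missing details (the context cases, the substitution-compatibility identities, and the unfolding of $T_{B\ra C}$ that wraps each $(\a\;Q)$ into $(z\;(Q\;V))$). One small correction: a single $\m$-step strips only one arrow, so only one application of your algebraic fact about $T_{B\ra C}$ is needed — there is no iteration down to the atomic tail $X$ — but your second paragraph already states the one-arrow version correctly, so nothing breaks.
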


\begin{proof}
Obviously, it is enough to check the relation $\|(\m \a. M_1)M_2\|_k\fl^+_{\l}$\\
$\|\m \a. M_1[\a :=_rM_2]\|_k$, where, necessarily,
$k\geq lh(A)$ provided $type(\a)=\neg A$.
\end{proof}

\begin{lem} \label{ch2:trestm}
Let $k\geq 0$, $M$, $N$ be $\lm$-terms such that $rank(M) \leq k$. \\
If $M\fl^nN$, then $\|M\|_k\fl^m \|N\|_k$ for some $m\geq n$.
\end{lem}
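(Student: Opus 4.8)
The plan is to prove the statement by a straightforward induction on $n$, the length of the reduction sequence $M \fl^n N$, relying on Lemma~\ref{ch2:trest} to translate a single $\lm$-reduction step into at least one $\l$-reduction step, and on Lemma~\ref{ch2:rank} to carry the rank hypothesis $rank(M)\le k$ through the whole sequence.

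The base case $n=0$ is immediate: then $N=M$ and one takes $m=0$. For the inductive step, with $n>0$, I would split the sequence as $M \ra_{\lm} M_1 \fl^{n-1} N$. Since $rank(M)\le k$ by hypothesis, Lemma~\ref{ch2:trest} applies to the first step and gives $\|M\|_k \fl^{m_1}_{\l} \|M_1\|_k$ with $m_1\ge 1$. By part~(2) of Lemma~\ref{ch2:rank}, a reduction step cannot increase the rank, so $rank(M_1)\le rank(M)\le k$; hence the induction hypothesis applies to $M_1 \fl^{n-1} N$ and yields $\|M_1\|_k \fl^{m_2}_{\l} \|N\|_k$ for some $m_2\ge n-1$. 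Concatenating the two reduction sequences produces $\|M\|_k \fl^{m}_{\l} \|N\|_k$ with $m=m_1+m_2\ge 1+(n-1)=n$, which is exactly the claim.

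There is no real obstacle here; the argument is entirely routine. The only point worth flagging is that the rank bound must hold at \emph{every} term occurring along the sequence, not merely at the initial term $M$, and this is precisely what Lemma~\ref{ch2:rank}~(2) secures, so that Lemma~\ref{ch2:trest} may legitimately be invoked at each step. I would also note that, since Lemma~\ref{ch2:trest} covers only the $\b$- and $\m$-rules, the statement genuinely concerns $\lm$-reduction sequences and cannot be extended as it stands to accommodate the $\rho$- and $\theta$-rules; this is exactly why this translation-based bound is confined to the concluding section rather than used in the main argument.
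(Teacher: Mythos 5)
Your proof is correct and is exactly the argument the paper intends: the paper's proof is the one-liner ``Follows from Lemmas \ref{ch2:rank} and \ref{ch2:trest}'', and your induction on $n$, using Lemma \ref{ch2:rank}(2) to propagate the bound $rank(\cdot)\le k$ along the sequence so that Lemma \ref{ch2:trest} can be applied at each step, is precisely how those two lemmas combine. No gaps.
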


\begin{proof}
Follows from Lemmas \ref{ch2:rank} and \ref{ch2:trest}.
\end{proof}

Since no reduction rules are added to $\l$ when defining $\l_c^*$,
the method of Xi \cite{Xi} for estimating the lengths of reduction
sequences is also applicable to $\l_c^*$ without any changes. We
state without proof the following theorem.

\begin{thm} \label{ch2:lhl*}
Let $M$ be a $\l_c^*$-term such that $rank(M)=k$. Then every
reduction sequence starting from $M$ has length less than
$\textrm{tower}(k+1, (2k+3)\cdot comp(M))$.
\et

In order to establish a bound for the lengths of $\lm$-reduction
sequences we have to estimate the size of the translated terms as
well.

\begin{lem} \label{ch2:trtypesize}
If $A$ is a type, then $comp(T_A)\geq 8\cdot lh(A)+3$.
\end{lem}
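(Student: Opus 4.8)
The plan is to argue by induction on $lh(A)$, equivalently by induction on the structure of $A$, reading off from the definition of $T_A$ how much each layer contributes to $comp$. The one computation that really matters is the arrow clause, which I claim contributes a fixed constant $8$ to the complexity, i.e. $comp(T_{B\ra C})=comp(T_C)+8$; since a single arrow is added to the right-hand spine of the type at the same time, this increment of $8$ is exactly matched against the coefficient $8$ of $lh$ in the statement, and the whole estimate falls out.

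\emph{Base cases ($lh(A)=0$).} Here $A$ is $\bot$ or atomic. For $A=\bot$ we have $T_\bot=\l y.(y\;I)$ with $I=\l x.x$, so $comp(I)=2$, $comp(y\;I)=3$, and therefore $comp(T_\bot)=4\geq 8\cdot 0+3$. For atomic $A$ we have $T_A=c_A$ and the required inequality $comp(T_A)\geq 3$ is immediate from the size of a constant of the $\l_c^*$-calculus. Hence the claim holds whenever $lh(A)=0$.

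\emph{Inductive step.} Let $A=B\ra C$ and suppose $comp(T_C)\geq 8\cdot lh(C)+3$. From
\[ T_{B\ra C}=\l x.\l y.(T_C\;\l z.(x\;\l t.(z\;(t\;y)))) \]
we compute $comp$ from the inside out: $comp(t\;y)=2$, $comp(z\;(t\;y))=3$, $comp(\l t.(z\;(t\;y)))=4$, $comp(x\;\l t.(z\;(t\;y)))=5$, and $comp(\l z.(x\;\l t.(z\;(t\;y))))=6$, so the subterm applied to $T_C$ has complexity $6$; adding the two outer abstractions $\l x,\l y$ gives $comp(T_{B\ra C})=comp(T_C)+8$ (the eight new symbols being the binders $\l x,\l y,\l z,\l t$ and the four occurrences $x,y,z,t$). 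Since adding the arrow $B\ra(\,\cdot\,)$ raises the spine length by one, $lh(B\ra C)=lh(C)+1$, and so by the induction hypothesis $comp(T_{B\ra C})=comp(T_C)+8\geq (8\cdot lh(C)+3)+8=8\cdot lh(B\ra C)+3$, as wanted. The argument is entirely a matter of counting symbols, so there is no genuine obstacle; the only point needing attention is the inductive step, where one must check that the arrow clause adds precisely eight symbols and that $T_A$ recurses on the codomain alone, so that the induction is really an induction along the right-hand spine of the type.
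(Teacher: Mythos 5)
Your symbol count for the arrow clause is correct: with the paper's definition of $comp$ one gets $comp(T_{B\ra C})=comp(T_C)+8$, and $comp(T_\bot)=4$ is also right. The gap is in how you convert this into the stated inequality. The paper defines $lh(A)$ as the \emph{total} number of arrows of $A$, so $lh(B\ra C)=lh(B)+lh(C)+1$, not $lh(C)+1$ as you assert; the two agree only when $B$ is arrow-free. Since $T_{B\ra C}$ recurses on $C$ alone and gains only $8$ symbols per unfolding, $comp(T_A)$ grows like $8$ times the length of the right-hand spine of $A$, which can be far smaller than $8\cdot lh(A)$. Concretely, for $A=(X\ra X)\ra X$ one has $lh(A)=2$ but $comp(T_A)=comp(c_X)+8$; the paper never defines the complexity of a constant, and under the natural convention $comp(c_X)=1$ this is $9<8\cdot 2+3=19$. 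So your inductive step does not close, and the lemma as printed (with $\geq$) is in fact false for such types. Your atomic base case has the same defect in miniature: the assertion $comp(c_X)\geq 3$ is not backed by anything in the paper.

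What is actually needed downstream, and what your computation essentially does establish, is the reverse inequality: Lemma \ref{ch2:trsize} invokes this lemma to bound $comp(\|\m \a. M_1\|_k)=comp(T_A)+1+comp(\|M_1\|_k)$ from \emph{above} by $(8k+4)+comp(\|M_1\|_k)$, which requires $comp(T_A)\leq 8\cdot lh(A)+3$ (using $k\geq lh(A)$); the $\geq$ in the statement should evidently be a $\leq$. Your identity $comp(T_{B\ra C})=comp(T_C)+8$ combined with the correct inequality $lh(B\ra C)\geq lh(C)+1$ yields that upper bound by the very same induction, apart from the residual $\bot$-target case, where $comp(T_\bot)=4$ overshoots $3$ by one and would force the constant up to $8\cdot lh(A)+4$. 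So: rerun your induction for the upper bound, replace the false equality $lh(B\ra C)=lh(C)+1$ by the inequality $lh(B\ra C)\geq lh(C)+1$, and do not lean on an unstated value of $comp(c_X)$.
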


\begin{proof}
Obvious.
\end{proof}

\begin{lem} \label{ch2:trsize}
If $M$ is a $\lm$-term such that $rank(M)=k$, then
$comp(\|M\|_k)\leq (8k+4)\cdot comp(M)$.
\end{lem}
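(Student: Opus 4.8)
The plan is to argue by induction on $comp(M)$, having first replaced the hypothesis $rank(M)=k$ by the slightly more convenient $rank(M)\le k$; the lemma is then the special case $rank(M)=k$. This weakening of the hypothesis costs nothing and is exactly what makes the recursion run: if $M'$ is an immediate subterm of $M$ then every redex of $M'$ is a redex of $M$, while $M$ itself is at most one further redex, so $rank(M')\le rank(M)\le k$ and the induction hypothesis is available on $M'$ for the same parameter $k$. I would then check the statement clause by clause against the definition of $\|\cdot\|_k$.

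All but one of the clauses are routine. For a variable $M$, $comp(\|M\|_k)=1\le (8k+4)\,comp(M)$. For $M=\l x.M_1$ one has $\|M\|_k=\l x.\|M_1\|_k$, hence $comp(\|M\|_k)=comp(\|M_1\|_k)+1\le (8k+4)\,comp(M_1)+1\le (8k+4)\bigl(comp(M_1)+1\bigr)=(8k+4)\,comp(M)$, using $8k+4\ge 1$. The application case $M=(M_1\,M_2)$ follows by adding the two estimates, and the clauses $M=[\a]M_1$ and $M=\m\a.M_1$ with $lh(A)>k$ --- where $\|M\|_k$ is $(\a\,\|M_1\|_k)$, respectively $(z\,\|M_1\|_k)$ for the fresh variable $z$ --- are treated exactly like $\l x.M_1$, since the translation inserts only a single new symbol in front of $\|M_1\|_k$ while $comp(M)=comp(M_1)+1$.

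The one case that is actually tight, and therefore the heart of the estimate, is $M=\m\a.M_1$ with $type(\a)=\neg A$ and $lh(A)\le k$, where $\|M\|_k=(T_A\;\l\a.\|M_1\|_k)$ and hence $comp(\|M\|_k)=comp(T_A)+comp(\|M_1\|_k)+1$. Here I would invoke Lemma \ref{ch2:trtypesize} to bound $comp(T_A)\le 8\cdot lh(A)+3\le 8k+3$, together with the induction hypothesis $comp(\|M_1\|_k)\le (8k+4)\,comp(M_1)$, so that $comp(\|M\|_k)\le (8k+3)+(8k+4)\,comp(M_1)+1=(8k+4)\bigl(comp(M_1)+1\bigr)=(8k+4)\,comp(M)$. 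In other words the single unit of complexity charged for the $\m$-prefix of $M$, once multiplied by the coefficient $8k+4$, precisely pays for the inserted term $T_A$ plus the surrounding application and abstraction.

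The only real difficulty is the bookkeeping in this last case: the coefficient $8k+4$ is forced by the linear growth $comp(T_A)\le 8\cdot lh(A)+3$ of the terms $T_A$ plus the overhead of the freshly created redex, and the induction must be carried out on $comp(M)$ with $k$ held fixed --- an induction on $rank$ would not expose the right inductive structure. Once the constants are lined up as above, the remaining clauses carry pure slack and go through mechanically.
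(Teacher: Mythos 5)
Your proof is correct and follows essentially the same route as the paper's: induction on $comp(M)$, with the insertion of $T_A$ in the $\m$-abstraction case as the only nontrivial step, bounded via Lemma \ref{ch2:trtypesize} exactly as you do. (One remark: the paper states that auxiliary lemma with the inequality reversed, $comp(T_A)\geq 8\cdot lh(A)+3$ --- evidently a typo, since the upper-bound direction $comp(T_A)\leq 8\cdot lh(A)+3$ that you invoke is the one actually needed and the one the paper's own proof of the present lemma uses.)
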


\begin{proof}
By induction on $comp(M)$. We only check one of the cases. Let
$M=(\m \a. M_1)M_2$. Assume $type(\a)=\neg A$. Then, since $k\geq
lh(A)$, we have, by Lemma \ref{ch2:trtypesize} and the induction
hypothesis, $comp(\|M\|_k) = comp(\|\m \a. M_1\|_k)\|M_2\|_k$\\
$ = comp((T_A\;\l \a.\|M_1\|_k))+comp(\|M_2\|_k)
 \leq (8k+4)+comp(\|M_1\|_k)+comp(\|M_2\|_k)
 \leq (8k+4)\cdot comp(M)$.
\end{proof}

\begin{thm}
Let $M$ be a $\lm$-term such that $rank(M)=k$. Then every reduction
sequence starting from $M$ has length less than\\
$\textrm{tower}(k+1, (2k+3)\cdot (8k+4)\cdot comp(M))$.
\end{thm}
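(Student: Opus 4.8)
The plan is to route the estimate through the translation $\|\cdot\|_k$ of the $\lm$-calculus into the $\l_c^*$-calculus and then to invoke Theorem~\ref{ch2:lhl*}, i.e.\ Xi's bound transported to a $\l$-calculus enriched with the constants $c_X$. First I would note that, for $k=rank(M)$, the term $\|M\|_k$ is a legitimate $\l_c^*$-term: by Lemma~\ref{ch2:trtypepres} it is typable once the declarations of the $c_X$ are added, so Theorem~\ref{ch2:lhl*} applies to it. The essential simulation property is Lemma~\ref{ch2:trestm}: since $rank(M)\le k$, every reduction $M\fl^n N$ is mirrored by a $\l_c^*$-reduction $\|M\|_k\fl^m\|N\|_k$ with $m\ge n$. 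Hence a reduction sequence starting from $M$ of length $n$ gives one from $\|M\|_k$ of length $m\ge n$, so any bound on the lengths of reduction sequences of $\|M\|_k$ is also a bound for $M$, and it suffices to bound the former.

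To apply Theorem~\ref{ch2:lhl*} to $\|M\|_k$ I need its rank and its complexity. The complexity is provided by Lemma~\ref{ch2:trsize}, namely $comp(\|M\|_k)\le(8k+4)\cdot comp(M)$, which rests in turn on the size estimate for the auxiliary terms $T_A$ (Lemma~\ref{ch2:trtypesize}). For the rank I would prove, by induction on $comp(M)$ and in the spirit of Lemma~\ref{ch2:rankeq}, that $rank(\|M\|_k)=k$. The homomorphic cases are routine: a $\b$-redex of $M$ becomes a $\b$-redex of the same rank, and a $\m$-abstraction becomes either an application headed by the closed term $T_A$ or an application $(z\;\|M\|_k)$ with $z$ fresh, according as $lh(A)\le k$ or not. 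The delicate point is that, when $lh(A)\le k$, the term $T_A$ itself --- every redex occurring inside it, together with the redex created when $T_A$ is applied to its argument --- has rank at most $k$; checking this uses both the hypothesis $lh(A)\le k$ built into the definition of $\|\cdot\|_k$ and the precise recursive shape of $T_A$. I expect this rank bookkeeping to be the main obstacle, exactly as the corresponding rank lemma is the subtle ingredient on the $[\![M]\!]_k$ side of the paper.

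With these two facts in hand the theorem is immediate. Theorem~\ref{ch2:lhl*}, applied to the $\l_c^*$-term $\|M\|_k$ of rank $k$, says that every reduction sequence from $\|M\|_k$ has length less than $\tw(k+1,(2k+3)\cdot comp(\|M\|_k))$; since $\tw$ is increasing in its second argument, Lemma~\ref{ch2:trsize} upgrades this to $\tw(k+1,(2k+3)\cdot(8k+4)\cdot comp(M))$. Combining with the simulation of the first paragraph, every reduction sequence starting from $M$ has length less than $\tw(k+1,(2k+3)\cdot(8k+4)\cdot comp(M))$, which is the assertion of the theorem.
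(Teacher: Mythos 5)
Your proposal follows exactly the route the paper takes: the paper's own proof of this theorem is the one-line ``follows from Theorem~\ref{ch2:lhl*} and Lemma~\ref{ch2:trsize}'', with Lemma~\ref{ch2:trestm} supplying the simulation ($M\fl^n N$ gives $\|M\|_k\fl^m\|N\|_k$ with $m\geq n$). You also correctly isolate the one ingredient this leaves implicit, namely that Theorem~\ref{ch2:lhl*} has to be applied to $\|M\|_k$ \emph{at rank $k$}, so that some control of $rank(\|M\|_k)$ is required.

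However, the rank lemma you propose, $rank(\|M\|_k)=k$, is false, and the ``delicate point'' you flag cannot be checked in the way you describe. For a non-atomic $A$ the term $T_A$ is a $\l$-abstraction of type $\neg\neg A\ra A$, so the application $(T_A\;\l \a.\|M_1\|_k)$ introduced by the clause for $\m\a. M_1$ is a $\b$-redex of rank $lh(\neg\neg A\ra A)=2\cdot lh(A)+3$; likewise the applications $(T_B\;\l z.\ldots)$ nested inside $T_{A\ra B}$ are redexes of rank $2\cdot lh(B)+3$. Since the translation only guarantees $lh(A)\leq k$, these ranks can reach $2k+3$, which strictly exceeds $k$ as soon as $A$ is non-atomic (already $lh(A)=1$ yields rank $5$). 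What the induction actually gives is only $rank(\|M\|_k)\leq 2k+3$, and feeding that into Theorem~\ref{ch2:lhl*} produces a tower of height $2k+4$ rather than $k+1$, i.e.\ a strictly weaker bound than the one claimed. To be fair, the paper's own two-line proof is silent on this point and is exposed to the same difficulty; but as written your argument does not establish the stated bound, and some further idea is needed --- either an argument that the redexes contributed by the $T_A$'s can be discounted in Xi-style bookkeeping, or a restatement of the theorem with the larger tower height.
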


\begin{proof}
Follows from Theorem \ref{ch2:lhl*} and Lemma \ref{ch2:trsize}.
\end{proof}

This method, however, is not applicable to the $\lmrt$-calculus,
since, in the cases of the $\rho$- and $\theta$-reductions, Lemma \ref{ch2:trest}
is not valid.

\section{Future work}

In this paper, we have shown how to find a bound for the lengths of the reduction sequences in the simply typed $\l\m$-calculus extended
with the rules $\rho$ and $\theta$.
The bound depends only on the size of the term and on the maximum of the ranks of its redexes.
We first gave a bound concerning $\l\m$I-terms, then we established a correspondence between $\lm$-terms
and $\l\m$I-terms such that the lengths of the longest reduction sequences do not decrease. To obtain a bound for the
$\lm$I-calculus we defined the notion of a standard $\bmrt$-reduction sequence, the formulation of which was not entirely
straightforward because of the presence of
overlapping redexes. Surprisingly, we have obtained that, with the necessary changes, the same bound is appropriate for
the $\lm$-calculus as the one found by Xi \cite{Xi} for the $\l$-calculus \cite{Xi}. This leads us to the conjecture that
the computational complexity of the $\l$-calculus is not enhanced by the introduction of the classical variables and the new rules.
Our future work can be the following.

\begin{enumerate}
\item {\bf Finding a term realizing the bound.} In the literature usually different upper bounds can be found for the lengths of the
reduction sequences in the $\l$-calculus. A question naturally arises, which bounds could be the most precise ones? Could we amend the
present bounds considerably?

\item {\bf Commutation lemmas for the $\rho$- and $\theta$-rules.} If we considered only the $\b$- and $\mu$-rules,
our proof would simplify considerably, especially the arguments concerning standardization. However, the question arises whether,
in the cases of the $\rho$- and the $\theta$-rules, we are able to prove commutation lemmas together with maintaining an upper bound
for the lengths of the reductions. It would be good to see whether this approach simplifies the presentation of our results or not.

\item{\bf Other rules for the $\l\m$-calculus.} The $\l\m$-calculus can be considered with other kinds of reductions.
For example, one can prohibit two consecutive $\m$-abstractions ($\m\a.\m\b. M$) or $\m$-variable
applications ($[\a][\b]M$) (see Nour \cite{Nou1}). Parigot has also proposed a rule which prohibits that a
$\l$-abstraction should immediately follow a $\m$-variable (Py \cite{Py}). Moreover, in Saurin's paper \cite{Sau}, there are some additional rules:
Saurin proves a standardization theorem with respect to his calculus. Another rule is also worth considering:
$(N \: \m \a. M)\ra_{{\mu}'} \m \a. M[\a :{=}_{l}N]$, where $M[\a :{=}_{l}N]$ is obtained from $M$ by
replacing every subterm in $M$ of the form $[\a]U$ by $[\a](N \, U)$. This rule is the symmetric
counterpart of the $\m$-rule, the addition of which makes the $\l\m$-calculus non-confluent,
but the strong normalization still holds \cite{Dav-Nou3,Dav-Nou4}. A standardization result in relation to the $\lmp$-calculus
is obtained by David and Nour in \cite{Dav-Nou2}. Concerning our results, we think that the same bound could also be obtained for the $\lmp$-calculus.
Presumably, the proof would be a little more involved than the one presented in this article, however, we do not intend to elaborate it.

\item {\bf Other classical calculi.} It would be interesting to find an upper bound for the lengths of the reduction sequences
in other classical calculi \cite{Ber-Bar,Cur-Her}. The question naturally arises whether the methods presented in this paper are applicable to them.

\end{enumerate}

\bigskip

\section*{Acknowledgment}
\noindent  We wish to thank Ren\'e David for helpful discussions.

\end{document}